\documentclass[a4paper,11pt]{amsart}
\usepackage[english]{babel}
\usepackage[utf8]{inputenc}
\usepackage[T1]{fontenc}
\usepackage{csquotes}
\usepackage[style=numeric,
	useprefix,%
	giveninits=true,%
	hyperref,%
	doi=false,%
	url=false,%
	isbn=false,%
	backend=bibtex,%
	maxbibnames=99%
	]{biblatex}
\bibliography{./BIB-FlowsSobolev}


\textheight 8.5in
\textwidth 6.2in
\hoffset -0.3in

\usepackage{amssymb}
\usepackage{mathrsfs}
\usepackage{thmtools}
\usepackage[usenames,dvipsnames]{xcolor}
\usepackage{enumitem}	
\usepackage{hyperref}
\hypersetup{colorlinks,%
citecolor=Black,%
filecolor=Black,%
linkcolor=Black,%
urlcolor=Black}
\allowdisplaybreaks 
\newcommand{\scr}[1]{\mathscr{#1}}

\newcommand{\bb}[1]{\mathbb{#1}}

\newcommand{\N}{\mathbb{N}}	
\newcommand{\R}{\mathbb{R}}	
\newcommand{\dd}{\,\mathrm{d}}	
\newcommand{\de}{\partial}		
\renewcommand{\div}{{\rm div}}	

\newcommand{\trace}{\mathrm{trace}}
\newcommand{\loc}{\mathrm{loc}}
\newcommand{\spt}{\mathtt{spt}}

\newcommand{\Lbm}{\mathcal{L}} 
\newcommand{\Lbs}{\mathrm{L}} 
\newcommand{\Dom}{\scr D}
\newcommand{\FEL}{\scr E}
\newcommand{\res}{\mathop{\hbox{\vrule height 7pt width .5pt depth 0pt
\vrule height .5pt width 6pt depth 0pt}}\nolimits}

\def\Xint#1{\mathchoice
      {\XXint\displaystyle\textstyle{#1}}%
      {\XXint\textstyle\scriptstyle{#1}}%
      {\XXint\scriptstyle\scriptscriptstyle{#1}}%
      {\XXint\scriptscriptstyle\scriptscriptstyle{#1}}%
      \!\int}
   \def\XXint#1#2#3{{\setbox0=\hbox{$#1{#2#3}{\int}$}
        \vcenter{\hbox{$#2#3$}}\kern-.5\wd0}}
\newcommand{\ave}{\Xint-}

\theoremstyle{plain}
\newtheorem{proposition}{Proposition}[section]
\newtheorem{theorem}[proposition]{Theorem}
\newtheorem{lemma}[proposition]{Lemma}
\newtheorem{corollary}[proposition]{Corollary}
\newtheorem{thm}{Theorem}[section]

\newtheorem*{cor*}{Corollary}

\theoremstyle{definition}
\newtheorem{definition}[proposition]{Definition}
\declaretheorem[name=Remark,sibling=proposition,qed={\raisebox{+0.5ex}{\hbox{\small$\blacklozenge$}}}]{remark}


\title[Flows of Sobolev vector fields]{Classical flows of vector fields with exponential or sub-exponential summability}

\date{\today}

\author[Ambrosio]{Luigi Ambrosio}
\address[Ambrosio]{Scuola Normale Superiore, Pisa, Italy.}
\email{luigi.ambrosio@sns.it}

\author[Nicolussi Golo]{Sebastiano Nicolussi Golo}
\address[Nicolussi Golo]{Department of Mathematics and Statistics, University of Jyväskylä, Finland}	
\email{sebastiano.s.nicolussi-golo@jyu.fi}

\author[Serra Cassano]{Francesco Serra Cassano}
\address[Serra Cassano]{Dipartimento di Matematica, Università di Trento, Italy}
\email{francesco.serracassano@unitn.it}

\thanks{Corresponding author: S.~Nicolussi Golo.}

\keywords{Vector Fields, Flow, Sobolev--Orlicz Spaces, Transport Equation, Continuity Equation}

\subjclass[2010]{%
35F10  
35A01  
35A02  
}

\begin{document}
\begin{abstract}
We show that vector fields $b$ whose spatial derivative $D_xb$ satisfies a Orlicz summability condition 
 have a spatially continuous representative and are well-posed.  For the  case of sub-exponential summability, their flows satisfy a Lusin (N) condition in a quantitative form, too.
Furthermore, we prove that if 
$D_xb$ satisfies a suitable exponential summability condition 
then the flow associated to $b$ has Sobolev regularity, without assuming boundedness of $\div_xb$.
We then apply these results to the representation and Sobolev regularity of 
weak solutions of the Cauchy problem for the transport and continuity equations.
\end{abstract}
\maketitle

\setcounter{tocdepth}{3}
\phantomsection
\addcontentsline{toc}{section}{Contents}
\tableofcontents


\section{Introduction}
In this paper we are concerned with the study of the existence and uniqueness of classical solutions of the Cauchy problem for
the ODE system
\begin{equation}\label{eq03161620}  
\left\{
\begin{array}{r@{\,=\,}l}
\dot{\gamma}(t)& b(t,\gamma(t))\\
\gamma(s)& x\,,
\end{array}
\right.
\end{equation} 
with $x\in\Omega$, an open domain in $\R^n$, $s\in I$, an open interval in $\R$, and $b:\,I\times\Omega\to\R^n$  a  
 continuous,  possibly non-autonomous vector field 
Even though we will mostly deal with the case when $b$ is continuous,
we will point out which proofs can easily be adapted to the case when $b$ is only measurable with respect to $t$.\
If solutions to~\eqref{eq03161620} exist and are unique for every $s$ and $x$, we say that the vector field $b$ is \emph{well-posed} in $I\times\Omega$ (or in $\overline I\times\Omega$, see Definition~\ref{WP} 
for a more precise statement). For every well posed vector field $b:\,I\times\Omega\to\R^n$ we have a \emph{flow}, that is, a map
$X:I\times I\times\Omega\to\Omega$, defined as  $X(t,s,x):=\gamma(t)$ where $\gamma$ is the unique absolutely continuous solution of~\eqref{eq03161620}.
More precisely, for each $t,\,s\in I$ we denote by $\Omega_{(t,s)}\subset\Omega$ the open set of all $x\in\Omega$ such that the path starting at $x$ at time $s$ can be extended until time $t$ (see  Section \ref{sec5ed296d6} and Remark~\ref{rem62a45ceb}).
Then $X(t,s,\cdot)$ is a well defined homeomorphism $\Omega_{(t,s)}\to\Omega_{(s,t)}$ (see Remark~\ref{rem62a45ceb}).

Di~Perna--Lions~\cite{MR1022305}  carried out a pioneering and far-reaching theory by introducing a generalized notion of flow for vector fields $b\in L^1_\loc((0,T) ;W^{1,1}_\loc(\R^n ,\R^n))$ with important applications to the existence and uniqueness of weak solutions for the Cauchy problem of  the \emph{transport  equation} associated to a weakly differentiable vector field $b$, that is, 
\begin{equation}\label{CPTE}
\begin{cases}
\partial_t u+b\cdot D_x u= 0 &\text{ in $(0,T)\times\R^n$ }\\
u(0,\cdot)=\bar u\,.
\end{cases}
\end{equation}
The theory was later remarkably extended by the first author~\cite{MR2096794} to vector fields $b(t,\cdot)$ with $BV$ regularity. 
In these works, the regularity of $b$ is paired with the boundedness of its spatial divergence, that is
\begin{equation}\label{divbbd}
\div_x b \in L^1((0,T) ; L^\infty(\R^n )) ,
\end{equation}
which ensures the existence and uniqueness of the generalized flow of $b$.
If~\eqref{divbbd} does not hold, then uniqueness of the flow may fail, 
as it was already shown in \cite[Section IV.1]{MR1022305}. 
The existence and uniqueness of a  generalized flow associated to a weakly regular vector field $b$ has been the object of an intensive study with applications  to the Cauchy problem for the transport equation as well as for the \emph{continuity equation} associated to $b$, that is,
\begin{equation}\label{CPCE}
\begin{cases}
\partial_t\rho+\div_x(b\rho) = 0 & \text{ in $(0,T)\times\R^n$ }\\
\rho(0,\cdot) = \bar\rho &\text{ in $\R^n$\ . }
\end{cases}
\end{equation}
Existence, uniqueness and regularity of solutions of these three problems~\eqref{eq03161620},~\eqref{CPTE} and~\eqref{CPCE} are connected with each other.
In particular, the existence of a unique flow $X$ with enough regularity implies existence and uniqueness of solutions to both the transport equation and the continuity equation. A fairly complete account of the development  in this topic can be found in \cite{MR3283066} and references therein.
A sample of the literature on this subject is \cite{MR3375545,MR4263701,MR2181034,MR3778580,MR3912727,MR3494396,MR3458193,MR3906270,MR3479536,MR1421208,MR4061966,MR2679007,MR2044334}.

\bigskip

Our contribution focuses on two problems.
First, 
we want to weaken the boundedness assumption
on the divergence~\eqref{divbbd}.
We will show in Theorem~\ref{thm5e999915} that sub-exponential summability of $\|D_xb\|$ guarantees the existence of a unique \emph{classical} flow (in the Di Perna--Lions--Ambrosio theory, flows have a weaker definition).

Second, we want to find conditions on $b$ for the flow to have Sobolev regularity, 
instead of just $L^p$ integrability.
It is well-known that  high $L^p$ integrability of matrix Jacobian $D_xb$, even coupled with~\eqref{divbbd}, is not enough in order to provide Sobolev regularity of the flow $X$ (see, for instance, \cite{MR3437603}).
 A strategy used in the recent papers \cite{MR3912727,MR4263701} was to strengthen the hypotheses by requiring  exponential summability of $\|D_xb\|$.
We refer in particular to the recent paper \cite{MR4263701}, where it has been shown that that $b$ has a unique  flow with Sobolev regularity under the condition
\begin{equation}\label{eq5edfe619}
\sup_{t\in\R} \int_{\bb T^n} \exp(\beta \|D_xb(t,x)\|) \dd x < \infty
\quad\text{and}\quad
\div_xb\in\Lbs^\infty_\loc(\R\times\bb T^n) ,
\end{equation}
for some $\beta>0$, where $\bb T^n$ is the $n$-dimensional torus. 
We prove analogous results without conditions on the divergence of $b$ in Theorem~\ref{thm5e9a10ec}, see also Remark~\ref{rem:torus}.

\bigskip

Our results are of three types.
We first provide integral conditions of sub-exponential type on $Db$ that ensure well-posedness.
Then, we study the Sobolev regularity of the homeomorphisms $X(t,s,\cdot)$.
Finally, we apply these results to both the transport equation and the continuity equation.

\subsection{Well-posedness}
Let us focus, first, on the well-posedness. If $\left\|D_xb\right\|$ satisfies an \emph{exponential summability},
that is, conditions of the form
\begin{equation}\label{eq5eb965b3}
\int_I\int_\Omega\exp\left(\beta \|D_xb(t,x)\| \right)\dd x \dd t<+\infty
\end{equation} 
for some $\beta>\,0$, then it is well-known that $b$ is well-posed.
Indeed, in this case, the vector field $b(t,\cdot)$ satisfies a so-called Log-Lipschitz condition;
see, for instance, \cite{MR4263701,MR1920427}.
However, reformulating the condition of exponential summability 
in a Orlicz-like form, we extend the result to some sub-exponential cases.

\begin{thm}\label{thm5e999915}
Let $\Theta:[0,+\infty)\to(0,+\infty)$ be a non decreasing locally Lipschitz function.
Assume that
\begin{enumerate}[label=(\ref{thm5e999915}.\Roman*)]
\item\label{convTheta}  
if $n>\,1$, there exists $\alpha\in(1,\frac{n}{n-1})$ such that $\Theta^{\frac{\alpha-1}{\alpha}}$ is convex, 
while, if $n=1$, $\Theta$ is convex;
\item\label{item04021111}
there exists $C_\Theta\ge1$ such that $\Theta:\,[C_\Theta,+\infty)\to [\Theta(C_\theta),+\infty)$ is bijective and
\begin{equation}\label{condBI}
\Theta(s_1)\Theta(s_2) \le \Theta(C_\Theta\, s_1\,s_2)\text{ for all }s_1,s_2\ge C_\Theta\,;
\end{equation}
\item\label{item04021113}
$\displaystyle{\int_1^\infty\frac{\Theta'(s)}{s\Theta(s)} \dd s = +\infty}$.
\end{enumerate}
Let $b\in \Lbs^1_\loc(I;W^{1,1}_\loc(\Omega;\R^n))$ and assume that
for every $o\in\Omega$ there exist $c>0$, $R>0$  such that $B(o,2R)\subset\Omega$ and
the function
\begin{equation}\label{eq5ea991ab}
t\mapsto \psi(t):=\,\int_{B(o,2R)} \Theta(c\|D_x b(t,z)\|) \dd z
\end{equation}
belongs to $\Lbs^1_\loc(I)$. 
Then $b(t,\cdot):\Omega\to\R^n$ has a continuous representative
$\tilde{b}(t,\cdot)$ for a.e.~$t\in I$ and $\tilde{b}$ is well-posed in $I\times\Omega$.
Moreover, if  there exists $m\in L^1(I)$ such that
\begin{equation}\label{estimbuniftA}
|b(t,x)|\le\,m(t)\quad\text{ for a.e.~}t\in I,  \text{ for a.e.~}x\in B(o,R)\,,
\end{equation}
then $\tilde{b}$ is also well-posed in $\overline{I}\times\Omega$.
\end{thm}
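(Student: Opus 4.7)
My plan is to reduce the theorem to the classical Peano--Osgood existence-and-uniqueness result for ODEs, by extracting from the Orlicz hypothesis on $D_xb$ a quantitative modulus of continuity $\omega$ for $b(t,\cdot)$ satisfying Osgood's integrability condition $\int_0 dr/\omega(r)=+\infty$. The continuous representative $\tilde{b}$ falls out of this modulus, and Peano--Carath\'eodory (existence) together with an Osgood-type uniqueness lemma (in its Carath\'eodory form, uniform in $t$ up to an $L^1_\loc$ factor) then yields the flow.

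The first and hardest step is a Sobolev--Orlicz embedding giving, for a.e.\ $t\in I$, an estimate of the form
\[
|b(t,x)-b(t,y)|\le F(t)\,|x-y|\,\Theta^{-1}\!\bigl(K/|x-y|^n\bigr),\qquad x,y\in B(o,R),
\]
with $F\in L^1_\loc(I)$ built from $\psi$ and $K=K(c,R)$. Starting from the pointwise Riesz-potential representation of $b(t,x)-b(t,y)$ in terms of $|D_xb(t,\cdot)|$, I would apply the Orlicz version of H\"older's inequality on the small ball of radius $\simeq|x-y|$ to pair $|D_xb|$ (in the Orlicz class determined by $\Theta$) against the Riesz kernel $|z|^{-(n-1)}$. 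The hypothesis \ref{convTheta} that $\Theta^{(\alpha-1)/\alpha}$ is convex for some $\alpha\in(1,n/(n-1))$ is precisely what makes this pairing legitimate: reading $\alpha$ as the H\"older exponent to be used against $|z|^{-(n-1)}$, the condition $\alpha<n/(n-1)$ matches the local integrability of the Riesz kernel, while the convexity of $\Theta^{(\alpha-1)/\alpha}$ turns the remaining factor into an admissible Young function. The multiplicativity condition \eqref{condBI} of \ref{item04021111} is then used to rearrange the resulting Orlicz norms into the required shape $\Theta^{-1}(K/|x-y|^n)$, isolating the $t$-dependent factor $F(t)$ depending monotonically on $\psi(t)$.

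The second step is the algebraic check that $\omega(r)=r\,\Theta^{-1}(K/r^n)$ satisfies Osgood's condition: with the substitution $s=\Theta^{-1}(K/r^n)$, so $\Theta(s)=K/r^n$ and $-n\,dr/r=\Theta'(s)/\Theta(s)\,ds$, one computes
\[
\int_0^{\delta}\frac{dr}{r\,\Theta^{-1}(K/r^n)}=\frac{1}{n}\int_{s_0}^{+\infty}\frac{\Theta'(s)}{s\,\Theta(s)}\dd s,
\]
which is $+\infty$ by \ref{item04021113}. Peano--Carath\'eodory applied slicewise in $t$ then gives local solutions on each ball $B(o,R)$, while the Osgood lemma in its time-dependent form (with comparison function $F(t)\,\omega(\cdot)$ and $F\in L^1_\loc(I)$) gives uniqueness; covering $\Omega$ by such balls and patching maximal solutions delivers well-posedness in $I\times\Omega$ as formulated in Section~\ref{sec5ed296d6} and Remark~\ref{rem62a45ceb}.

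For the final assertion, the uniform $L^1$-in-$t$ bound \eqref{estimbuniftA} gives the a priori estimate $|\gamma(t)-x|\le\int_s^t m(\tau)\dd\tau$ as long as $\gamma$ remains in $B(o,R)$, preventing solutions starting in a smaller concentric ball from escaping $B(o,2R)$ over all of $\overline I$; this extends well-posedness up to the closed time interval. The main obstacle in this program is the Sobolev--Orlicz step above: one must verify that conditions \ref{convTheta}--\ref{item04021111} are the precise structural hypotheses on $\Theta$ needed to run the Orlicz-H\"older pairing and to normalize the resulting inverse-Young function into the clean form $\Theta^{-1}(K/r^n)$. Once that estimate is in hand, the Osgood computation and the classical ODE theory do the rest.
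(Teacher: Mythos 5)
Your overall route is the same as the paper's (Riesz potential bound for $|b(t,x)-b(t,y)|$, an Orlicz-type estimate converting the hypothesis on $\Theta(c\|D_xb\|)$ into a modulus of continuity of the form $r\,\Theta^{-1}(\text{power of }1/r)$, the substitution $s=\Theta^{-1}(\cdot)$ to verify Osgood via~\ref{item04021113}, and then Carath\'eodory--Osgood well-posedness plus the $m\in L^1$ bound for $\overline I$). But the central analytic step is only announced, not proved, and you yourself flag it as the main obstacle: you claim the estimate $|b(t,x)-b(t,y)|\le F(t)\,|x-y|\,\Theta^{-1}(K/|x-y|^n)$ via an Orlicz--H\"older pairing of $\|D_xb\|$ against the Riesz kernel in the complementary Young class. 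That route requires estimating the Luxemburg norm of $|z|^{-(n-1)}$ on $B(0,r)$ with respect to the conjugate of $\Theta$ and then normalizing the product of the two Orlicz norms into exactly $\Theta^{-1}(K/r^n)$; it is not clear that hypotheses~\ref{convTheta} and~\ref{item04021111} suffice for this in the sharp form you state, since~\ref{convTheta} only gives convexity of $\Theta^{(\alpha-1)/\alpha}$ for some $\alpha$ strictly below $n/(n-1)$. The paper avoids duality entirely: after the averaged Riesz bound (Lemma~\ref{lem03211522}) it applies Jensen's inequality \eqref{eq03202019} to the convex function $\Phi=\Theta(\cdot/\kappa_n)^{(\alpha-1)/\alpha}$ with respect to the normalized kernel measure, then an ordinary H\"older inequality with exponent $\alpha$ against $|z|^{-\alpha(n-1)}$ (integrable precisely because $\alpha<n/(n-1)$), and finally inverts using the submultiplicativity \eqref{condBI} in the form $\Theta^{-1}(s_1s_2)\le C_\Theta\Theta^{-1}(s_1)\Theta^{-1}(s_2)$. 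This yields the weaker exponent $\alpha/(\alpha-1)>n$ in the modulus \eqref{modcontomega}, which is all that is needed: the Osgood substitution still produces a constant multiple of $\int\Theta'(s)/(s\Theta(s))\,\dd s$. The $t$-dependent factor \eqref{functmodcontomega} is in $L^1_\loc$ because $\Theta^{-1}$ is concave and $\psi\in L^1_\loc$ — a point you should state, since it is what makes the Osgood comparison usable. One also has to check $\omega(\delta)\to0$ as $\delta\to0$, which is not automatic and is proved in the paper via a monotonicity argument for small $\delta$.

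A second, smaller gap: invoking Peano--Carath\'eodory existence ``slicewise in $t$'' requires a local bound $|b(t,x)|\le m_{\rm loc}(t)$ with $m_{\rm loc}\in L^1_\loc(I)$, which the hypotheses of the first part of the theorem do not give directly (only $b\in \Lbs^1_\loc(I;W^{1,1}_\loc)$ and \eqref{eq5ea991ab}). The paper fills this by combining the modulus estimate with a point $x_0\in B(o,R)$ at which $|\tilde b(\cdot,x_0)|\in L^1_\loc(I)$, obtaining $|\tilde b(t,x)|\le\omega(2R)\varphi(t)+|\tilde b(t,x_0)|$, and only then applies Proposition~\ref{prop628f93c2}. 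Your final step for well-posedness in $\overline I\times\Omega$ under \eqref{estimbuniftA} matches the paper (Proposition~\ref{prop628f93c2}\ref{prop628f93c2_totheboundary}), and your Osgood computation is correct; it is the Sobolev--Orlicz modulus estimate, the heart of the theorem, that remains unproved as written.
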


Notice that a byproduct of the proof of Theorem~\ref{thm5e999915}
is that the Sobolev-Orlicz space $W^1L_\Theta(\Omega)$ embeds in $C^0(\Omega)$, with modulus of continuity that depends only on $\Theta$. 
See \cite[Section 2.6]{ANGSC} for the definition of $W^1L_\Theta(\Omega)$.

The proof is inspired from \cite{MR1920427}.
In fact, we shall prove that a continuous representative $\tilde b(t,\cdot)$ of  $b(t,\cdot)$ satisfies the Osgood's criterion (see~\cite[Chap. III, Corollary 6.2]{MR1929104} or Proposition~\ref{prop628f93c2} below). The proof of this result is given in Section~\ref{sec5edff2b1}.

We note in Proposition~\ref{prop627d11fd} that if an increasing function $\Theta$ 
satisfies condition~\ref{item04021113} of Theorem~\ref{thm5e999915}, 
then it does not have polynomial growth.
Examples of functions $\Theta$ satisfying the properties~\ref{convTheta}-\ref{item04021113}
are of the form
\begin{equation}\label{sub2expofunct}
\FEL_{k,\beta}(s) = \exp\left(\frac{s}{\log(s)\log\log(s)\dots(\underbrace{\log\dots\log }_{\text{ $k$-times}}s)^\beta} \right)
\quad\text{ for }s \ge\bar s ,
\end{equation}
with $\FEL_{k,\beta}(s)=\FEL_{k,\beta}(\bar s)$ for $s<\bar s$, where $\bar s$ is  large enough, $k\ge\,1$ is an integer and $0\le \beta\le1$, 
see Proposition~\ref{propFEL}. Notice that the asymptotic behaviour of $\FEL_{k,\beta}$ as $s\to\infty$ is almost sharp in order that assumption~\ref{item04021113} holds, see Remark~\ref{sharpbehavsubexpo}.
If $\FEL_{k,1}(c\left\|D_xb\right\|)\in \Lbs_\loc^1(\Omega)$
for some $c>0$,  we say that $\left\|D_xb\right\|$ satisfies a \emph{subexponential summability} of order $k$.
Therefore, Theorem~\ref{thm5e999915} shows that, if $\left\|D_xb\right\|$ has \emph{subexponential summability}, then $b$ has a \emph{classical} unique flow. 
However, we stress that, under the hypothesis of Theorem~\ref{thm5e999915}, $\left\|D_xb\right\|$ does not need not be in $L^p_{\rm loc}(I\times\Omega)$ for each $p>1$ (see Remark~\ref{rem628c97ad}).

\subsection{Regularity}
Moving on to the regularity of the flow,  we can prove that, 
if $D_xb$ satisfies a subexponential summability of order $1$, the associated flow $X(t,s,\cdot)$ 
satisfies a weak regularity property, namely it maps the Lebesgue measure into absolutely continuous measures. 
Notice that, in this case, $\left\|D_xb(t,\cdot)\right\|$ does belong to $L^p_{\rm loc}(\Omega)$ for every $p>1$, for almost every $t\in I$.
A quantitative version,
that we obtain adapting \cite{MR3494396}, is the following.

\begin{thm}\label{thm5eb900db}
Let $b\in \Lbs^1_\loc(I;W^{1,1}_\loc(\R^n;\R^n))$.
Suppose that
\begin{equation}\label{globexsistb}
\frac{|b(t,x)|}{1+|x|\,\log^+|x|}\in L^1(I;L^\infty(\R^n))\,,
\end{equation}
and
\begin{equation}\label{eq5eb95263}
\int_I\int_{\R^n}\exp\left( \frac{ \|D_xb\| }{ \log^+\|D_xb\| } \right)(t,x)\,d\gamma_n(x)dt <\,+\infty,
\end{equation}
where  $\gamma_n$ is the Gaussian measure on $\R^n$,
namely
\[
\gamma_n:=\,\frac{1}{(2\pi)^{n/2}}\exp\left(-\frac{|x|^2}{2}\right) \Lbm^n\, 
\]
with $\Lbm^n$ the Lebesgue measure in $\R^n$.

Then the space continuous representative $\tilde{b}$, granted by the Sobolev embedding, is well-posed in $\overline{I}\times\Omega$ and the associated flow $X$ of $\tilde{b}$ is globally defined, that is,  
$X:\overline I\times \overline I\times\R^n\to\R^n$.
Moreover, for every $t,\,s\in \overline I$, the image measure $X(t,s,\cdot)_\# \Lbm^n$ is absolutely continuous with respect to $\Lbm^n$ 
and there exists a positive constant $\alpha_0(t,s)>0$ such that
\begin{equation*}\label{eq5eb91e42}
\frac{\dd}{\dd \Lbm^n}(X(t,s,\cdot)_\# \Lbm^n)\in \Lbs^{\Phi_\alpha}_\loc(\R^n)
\end{equation*}
for each $0<\alpha<\alpha_0(s,t)$, where $\Lbs^{\Phi_\alpha}_\loc(\R^n)$ is the Orlicz space with
\begin{equation*}\label{Phialpha}
\Phi_\alpha:[0,+\infty)\to[0,+\infty),
\qquad
\Phi_\alpha(w):= w \exp((\log^+ w)^\alpha) ,
\end{equation*}
and $\alpha_0:I\times I\to\R$ is continuous with $\alpha_0(t,t)=1$ for every $t\in I$.
\end{thm}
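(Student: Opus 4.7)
The plan is to first obtain well-posedness, then global existence of the flow, and finally the Orlicz-type bound on the pushforward density.

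For well-posedness I would apply Theorem~\ref{thm5e999915} with $\Theta=\FEL_{1,1}$, i.e.\ $\Theta(s)=\exp(s/\log s)$ for $s$ large; properties~\ref{convTheta}--\ref{item04021113} are established in Proposition~\ref{propFEL}. Since the Gaussian density $d\gamma_n/d\Lbm^n$ is bounded below by a positive constant on every compact set, assumption~\eqref{eq5eb95263} implies the local hypothesis~\eqref{eq5ea991ab} of Theorem~\ref{thm5e999915} on every ball, and the growth bound~\eqref{globexsistb} gives $|b(t,x)|\leq m(t)(1+|x|\log^+|x|)$ with $m\in L^1(I)$, whence~\eqref{estimbuniftA} holds on each ball. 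This produces a continuous representative $\tilde b$ that is well-posed on $\overline I\times\Omega$. Global existence of $X$ then follows by comparison with the ODE $r'=m(\tau)(1+r\log^+ r)$, which does not blow up in finite time (the substitution $u=\log(1+\log^+ r)$ integrates it explicitly); hence $X$ is defined on all of $\overline I\times\overline I\times\R^n$ and, on compact subsets, trajectories remain in a uniform compact.

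The bulk of the work is the Orlicz bound on $\rho_{t,s}:=d(X(t,s,\cdot)_\#\Lbm^n)/d\Lbm^n$, for which I would adapt~\cite{MR3494396}. Regularize $b$ by $b^\epsilon:=b\star\eta_\epsilon$; Jensen's inequality applied to the convex, nondecreasing function $\FEL_{1,1}$ preserves the $\FEL_{1,1}$-integrability of $\|D_xb^\epsilon\|$ uniformly in $\epsilon$. For the smooth flow $X^\epsilon$ one has $\log J^\epsilon(t,s,x)=\int_s^t\div_xb^\epsilon(\tau,X^\epsilon(\tau,s,x))\,\dd\tau$ and $\rho^\epsilon_{t,s}(y)=1/J^\epsilon(t,s,X^\epsilon(s,t,y))$. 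Changing variables $y=X^\epsilon(t,s,x)$ yields, for every compact $K\subset\R^n$,
\[
\int_K\Phi_\alpha(\rho^\epsilon_{t,s})\,\dd\Lbm^n
=\int_{X^\epsilon(s,t,K)}\exp\bigl(((-\log J^\epsilon(t,s,x))^+)^\alpha\bigr)\,\dd x.
\]
Bounding $(-\log J^\epsilon)^+\leq\int_s^t|\div_x b^\epsilon(\tau,X^\epsilon(\tau,s,x))|\,\dd\tau$, then using Jensen against $\dd\tau/(t-s)$ together with the convexity of $u\mapsto\exp(|t-s|^\alpha u^\alpha)$ on its convex region (the complement producing only a bounded term), and finally undoing the pullback along the trajectories, reduces the matter to a uniform-in-$\epsilon$ control of a time-averaged integral of $\exp(|t-s|^\alpha|\div_xb^\epsilon|^\alpha)$ against $\gamma_n$ on a fixed compact set (this is where the uniform boundedness of $X^\epsilon$ on compacts from Step~2 is used). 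A Young-type inequality matching the input class $\FEL_{1,1}$ with the output $\Phi_\alpha$ then closes the estimate, provided $|t-s|^\alpha$ stays below a threshold dictated by~\eqref{eq5eb95263}. That threshold defines $\alpha_0(t,s)$; since $X(t,t,\cdot)=\mathrm{id}$ forces $\rho_{t,t}\equiv 1$, every $\alpha\leq 1$ is admissible at $t=s$, giving $\alpha_0(t,t)=1$, and continuity of $\alpha_0$ follows from the continuity of the constants appearing in the estimate.

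The principal difficulty is precisely this last matching step: pairing the sub-exponential class of $\|D_xb\|$ with $\Phi_\alpha$ via a Young--Fenchel type inequality while tracking $\alpha_0(t,s)$ explicitly and continuously, in such a way that $\alpha_0\to 1$ as $t\to s$. Once the uniform-in-$\epsilon$ bound is in place, stability of the classical flow (from the Osgood-type modulus of continuity underlying Theorem~\ref{thm5e999915}) yields locally uniform convergence $X^\epsilon\to X$, hence weak convergence of the pushforward measures; lower semicontinuity of the Orlicz functional $\mu\mapsto\int_K\Phi_\alpha(d\mu/d\Lbm^n)\,\dd\Lbm^n$ along such sequences transfers the estimate to $X$, yielding absolute continuity and the claimed $\Lbs^{\Phi_\alpha}_\loc$-bound on $\rho_{t,s}$.
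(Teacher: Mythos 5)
The first half of your plan (well-posedness via Theorem~\ref{thm5e999915} with $\Theta=\FEL_{1,1}$ and Proposition~\ref{propFEL}, using that the Gaussian density is locally bounded below to get \eqref{eq5ea991ab} from \eqref{eq5eb95263}, and global existence of $X$ from the growth bound \eqref{globexsistb}) is exactly what the paper does, and is fine. The gap is in the core Orlicz estimate. The paper does \emph{not} re-derive it: it verifies that $b$ satisfies the hypotheses of the Main Theorem of \cite{MR3494396}, which directly yields that $X(t,s,\cdot)_\#\gamma_n\ll\gamma_n$ with density $w(t,s)\in \Lbs^{\Phi_\alpha}(\R^n,\gamma_n)$ for all $\alpha<\alpha_0(t,s)=\exp\bigl(-16e^2\int_s^t\|\div_{\gamma_n}b(v,\cdot)\|_{{\rm Exp}_{\gamma_n}(L/\log L)}\dd v\bigr)$, where $\div_{\gamma_n}b=\div_x b-x\cdot b$; the only remaining work in the paper is the change of reference measure, proving via the differentiation theorem for Radon measures that $\frac{\dd}{\dd\Lbm^n}\bigl(X(t,s,\cdot)_\#\Lbm^n\bigr)(x)=\exp\bigl(\tfrac{|X(s,t,x)|^2-|x|^2}{2}\bigr)\,w(t,s)(x)$, which transfers the Gaussian statement to the local Lebesgue one.

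Your attempted re-derivation breaks precisely at the step you yourself label the principal difficulty. After changing variables $y=X^\epsilon(t,s,x)$ and applying Jensen in time, ``undoing the pullback along the trajectories'' requires a bound on the Jacobian of the flow, i.e.\ on the very density you are trying to estimate, so as written the argument is circular; and the proposed ``Young-type inequality matching $\FEL_{1,1}$ with $\Phi_\alpha$'' is asserted, not proved (note that a pointwise comparison $|t-s|^\alpha u^\alpha\lesssim u/\log^+u$ holds for every $\alpha<1$, so if such a matching alone sufficed it would give $\alpha_0\equiv1$, contradicting the quantitative result, whose threshold genuinely depends on the ${\rm Exp}_{\gamma_n}(L/\log L)$ norm of the Gaussian divergence). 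Relatedly, your construction does not yield an $\alpha_0(t,s)$ that is continuous with value $1$ on the diagonal: observing $\rho_{t,t}\equiv1$ fixes only the diagonal value, whereas the required behavior comes from the explicit exponential-of-a-time-integral formula above. A further small point: uniform-in-$\epsilon$ preservation of the Gaussian-weighted sub-exponential bound under convolution is not a bare Jensen application, since $\gamma_n$ is not translation invariant. To close the proof you should either invoke \cite{MR3494396} as the paper does and then carry out the $\gamma_n$-to-$\Lbm^n$ conversion, or supply in full the quantitative argument for the Gaussian pushforward density in terms of $\div_{\gamma_n}b$.
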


See Section~\ref{sec5edfede8} for the proof. 
By trivial considerations, in the one-dimensional case, we can improve Theorem~\ref{thm5eb900db} to absolutely continuity of the flow.

\begin{thm}\label{thm5ea9862b}
If $n=1$ and $b$ satisfies the conditions of Theorem~\ref{thm5eb900db}, then, for every $t,\,s\in I$,
the map $X(t,s,\cdot)$ is an absolutely continuous homeomorphism between intervals of~$\R$.
\end{thm}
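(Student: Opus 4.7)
The strategy is to package two standard one-dimensional facts with the conclusion of Theorem~\ref{thm5eb900db}: (i) a continuous injection from an interval of $\R$ to $\R$ is strictly monotone, and (ii) a continuous monotone function that sends $\Lbm^1$-null sets to $\Lbm^1$-null sets is absolutely continuous (Banach--Zaretsky). It therefore suffices to deduce the Luzin (N) property for $\phi:=X(t,s,\cdot)$.

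By Theorem~\ref{thm5eb900db}, $\tilde b$ is well-posed on $\overline I\times\R$ and the flow $X$ is globally defined on $\overline I\times\overline I\times\R$. The semigroup identity $X(s,t,X(t,s,x))=x$ (and its mirror) gives $X(s,t,\cdot)=X(t,s,\cdot)^{-1}$, so $\phi:\R\to\R$ is a continuous bijection with continuous inverse; being a continuous injection of the interval $\R$ into itself, it is strictly monotone.

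Next I would apply Theorem~\ref{thm5eb900db} with $t$ and $s$ \emph{swapped} to obtain $X(s,t,\cdot)_\#\Lbm^1\ll\Lbm^1$. For every Borel set $E\subset\R$ with $\Lbm^1(E)=0$,
\[
\Lbm^1(\phi(E))
= \Lbm^1\bigl(X(s,t,\cdot)^{-1}(E)\bigr)
= \bigl(X(s,t,\cdot)_\#\Lbm^1\bigr)(E)
= 0,
\]
which is exactly Luzin (N) for $\phi$. On every bounded subinterval $[a,b]\subset\R$ the restriction $\phi|_{[a,b]}$ is continuous, monotone and maps null sets to null sets, so Banach--Zaretsky yields absolute continuity on $[a,b]$; letting $[a,b]$ exhaust $\R$ gives absolute continuity of $\phi$.

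There is no substantive obstacle---this is precisely the ``trivial considerations'' the authors allude to. The only conceptual point worth flagging is that $\phi_\#\Lbm^1\ll\Lbm^1$ and $(\phi^{-1})_\#\Lbm^1\ll\Lbm^1$ are \emph{different} conditions (the first controls preimages of null sets, the second controls images), and Luzin (N) for $\phi$ corresponds to the second; this is why one must invoke Theorem~\ref{thm5eb900db} with the time variables exchanged rather than in its original form.
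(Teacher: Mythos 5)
Your proof is correct and follows essentially the same route as the paper: Theorem~\ref{thm5eb900db} (applied with the roles of $s$ and $t$ exchanged, exactly the point you flag) gives the Lusin (N) property for $X(t,s,\cdot)$ and its inverse, and the classical real-analysis fact that a continuous monotone function with Lusin (N) is absolutely continuous (Banach--Zaretsky) concludes, which is precisely the paper's two-line argument. The only cosmetic caveat is that your exhaustion by compact intervals yields \emph{local} absolute continuity on $\R$, which is also what the paper's proof asserts.
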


Sobolev regularity stated in Theorem~\ref{thm5ea9862b} is sharp, as we show in Example~\ref{sec5eb962e5}.
We don't know whether Theorem~\ref{thm5ea9862b} can be extended to
the case of sub-exponential summability of order $k>1$. 

In higher dimensions, for subexponential summability,
we have a partial negative result with an example in Section~\ref{sec5eb962e5},
or the example constructed in~\cite{MR4263701}, see also Remark~\ref{rem5edfebb7}:
{\it there are vector fields satisfying a subexponential summability of order $1$ whose flow is not in $W^{1,p}_\loc$ for 
 any $p>n$}. 
However, it remains open whether vector fields satisfying a subexponential summability of order $1$ can fail to have the flow in $W^{1,1}_\loc$.

On the other hand, in higher dimensions, for exponential summability we have a positive result:

\begin{thm}\label{thm5e9a10ec}
Let $I\subset\R$ and $\Omega\subset\R^n$ be bounded open sets and let $b\in \Lbs^1_\loc(I;W^{1,1}_\loc(\Omega;\R^n))$ be bounded.
Assume that for some $p>2n$ the vector field $b$ satisfies the global geometric condition
\begin{equation}\label{eq:geometric}
\Lambda_p:=\int_\Omega\int_I \max\left\{\ell^{\frac{n}{n-p}},\frac{({\rm dist}(x,\partial\Omega))^{\frac{n}{n-p}}}{(\sup |b|)^{\frac{n}{n-p}}}\right\}
	\exp\left(\frac{\ell p^2}{p-n} \|D_xb(s,x)\| \right)\dd s\dd x<+\infty,
\end{equation}
with $\ell$ equal to the length of $I$. 
 Then the space-continuous representative $\tilde{b}$ is well-posed in $\overline{I}\times\Omega$
thanks to Theorem~\ref{thm5e999915} and, 
in addition, for every $t\in {I}$ and for almost every $s\in I$, one has
\begin{equation}\label{eq5ff85eab}
\text{for a.e.~}s\in I,\quad
X(t,s,\cdot)\in W^{1,p}(\Omega_{(t,s)};\R^n)
\quad\text{and}\quad
X(s,t,\cdot)\in W^{1,p}(\Omega_{(s,t)};\R^n) ,
\end{equation}
with
\begin{equation}\label{eq63eb5e33}
	\int_I\int_{\Omega_{(t,s)}} \|D_xX(t,s,x)\|^p \dd x\dd s\leq \ell^{\frac{n}{p-n}}\Lambda_p .
\end{equation}
\end{thm}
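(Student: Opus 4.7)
My plan is to establish~\eqref{eq63eb5e33} by smooth approximation together with a Volterra-type estimate obtained from the matrix variation equation. First I invoke Theorem~\ref{thm5e999915} with $\Theta(r)=\exp(\tfrac{\ell p^2}{p-n}r)$, whose hypotheses are implied by~\eqref{eq:geometric}, to obtain well-posedness of the space-continuous representative $\tilde b$ and convergence of the classical flows $X_\epsilon$ of smooth mollifications $b_\epsilon:=\tilde b*\rho_\epsilon$ to the maximal flow $X$, locally uniformly on the open sets of existence. For smooth $b_\epsilon$ the Jacobian $D_xX_\epsilon(\tau,s,x)$ obeys the linear variation equation $\partial_\tau(D_xX_\epsilon)=D_xb_\epsilon(\tau,X_\epsilon)\,D_xX_\epsilon$ with $D_xX_\epsilon(s,s,x)=I$, so Gronwall yields
\[
\|D_xX_\epsilon(t,s,x)\|\le \exp\Bigl(\int_{s\wedge t}^{s\vee t}\|D_xb_\epsilon(\tau,X_\epsilon(\tau,s,x))\|\,d\tau\Bigr).
\]

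Next, raising this bound to the $p$-th power, applying Jensen's inequality to the exponential with probability measure $d\tau/|t-s|$ on $[s\wedge t,s\vee t]$ (and using the crude $|t-s|\le\ell$), integrating over $x\in\Omega_{(t,s)}$, and changing variables $y=X_\epsilon(\tau,s,x)$ with $dx=|\det D_yX_\epsilon(s,\tau,y)|\,dy$, I would then apply Hölder's inequality in $y$ with conjugate exponents $p/(p-n)$ and $p/n$. Together with the elementary bound $|\det A|^{p/n}\le\|A\|^p$ (from $|\det A|\le\|A\|^n$), this produces the Volterra-type inequality
\[
|t-s|\,\Phi_\epsilon(t,s)\le\int_{s\wedge t}^{s\vee t}\tilde A_\epsilon(\tau)^{(p-n)/p}\,\Phi_\epsilon(s,\tau)^{n/p}\,d\tau,
\]
where $\Phi_\epsilon(\sigma,\tau):=\int\|D_xX_\epsilon(\sigma,\tau,x)\|^p\,dx$ and $\tilde A_\epsilon(\tau):=\int_\Omega\exp\bigl(\tfrac{\ell p^2}{p-n}\|D_xb_\epsilon(\tau,y)\|\bigr)\,dy$. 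Crucially, the exponent $\ell p^2/(p-n)$ in $\tilde A_\epsilon$ matches exactly the one in~\eqref{eq:geometric}: it is generated precisely by the Hölder conjugate $p/(p-n)$ multiplying the $p\ell$ inside the exponential.

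To close this into $F_\epsilon(t):=\int_I\Phi_\epsilon(t,s)\,ds\le\ell^{n/(p-n)}\Lambda_{p,\epsilon}$, I would integrate the Volterra inequality over $s\in I$, use Fubini to swap the $s$ and $\tau$ integrations, and apply Hölder successively in $s$ (on the resulting inner integral, over the interval $[\tau\wedge t,\tau\vee t]$ of length $|t-\tau|$) and then in $\tau$, again with conjugate exponents $p/(p-n)$ and $p/n$. The geometric weight $\max\{\ell^{n/(n-p)},({\rm dist}(x,\partial\Omega)/\sup|b|)^{n/(n-p)}\}=\tau_x^{-n/(p-n)}$, with $\tau_x:=\min\{\ell,{\rm dist}(x,\partial\Omega)/\sup|b|\}$, plays the decisive role here: the boundedness of $b$ guarantees that the characteristic from $(s,x)$ remains in $\Omega$ for at least $|t-s|<\tau_x$, and the weight $\tau_x^{-n/(p-n)}$ in~\eqref{eq:geometric} is exactly what is needed to absorb the singular $1/|t-s|$ factor implicit in Jensen's bound, producing the constant $\ell^{n/(p-n)}$. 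The hypothesis $p>2n$ ensures that all the intermediate Hölder exponents are simultaneously admissible.

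Finally I would pass to the limit $\epsilon\downarrow 0$: the Orlicz convergence of $\|D_xb_\epsilon\|\to\|D_xb\|$ in the exponential class of~\eqref{eq:geometric} gives $\tilde A_\epsilon\to\tilde A$ and $\Lambda_{p,\epsilon}\to\Lambda_p$, while the local uniform convergence $X_\epsilon\to X$ combined with Fatou's lemma preserves the inequality and delivers~\eqref{eq63eb5e33}; the corresponding bound for $X(s,t,\cdot)$ is obtained by applying the same argument to the (time-reversed) inverse flow. The main obstacle is the closure step: the $1/|t-s|$ factor produced by Jensen is genuine, and the Volterra inequality $|t-s|\,\Phi_\epsilon(t,s)\le\int\tilde A_\epsilon^{(p-n)/p}\Phi_\epsilon(s,\tau)^{n/p}d\tau$ cannot be divided through by $|t-s|$ and integrated naively; its balancing against the geometric weight $\tau_x^{-n/(p-n)}$ requires a delicate Fubini bookkeeping calibrated precisely to the constraint $p>2n$ and to the shape of~\eqref{eq:geometric}. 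A secondary subtlety is managing the open, variable sets $\Omega_{(t,s)}$ during the limit, which I would resolve using the exhaustion structure available in the maximal-flow framework of Section~\ref{sec5ed296d6}.
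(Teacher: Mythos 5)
Your scheme follows the paper's skeleton (Gronwall bound on $D_xX$, Jensen, change of variables along the flow, H\"older with exponents $p/n$ and $p/(p-n)$, smooth approximation plus Fatou), but the decisive step is missing, and the way you propose to fill it does not work. You apply Jensen with the probability measure $\dd\tau/|t-s|$ on $[s\wedge t,s\vee t]$, which creates the singular factor $1/|t-s|$; you then concede that the resulting Volterra-type inequality ``cannot be divided through by $|t-s|$ and integrated naively'' and defer the closure to an unspecified ``delicate Fubini bookkeeping'' in which the weight $\tau_x^{-n/(p-n)}$ is supposed to absorb the singularity. That weight never appears in your inequality (your $\tilde A_\epsilon$ is the \emph{unweighted} exponential integral), so there is no mechanism by which it could absorb anything; and indeed, if you integrate your inequality in $s$, the left-hand side carries the degenerate weight $|t-s|$ and does not control $\int_I\Phi_\epsilon(t,s)\dd s$, while dividing first and integrating produces a non-integrable $1/|t-\tau|$ against quantities that do not vanish at $\tau=t$. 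This is precisely the point where the actual proof does something different: Jensen is applied with the normalized measure on the \emph{whole maximal existence interval} $I_{(s,x)}$ in $A$, so the factor is $1/\ell_A(s,x)$ rather than $1/|t-s|$; after the change of variables, the identity $\int_I\chi_{A(s,v)}(y)\dd s=\ell_A(v,y)$ turns this into the weight $\ell_A(v,y)^{-n/(p-n)}$, which is dominated by the geometric weight in \eqref{eq:geometric} via $\ell(s,x)\ge\min\{\ell,{\rm dist}(x,\partial\Omega)/\sup|b|\}$ (see \eqref{eq6283574e}); and the estimate is closed by a bootstrap: integrating the resulting inequality over $t$ makes the same triple integral appear on both sides with powers $1$ and $n/p$, and its finiteness for $C^1$ vector fields on $A\Subset\Omega$ allows one to rearrange, yielding \eqref{eq63c7ad49} and then \eqref{eq63c7c112}. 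So the geometric weight enters as a lower bound on the exit time, not as a counterweight to a time singularity, and no $1/|t-s|$ ever occurs. Relatedly, your explanation of $p>2n$ is off: the H\"older steps only need $p>n$; $p>2n$ is used so that $n/(n-p)\in(-1,0)$ in the limiting argument $A^\epsilon\uparrow\Omega$ (the averaged-in-$\epsilon$ estimate of ${\rm dist}(y,\de A^\epsilon)^{n/(n-p)}$) in Theorem~\ref{thm5e9a10ec_bis}.

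A second, lesser gap is the approximation step. Mollifying $\tilde b$ in space on a bounded domain and asserting that ``Orlicz convergence'' gives $\Lambda_{p,\epsilon}\to\Lambda_p$ is not justified: the exponential is non-doubling and the weight blows up at $\partial\Omega$, which is exactly why the paper invokes the dedicated global approximation result (Theorem~\ref{thm:MS}), together with a truncation keeping $\sup|b_h|\le\sup|b|$ so that \eqref{eq6283574e} transfers to the approximants; only a $\limsup_h\Lambda'_{p,h}\le\Lambda_p$ is needed, combined with the lower semicontinuity of $\int_A\|D_xX\|^p$ under the locally uniform convergence of the flows on the variable domains $\Omega_{(t,s)}$. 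This part of your plan is repairable, but without a genuine closure of the core estimate the proof of \eqref{eq63eb5e33} is not established.
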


With regards to~\eqref{eq5ff85eab}, 
a lower Sobolev regularity for the flow can be proved for all pairs of times $s,t\in I$, see Corollary~\ref{cor63edf816}.
For $b\in C^0(I;C^1(\Omega;\R^n))$, we have a more detailed statement in Theorem~\ref{thm5e9a10ec_bis}.
We may also consider the case when $b:I\times\R^n\to\R^n$ and the support of $b(t,\cdot)$ is contained in a 
compact set  independent of $t$. See also Remarks~\ref{rem:torus} and~\ref{rem63eb57a1}.

\begin{thm}\label{thmbcpcsupp}
	Let $I\subset\R$ be a bounded open interval and let $b\in \Lbs^1_\loc(I;W^{1,1}_\loc(\R^n;\R^n))$
	be a bounded vector field. Assume that there exist a bounded open set $\Omega\subset\R^n$ and $p>n$  such that
	\begin{equation}\label{eq63eb50aa}
	\spt(b(t,\cdot))\subset \Omega\quad\text{for each $t\in I$}
	\end{equation}
	and \eqref{eq5eb965b3} holds with $\beta=\,{\ell p^2}/{(p-n)}$ and $\ell$ equal to the length of $I$. 	
	
	Then the space-continuous representative $\tilde{b}$, understood as vector field in $I\times\Omega$, is well-posed in $\overline{I}\times\Omega$
	thanks to Theorem~\ref{thm5e999915}, $\Omega_{(t,s)}=\,\Omega$ and  for every $t,\,s\in \overline{I}$ one has 
	\begin{equation}\label{eq5e9998eb}
	\begin{split}
	&X(t,s,\cdot)\in W^{1,p}(\Omega;\R^n)\quad\text{with}\\
	&
	\int_{\Omega} \|D_xX(t,s,x)\|^p \dd x
	\leq
	\frac1{\ell} 
	\int_{I}\int_\Omega 
		\exp\left( \frac{\ell p^2}{p-n}\|D_xb(v,y)\| \right) 
	\dd y\dd v .	
	\end{split}
	\end{equation}
\end{thm}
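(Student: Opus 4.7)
The plan is to establish the three assertions in order. Well-posedness follows from Theorem~\ref{thm5e999915} applied with $\Theta(s)=\exp(\beta s)$, $\beta=\ell p^2/(p-n)$: the three hypotheses \ref{convTheta}--\ref{item04021113} are routine to verify for this exponential (any real power of $\exp$ is convex, the submultiplicativity~\eqref{condBI} with $C_\Theta=2$ reduces to $1/s_1+1/s_2\leq 1$, and the divergence integral is $\int_1^\infty \beta/s\dd s$), and assumption~\eqref{eq5eb965b3} supplies the local integrability of the function $\psi$ in~\eqref{eq5ea991ab}. For the identity $\Omega_{(t,s)}=\Omega$, the support hypothesis forces $\tilde b(r,\cdot)$ to vanish on $\R^n\setminus\Omega\supset\partial\Omega$ for every $r\in I$, so the constant curve through any $y\in\R^n\setminus\Omega$ is the unique solution of the ODE there. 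If a trajectory starting at $x\in\Omega$ were to reach $\partial\Omega$ at some time $r_0\in\bar I$, uniqueness at $(r_0,X(r_0,s,x))$ would force it to be constantly equal to $X(r_0,s,x)$, contradicting $X(s,s,x)=x\in\Omega$; hence trajectories starting in $\Omega$ remain in $\Omega$ for all $r\in\bar I$, and each $X(t,s,\cdot)$ is a homeomorphism of $\Omega$ onto itself.

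For the Sobolev estimate I would first assume $b$ smooth and proceed as follows. The Gronwall bound $\|D_xX(t,s,x)\|^p\leq\exp(p\int_s^t\|D_xb(r,X(r,s,x))\|\dd r)$, together with positivity of the integrand, allows extending $[s,t]$ to all of $I$; Jensen's inequality with respect to the probability measure $\dd r/\ell$ on $I$ then yields
\[
\|D_xX(t,s,x)\|^p\leq \frac{1}{\ell}\int_I\exp\!\bigl(\ell p\,\|D_xb(r,X(r,s,x))\|\bigr)\dd r.
\]
Integrating in $x\in\Omega$, applying Fubini, and changing variables $y=X(r,s,x)$ (a self-homeomorphism of $\Omega$ by the previous step) introduces a Jacobian $|\det D_yX(s,r,y)|$. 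H\"older in $y$ with the conjugate exponents $p/(p-n)$ and $p/n$ raises the exponential coefficient from $\ell p$ to exactly $\beta=\ell p^2/(p-n)$, and Hadamard's inequality $|\det A|\leq\|A\|^n$ converts the Jacobian term into $\phi(s,r)^{n/p}$, where $\phi(t,s):=\int_\Omega\|D_xX(t,s,x)\|^p\dd x$ and $F(r):=\int_\Omega\exp(\beta\|D_xb(r,y)\|)\dd y$. The result is the self-referential inequality
\[
\phi(t,s)\leq\frac{1}{\ell}\int_I F(r)^{(p-n)/p}\phi(s,r)^{n/p}\dd r.
\]
Setting $M:=\sup_{t,s\in\bar I}\phi(t,s)$ (finite in the smooth case), this gives $M^{(p-n)/p}\leq \frac{1}{\ell}\int_I F^{(p-n)/p}\dd r$, and one further H\"older inequality in $r$ (again with conjugate exponents $p/(p-n)$ and $p/n$) yields $\frac{1}{\ell}\int_I F^{(p-n)/p}\dd r\leq \bigl(\frac{1}{\ell}\int_I F\dd r\bigr)^{(p-n)/p}$, closing the loop to $M\leq \frac{1}{\ell}\int_I F\dd r$, which is~\eqref{eq5e9998eb}.

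The main obstacle is transferring the estimate from smooth $b$ to $b\in \Lbs^1_\loc(I;W^{1,1}_\loc(\R^n;\R^n))$, where neither the pointwise Gronwall estimate nor the pointwise change of variable is directly available. I would handle this by mollifying $b$ in the spatial variable to obtain smooth approximants $b_k$ whose supports remain in $\Omega$ and whose exponential integrals converge to those of $b$; the smooth flows $X_k$ converge locally uniformly to $X$, and lower semicontinuity of $\int_\Omega\|\cdot\|^p$ along this convergence, combined with the uniform smooth bound $\phi_k(t,s)\leq \frac{1}{\ell}\int_I F_k\dd r$, passes the estimate to the limit. The a priori finiteness of $M$ used to open the closing loop is automatic for each smooth $b_k$, so the uniform estimate survives the approximation.
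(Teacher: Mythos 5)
Your smooth-case a priori estimate is correct and is, in substance, the paper's own: the Gronwall bound is Lemma~\ref{lem04031001}, and the chain Jensen (with $\dd r/\ell$) -- change of variables $y=X(r,s,x)$ -- Hadamard -- H\"older with exponents $p/n$ and $p/(p-n)$ is exactly how Theorem~\ref{thm5e9a10ec_bis} is proved. The one real difference is how the self-referential inequality is closed: the paper integrates in both time variables, absorbs at the level of the triple integral, and then recovers the fixed-time bound \eqref{eq63ea3f57}, which for $\hat\ell=\ell$ gives \eqref{eq5e9998eb} (this detour is what makes the argument work also in Theorem~\ref{thm5e9a10ec}, where there is no compact support and $\ell(s,x)$ varies); you absorb instead at the level of $M=\sup_{t,s}\phi(t,s)$, which is legitimate in the compactly supported case because the flow is global, $\ell_\Omega\equiv\ell$, and $M<\infty$ for regularized fields, and your final H\"older in $r$ reproduces the constant $\frac1\ell\int_I F$ exactly. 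The well-posedness verification for $\Theta(s)=\exp(\beta s)$ and the argument that trajectories from $\Omega$ cannot reach $\partial\Omega$ (hence $\Omega_{(t,s)}=\Omega$) are fine, provided you note that $\tilde b$, extended by zero, satisfies the hypotheses of Theorem~\ref{thm5e999915} on all of $\R^n$, so that uniqueness is available at points of $\partial\Omega$ as well.

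The genuine gap is the approximation step, which you dispatch in one sentence. Plain spatial mollification does not produce fields ``whose supports remain in $\Omega$'': convolution enlarges supports by the mollification radius, and the hypothesis $\spt(b(t,\cdot))\subset\Omega$ gives no uniform-in-$t$ distance from $\partial\Omega$, so some $b_k(t,\cdot)$ may charge $\R^n\setminus\Omega$. Nor is convergence of the exponential energies automatic: since $\exp$ is not doubling, standard convolution estimates do not yield it, and this is precisely the content of the companion result Theorem~\ref{thm:MS}(ii) (from \cite{ANGSC}) that the paper invokes at exactly this point. What soft arguments do give is one-sided: $\|D_xb_k\|\le\|D_xb\|*\rho_{\epsilon_k}$ and Jensen imply $\int_{A}\exp(\beta\|D_xb_k\|)\dd x\le\int_{A+B(0,\epsilon_k)}\exp(\beta\|D_xb\|)\dd x$; running your scheme on the slightly enlarged domain and shrinking the enlargement leaves a spurious additive term $\ell\,|\partial\Omega|$ in \eqref{eq5e9998eb}, so as written your route proves the stated inequality only when $|\partial\Omega|$ is Lebesgue negligible. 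The clean fix is to replace your mollification claim by an appeal to Theorem~\ref{thm:MS}(ii), as the paper does; the remainder of your limiting argument (locally uniform convergence of the flows, as in Lemma~\ref{lem5e996a0e} and \eqref{eq:domains}, lower semicontinuity \eqref{eq63eb631f}, exhaustion of $\Omega$) then goes through unchanged. A minor further point: if you mollify only in space, your approximants are merely Carath\'eodory in time, so you need the (standard) Carath\'eodory versions of Lemmas~\ref{lem04030932} and~\ref{lem04031001}, or else mollify in time as well, as Theorem~\ref{thm:MS} provides.
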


Notice also that, in the previous theorem, we obviously have that the flow map $X:\,\overline{I}\times\overline{I}\times\R^n\to\R^n$ of $b$ is 
identically equal to the identity on $\overline{I}\times\overline{I}\times (\R^n\setminus\overline{\Omega})$.

\begin{remark} \label{rem:extensions} 
(1) Since ${\rm dist}(x,\partial\Omega)$ is bounded, 
\eqref{eq:geometric} can be stated in the equivalent form
$$
\int_\Omega\int_I ({\rm dist}(x,\partial\Omega))^{n/(n-p)}\exp\left(\frac{\ell p^2}{p-n} \|D_xb(s,x)\| \right)\dd s\dd x<+\infty.
$$
We used that specific form for the purpose of the estimate \eqref{eq5ff85eab}.

(2) If $\alpha=n/(p-n)$ is smaller than 1 (i.e., $p>2n$) and $\partial\Omega$ is regular, then the geometric condition~\eqref{eq:geometric}, thanks to the
H\"older inequality, is implied by the simpler condition
\begin{equation}\label{eq:flow63}
\int_\Omega\int_I \exp\left(c\|D_xb(s,x)\| \right)\dd s\dd x<+\infty
\end{equation} 
provided $c>(1/\alpha)'p^2/(p-n)$, where $(1/\alpha)'=(1-\alpha)^{-1}$ is the dual exponent.
\end{remark}

We also point out that the Sobolev exponent $p$, related to the constant in the exponential
integrability condition in \eqref{eq:geometric}, may not be sharp. 
In other words, we can have $\Lambda_p=\infty$ but $X(t,s,\cdot)\in W^{1,p}(\Omega_{(t,s)};\R^n)$, see Example~\ref{sec5ec2a1ee}.

Several regularity properties of $X$ follow from Theorem~\ref{thm5e9a10ec}, see Corollaries~\ref{cor5ec7b067},~\ref{cor5edfe500} and~\ref{cor5ec2a4b4}. In particular we show that, as in Theorem~\ref{thm5eb900db},  $X(t,s,\cdot)_\#\Lbm^n\ll\Lbm^n$, for each 
$t,\,s\in\overline{I}$.

\begin{remark}
	Suppose that $b\in \Lbs^1_\loc(I;W^{1,1}_\loc(\R^n;\R^n))$
	has compact support and~\eqref{eq:flow63} holds for some $c>0$.
	For every $p>2n$, we get~\eqref{eq5e9998eb} on subintervals of $I$ of length $\ell$ such that
	$\frac{\ell p^2}{p-n} = c$.
	Since $\frac{p^2}{p-n}>4n$ for $p>2n$, then $\ell<\frac{c}{4n}$.
\end{remark}

\subsection{Transport and continuity equations}
Finally, we apply Theorem~\ref{thm5eb900db} and Theorem~\ref{thm5e9a10ec} to the transport equation~\eqref{CPTE} and the continuity equation~\eqref{CPCE}, respectively. By standard methods, we provide existence, uniqueness, representation and regularity of solutions.
 
In \cite[Theorem 1]{MR3458193},  uniqueness of weak solutions $u\in L^\infty((0,T);L^\infty(\R^n))$
has been proved, provided that the spatial derivatives of $b$ satisfies a   sub-exponential summability and $\bar u\in L^\infty(\R^n)$.
Our next result provides even in this context the classical representation of this unique solution in terms of the flow map,
granted by Theorem~\ref{thm5eb900db}.

\begin{thm}[Representation of the solutions for the transport equation]\label{solTE}
Let $b$ be a vector field as in Theorem~\ref{thm5eb900db}, and let $X$ be the flow associated to the continuous
representative $b$.
Then, for each $\bar u\in L^\infty(\R^n)$, for each $t\in [0,T]$, the function
\begin{equation}\label{represcharTE}
v(t,x):=\,\bar u\left(X(t,0,\cdot)^{-1}(x)\right)\text{ for a.e.~}x\in\R^n\,,
\end{equation}
is the unique weak solution in $L^\infty((0,T);L^\infty(\R^n))$ of the Cauchy problem for the transport equation~\eqref{CPTE},
understood in the sense of distributions.
\end{thm}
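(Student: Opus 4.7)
The strategy is to prove existence by regularizing $b$ and passing to the limit in the weak formulation, and to invoke \cite[Theorem~1]{MR3458193} for uniqueness.

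\emph{Well-definedness.} By the flow identity $X(t,0,\cdot)^{-1}=X(0,t,\cdot)$, we may rewrite $v(t,x)=\bar u(X(0,t,x))$. Theorem~\ref{thm5eb900db}, applied with the roles of $t$ and $s$ exchanged, gives $X(0,t,\cdot)_\#\Lbm^n\ll\Lbm^n$ for every $t\in[0,T]$, so $v(t,\cdot)$ is well defined up to $\Lbm^n$-null sets with $\|v\|_{L^\infty((0,T);L^\infty(\R^n))}\le\|\bar u\|_\infty$; joint measurability in $(t,x)$ follows from continuity of $X$.

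\emph{Existence by regularization.} Set $b_\varepsilon:=b\ast_x\rho_\varepsilon$ and $\bar u_\varepsilon:=\bar u\ast\rho_\varepsilon$ for a standard mollifier $\rho_\varepsilon$. A Jensen-type argument shows that the integrability conditions~\eqref{globexsistb} and~\eqref{eq5eb95263} transfer to $b_\varepsilon$ with bounds uniform in $\varepsilon$, so that Theorem~\ref{thm5eb900db} applies uniformly to the family $\{b_\varepsilon\}$. The flow $X_\varepsilon$ of the smooth vector field $b_\varepsilon$ is classical, and by the method of characteristics $v_\varepsilon(t,x):=\bar u_\varepsilon(X_\varepsilon(0,t,x))$ is a classical (hence distributional) solution of the transport equation with data $(b_\varepsilon,\bar u_\varepsilon)$: for every $\varphi\in C_c^\infty([0,T)\times\R^n)$,
\begin{equation*}
\int_0^T\!\int_{\R^n} v_\varepsilon\left[\partial_t\varphi + b_\varepsilon\cdot D_x\varphi + \varphi\,\div_x b_\varepsilon\right]\dd x\,\dd t + \int_{\R^n}\bar u_\varepsilon\,\varphi(0,\cdot)\dd x = 0.
\end{equation*}

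\emph{Passing to the limit.} The Osgood-type modulus of continuity underlying Theorem~\ref{thm5e999915} yields local uniform convergence $X_\varepsilon\to X$ on $[0,T]\times[0,T]\times\R^n$. Combining this with Lusin's approximation of $\bar u$ by continuous compactly supported functions, and with uniform $L^{\Phi_\alpha}_\loc$ bounds on the densities of $(X_\varepsilon(0,t,\cdot))_\#\Lbm^n$ provided by Theorem~\ref{thm5eb900db}, one obtains $v_\varepsilon\to v$ in $L^1_\loc((0,T)\times\R^n)$. Since also $b_\varepsilon\to b$, $\div_x b_\varepsilon\to\div_x b$, and $\bar u_\varepsilon\to\bar u$ in $L^1_\loc$, one passes to the limit in each term of the identity above to deduce that $v$ solves~\eqref{CPTE} in the distributional sense. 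Uniqueness in $L^\infty((0,T);L^\infty(\R^n))$ then follows from \cite[Theorem~1]{MR3458193}, whose sub-exponential summability hypothesis on $D_xb$ is implied by~\eqref{eq5eb95263}. The main obstacle is the convergence $v_\varepsilon\to v$: because $\bar u$ is only $L^\infty$, one cannot rely on pointwise continuity of $\bar u$ along the approximating flows, and the argument must combine uniform stability of $X_\varepsilon$ with uniform Orlicz bounds on the pushforward densities.
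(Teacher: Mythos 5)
Your overall strategy (existence by regularizing $b$, passing to the limit in the weak formulation, uniqueness from \cite[Theorem~1]{MR3458193}) is the same as the paper's, but the crux step you flag yourself -- the convergence $v_\varepsilon\to v$ for merely bounded $\bar u$ -- has a genuine gap. You justify it by ``uniform $L^{\Phi_\alpha}_\loc$ bounds on the densities of $(X_\varepsilon(0,t,\cdot))_\#\Lbm^n$ provided by Theorem~\ref{thm5eb900db}''. Theorem~\ref{thm5eb900db} is a qualitative statement for a single vector field; it does not give Luxemburg-norm bounds uniform over the mollified family, and your claim that \eqref{eq5eb95263} ``transfers to $b_\varepsilon$ with bounds uniform in $\varepsilon$ by a Jensen-type argument'' is not correct as stated: Jensen handles the convexity of $s\mapsto\exp(s/\log^+ s)$, but the Gaussian weight is not translation invariant, and the resulting estimate is
\begin{equation*}
\int_I\int_{\R^n}\exp\Bigl(\tfrac{\|D_xb_\varepsilon\|}{\log^+\|D_xb_\varepsilon\|}\Bigr)\dd\gamma_n\dd t
\le C\int_I\int_{\R^n}\exp\Bigl(\tfrac{\|D_xb\|}{\log^+\|D_xb\|}\Bigr)e^{\varepsilon|x|}\dd\gamma_n\dd t ,
\end{equation*}
whose right-hand side is not controlled by \eqref{eq5eb95263} (the extra factor $e^{\varepsilon|x|}$ against the Gaussian can destroy integrability). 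This is exactly the kind of non-doubling difficulty the paper acknowledges when it needs global approximation results for exponential integrands; so as written the uniform Orlicz control of the pushforward densities along $X_\varepsilon$ is unproved, and with it the $L^1_\loc$ convergence $v_\varepsilon\to v$.

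The paper avoids this entirely by splitting the approximation into two steps, and you can repair your argument the same way. First take $\bar u\in C^\infty\cap L^\infty$: then $v_\varepsilon=\bar u\circ X_\varepsilon(0,t,\cdot)\to v$ locally uniformly follows from local uniform convergence of the flows alone (Lemma~\ref{lem5e996a0e}; the paper also truncates $b$ before mollifying so that $b_\varepsilon$ is bounded and uniformly Lipschitz in $x$, giving globally defined classical flows), and no density bounds whatsoever are needed to pass to the limit in the weak formulation. Then, for general $\bar u\in L^\infty$, keep the vector field $b$ and its flow $X$ fixed and only approximate $\bar u$ by mollified $\bar u_j$ with $\|\bar u_j\|_\infty\le\|\bar u\|_\infty$: since $\bar u_j\to\bar u$ a.e.\ and $X(t,0,\cdot)$ maps null sets to null sets (equivalently, $X(0,t,\cdot)$ satisfies the Lusin (N) condition, which is what Theorem~\ref{thm5eb900db} gives for the single limit flow), one gets $\bar u_j\circ X(0,t,\cdot)\to v(t,\cdot)$ a.e., and dominated convergence concludes. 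This reorganization uses only the qualitative absolute-continuity statement you actually have, instead of the uniform quantitative one you would otherwise need to establish.
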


By means of the theory of maps with finite distortion, see for instance~\cite{MR3184742},
we can also show Sobolev regularity of the solution $v$ to the transport equation,
see Corollary~\ref{cor5ec7d875}. 

By applying  the well-posedness and representation results proved in \cite{MR2439520,MR3906270} (see Theorem \ref{wpCE} and Remark \ref{rmkclop}),  we make more explicit the representation of the weak solutions of \eqref{CPCE}, under the assumptions of 
Theorem~\ref{thm5eb900db} and Theorem~\ref{thmbcpcsupp}. It is useful to deal with the case  where $\rho(t,\cdot)$ belongs to the space of signed Borel measures on $\R^n$ with finite total variation, which we will denote by $\mathcal M(\R^n)$.

\begin{thm}[Representation of the solutions for the continuity equation]\label{solCE} 
Let $I=(0,T)$, let $b:I\times\R^n\to\R^n$ be a vector field with $b(t,\cdot)$ continuous for a.e.~$t\in I$.
\begin{itemize}
\item [(i)] Suppose that $b$ satisfies the assumptions of Theorem~\ref{thm5eb900db}  and let $X$ be the flow associated to $b$. Then, for each signed measure $\bar\rho\in \mathcal M(\R^n)$, 
\begin{equation}\label{solCPCEmeas}
\rho_t=\,\rho(t,\cdot)=\,X(t,0,\cdot)_\#\bar\rho \quad t\in [0,T]
\end{equation}
is the unique weak solution for the Cauchy problem \eqref{CPCE} in $L^\infty((0,T);\mathcal M(\R^n))$.
		
Moreover, if $\bar\rho\in L^1(\R^n)$, $\rho_t$ can be represented as 
\begin{equation}\label{eq6033caa8}
\rho_t:= (\bar\rho J_{X,t}) \circ X(0,t,\cdot)\, ,
\end{equation}
where $J_{X,t}(y) = \frac{\dd X(t,0,\cdot)_\#\mathcal L^n }{\dd \mathcal L^n}(X(t,0,y))$.
In particular we have $\rho\in L^\infty((0,T);L^1(\R^n))$.
\item[(ii)] If $b$ satisfies the stronger hypotheses of Theorem~\ref{thmbcpcsupp} and $\bar\rho\in L^1(\R^n)$, then  
 the unique  weak solution for  the Cauchy problem ~\eqref{CPCE} can also be represented as
\begin{equation}\label{reprsolCE}
\rho_t=\,\frac{\bar\rho}{J_{X(t,0,\cdot)}}\circ X(0,t,\cdot)  \,,
\end{equation}
\end{itemize}
where $J_{X(t,0,\cdot)}$  denotes the Jacobian of homeomorphism $X(t,0,\cdot):\,\R^n\to\R^n$ (see \eqref{jacob}).
\end{thm}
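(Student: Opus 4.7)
The plan is to deduce the pushforward representation $\rho_t = X(t,0,\cdot)_\#\bar\rho$ from the abstract well-posedness of the continuity equation recalled in Theorem~\ref{wpCE} (from~\cite{MR2439520,MR3906270}), and then to upgrade it to the explicit density formulas \eqref{eq6033caa8} and \eqref{reprsolCE} using, respectively, the absolute continuity of $X(t,0,\cdot)_\#\Lbm^n$ granted by Theorem~\ref{thm5eb900db} and the Sobolev regularity of $X(t,0,\cdot)$ granted by Theorem~\ref{thmbcpcsupp}. In both parts, the hypotheses of Theorem~\ref{wpCE} are verified because Theorems~\ref{thm5eb900db} and~\ref{thmbcpcsupp} provide a classical, globally defined flow together with $b\in \Lbs^1_\loc(I;W^{1,1}_\loc)$ and the sub-linear growth control coming from \eqref{globexsistb} (respectively, from the compact-support assumption \eqref{eq63eb50aa} combined with the boundedness of $b$). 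This already yields existence, uniqueness in $L^\infty((0,T);\mathcal M(\R^n))$, and the identity $\rho_t = X(t,0,\cdot)_\#\bar\rho$, proving \eqref{solCPCEmeas} and the pushforward part of (ii).

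For the density formula in part~(i) with $\bar\rho\in \Lbs^1(\R^n)$, I would first invoke Theorem~\ref{thm5eb900db} to get $X(t,0,\cdot)_\#\Lbm^n\ll\Lbm^n$ with density $h_t$; equivalently, the inverse map $X(0,t,\cdot)$ satisfies the Lusin~(N) condition, hence $\rho_t = X(t,0,\cdot)_\#(\bar\rho\Lbm^n)\ll\Lbm^n$, and mass conservation $\|\rho_t\|_{\Lbs^1} = \|\bar\rho\|_{\Lbs^1}$ gives $\rho\in L^\infty((0,T);\Lbs^1(\R^n))$. Testing the defining identity $\int g\,h_t\dd x = \int g(X(t,0,y))\dd y$ against $g(x):= f(x)\bar\rho(X(0,t,x))$ produces
\[
\int f(x)\rho_t(x)\dd x = \int f(X(t,0,y))\bar\rho(y)\dd y = \int f(x)\bar\rho(X(0,t,x))h_t(x)\dd x
\]
for every bounded Borel $f$, where I have used the flow identity $X(0,t,X(t,0,y))=y$ to simplify $\bar\rho(X(0,t,X(t,0,y)))=\bar\rho(y)$. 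Since $h_t(x) = J_{X,t}(X(0,t,x))$ by the very definition of $J_{X,t}$ together with $X(t,0,X(0,t,x))=x$, this is precisely formula \eqref{eq6033caa8}.

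For part~(ii), the stronger hypotheses of Theorem~\ref{thmbcpcsupp} provide $X(t,0,\cdot)\in W^{1,p}(\Omega;\R^n)$ with $p>n$. By the theory of Sobolev homeomorphisms in this super-critical range (see~\cite{MR3184742}), such a map satisfies both Lusin~(N) and Lusin~(N$^{-1}$), admits a well-defined and almost everywhere positive pointwise Jacobian $J_{X(t,0,\cdot)}$, and obeys the classical change-of-variables identity $\int \varphi(X(t,0,y))J_{X(t,0,\cdot)}(y)\dd y = \int \varphi(x)\dd x$. Specializing this to $\varphi(x) := f(x)\bar\rho(X(0,t,x))/J_{X(t,0,\cdot)}(X(0,t,x))$ and again using the flow identity $X(0,t,X(t,0,y))=y$ transforms $\int f(X(t,0,y))\bar\rho(y)\dd y$ into $\int f(x)[\bar\rho/J_{X(t,0,\cdot)}]\circ X(0,t,\cdot)(x)\dd x$, which identifies $\rho_t$ with the right-hand side of \eqref{reprsolCE}. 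The main obstacle I expect is checking that the Orlicz-exponential control on $D_xb$ required by Theorem~\ref{thm5eb900db} indeed matches the integrability assumptions of Theorem~\ref{wpCE} (so that the abstract uniqueness really applies to our $\rho_t$); by contrast, the Sobolev change of variables used in (ii) is standard once $p>n$ is in force, and the flow-identity bookkeeping in (i) is purely algebraic.
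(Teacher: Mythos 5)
Your overall architecture is the same as the paper's: uniqueness and the pushforward representation \eqref{solCPCEmeas} via Theorem~\ref{wpCE} (with Remark~\ref{rmkclop}), formula \eqref{eq6033caa8} via the absolute continuity of $X(t,0,\cdot)_\#\Lbm^n$ from Theorem~\ref{thm5eb900db} (your computation testing against $g=f\cdot(\bar\rho\circ X(0,t,\cdot))$ is exactly what the paper packages as Lemma~\ref{represpfmeas}), and \eqref{reprsolCE} via the change of variables for the $W^{1,p}$, $p>n$, flow (Corollary~\ref{cor5ec7b067}). However, there is a genuine gap at the very first step: the hypotheses of Theorem~\ref{wpCE} are not ``a classical, globally defined flow plus $b\in \Lbs^1_\loc(I;W^{1,1}_\loc)$ and sub-linear growth''. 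They are the quantitative conditions \eqref{B1} and \eqref{B2}, i.e.\ an Osgood-type growth bound \emph{and} a two-sided Osgood modulus of continuity of $b(t,\cdot)$ with time coefficient in $L^\infty(0,T)$ (or $L^1(0,T)$ by Remark~\ref{rmkclop}, but then only within the class $L^\infty((0,T);\mathcal M(\R^n))$). Mere well-posedness of the characteristic ODE is not enough: uniqueness of characteristics suffices for nonnegative measure solutions, but for signed $\bar\rho$ the quantitative condition \eqref{B2} is precisely what \cite{MR2439520,MR3906270} require, and this is the step you yourself flag as ``the main obstacle'' without carrying it out. The paper fills it by taking $G(s)=s\log^+s$, so that \eqref{B1} (in $L^1$ form) follows from \eqref{globexsistb}, and by re-running the proof of Theorem~\ref{thm5e999915} with $\Theta(s)=\exp(s/\log s)$: that proof yields $|b(t,x)-b(t,y)|\le\varphi(t)\,\omega(|x-y|)$ on balls, with $\omega$ as in \eqref{modcontomega} and $\varphi$ as in \eqref{functmodcontomega}, and \eqref{eq5eb95263} guarantees $\varphi\in L^1(0,T)$, which is \eqref{B2} in the weakened form of Remark~\ref{rmkclop}. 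Until this verification is made, the uniqueness claim in \eqref{solCPCEmeas} is unproved.

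Two smaller points in part (ii). The properties you attribute to ``the theory of Sobolev homeomorphisms with $p>n$'' --- almost everywhere nonvanishing Jacobian, Lusin (N$^{-1}$), and the change-of-variables identity with equality --- do not hold for a single $W^{1,p}$ homeomorphism in general; here they hold because the inverse $X(0,t,\cdot)$ is also $W^{1,p}$ with $p>n$ (Theorem~\ref{thmbcpcsupp}), so one argues via Lemmas~\ref{lem5e9a1230} and~\ref{lem5e9a12a0} and Corollaries~\ref{cor5ec7b067} and~\ref{cor5edfe500}, which is how the paper proceeds. Finally, to invoke part (i) under the hypotheses of Theorem~\ref{thmbcpcsupp} you should state explicitly that boundedness, the compact spatial support \eqref{eq63eb50aa} and the exponential summability \eqref{eq5eb965b3} imply both \eqref{globexsistb} and \eqref{eq5eb95263}; you check the growth condition but not the Gaussian-weighted one, which the paper records with the remark that ``the assumptions of Theorem~\ref{thm5eb900db} are still satisfied.''
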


Notice that existence and uniqueness of distributional solutions of continuity equation \eqref{CPCE} has been obtained in \cite{MR3479536} by weakening assumption \eqref{divbbd}.
More precisely, the authors showed that if $\bar\rho\in L^\infty(\R)$ and
\begin{equation}\label{eq5fc0c25e}
\div_x b \in (BMO\cap L^1)(\R^n),
\end{equation}
then there exists a unique distributional solution $\rho\in L^\infty((0,T)\times\R^n)$ of the Cauchy problem \eqref{CPCE}
However, we point out that condition \eqref{eq5eb965b3} does not imply \eqref{eq5fc0c25e}, see Example~\ref{sec5ec2a1ee}.

\subsection*{Structure of the paper}
We start in Sections~\ref{sec5edff236} and~\ref{sec5edff28b} by listing known results about homeomorphisms of finite distortion
and classical flows of vector fields.
In Section~\ref{sec5edff2b1} we prove Theorem~\ref{thm5e999915}. Theorems~\ref{thm5eb900db} and~\ref{thm5ea9862b} are proven in Section~\ref{sec5edfede8}. 
The proof of Theorem~\ref{thm5e9a10ec} is given in Section~\ref{sec5edff316}.
Theorems~\ref{solTE} and~\ref{solCE} are shown in Section~\ref{sec5edff3f5}.
Finally,  Section~\ref{sec5edff41e} is devoted to the illustration of a few examples.

\subsection*{Acknowledgements} 
We thank A.~Clop for useful suggestions and discussions on this topic. We are also grateful to R.~Serapioni for several discussions about Section~\ref{sec5edff2b1}. 
We want to thank also the anonymous referee for their useful comments and corrections.

L.A.~and F.S.C.~have been supported by the PRIN 2017 project ``Gradient flows, Optimal Transport and Metric Measure Structures.''
S.N.G.~has been supported
	by University of Padova STARS Project ``Sub-Riemannian Geometry and Geometric Measure Theory Issues: Old and New''
	and by the Academy of Finland (%
grant 328846 ``Singular integrals, harmonic functions, and boundary regularity in Heisenberg groups'',
grant 322898 ``Sub-Riemannian Geometry via  Metric-geometry and Lie-group Theory'',
grant 314172 ``Quantitative rectifiability in Euclidean and non-Euclidean spaces'').
	S.N.G.~and F.S.C.~have been supported 
	by the INdAM – GNAMPA Project 2019 ``Rectifiability in Carnot groups''.

\section{Preliminaries on homeomorphisms}\label{sec5edff236}

In this section we present some results about homeomorphisms that we will need later.
Most of the material comes from the book~\cite{MR3184742}.

\subsection{Weak derivatives of homeomorphisms}

If $\Omega\subset\R^n$ is open and $\Psi:\Omega\to\R^n$, we denote by $D\Psi$ the weak differential of $\Psi$.
When a point-wise analysis is needed, we assume that $D\Psi(x)$ is equal to the classical differential of $\Psi$ at every $x\in\Omega$ where $\Psi$ is differentiable.\footnote{Notice that 
the distributional derivative of a Sobolev function $\Psi$ agrees with the classical derivative at a.e.~$x$ where $\Psi$ is differentiable.
If the reader needs an argument, let us point out that the approximate differential of $\Psi$ agrees with the distributional derivative almost everywhere by~\cite[Theorem~6.1.4, page~233]{MR3409135}, and that the classical derivative is equal to the approximate differential wherever it exists.}
The \emph{Jacobian} of $\Psi$ is  
\begin{equation}\label{jacob}
J_\Psi(x) = \det(D\Psi(x))\,.
\end{equation}
A map $\Psi:\Omega\to\R^n$ satisfies the \emph{Lusin (N) condition} if for all $E\subset\Omega$ the vanishing of $\Lbm^n(E)$ implies $\Lbm^n(\Psi(E))=0$.
An \emph{embedding} is a homeomorphism onto its image.

\begin{lemma}[{\cite[Lemma A.29]{MR3184742}}]\label{lem5e9a12a0}
	Let $\Omega\subset\R^n$ be open  and let $\Psi:\Omega\to\R^n$ be an embedding that is differentiable at $x$.
	The map $\Psi^{-1}$ is differentiable at $\Psi(x)$ if and only if $\det(D\Psi(x))\neq0$.
	In this case, we have
	\[
	D(\Psi^{-1})(\Psi(x)) = (D\Psi(x))^{-1} .
	\]
\end{lemma}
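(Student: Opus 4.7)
The plan is to handle the two directions of the biconditional separately. Set $y:=\Psi(x)$ and $A:=D\Psi(x)$, so that differentiability of $\Psi$ at $x$ gives the first-order expansion
\[
\Psi(x+h)-y = Ah + r(h) \quad\text{with}\quad |r(h)|/|h|\to 0 \text{ as } h\to 0.
\]

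For the easy ``only if'' direction, suppose that $\Psi^{-1}$ is differentiable at $y$ with derivative $B$. Since $\Psi$ is differentiable at $x$ and $\Psi^{-1}\circ\Psi$ coincides with the identity on a neighbourhood of $x$ (by the embedding property), the chain rule for pointwise derivatives of compositions yields $BA=I_n$, forcing $\det A\neq 0$ and $B=A^{-1}$.

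For the harder ``if'' direction, assume that $A$ is invertible. For $z$ sufficiently close to $y$ in $\Psi(\Omega)$, let $h=h(z)$ be the unique small vector with $\Psi(x+h)=z$, and set $k:=z-y$; the map $z\mapsto h(z)$ is continuous at $y$ with $h(y)=0$ precisely because $\Psi$ is an embedding. From $k=Ah+r(h)$ together with the lower bound $|Ah|\ge \|A^{-1}\|^{-1}|h|$, the smallness $|r(h)|\le\tfrac{1}{2\|A^{-1}\|}|h|$ (valid for $|h|<\delta$, some $\delta>0$) yields the crucial comparability
\[
|h|\le 2\|A^{-1}\|\,|k|,
\]
first for $|h|<\delta$ and then, by continuity of $h(\cdot)$, for all $k$ with $|k|<\eta$ and some $\eta>0$. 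Rearranging $h=A^{-1}k-A^{-1}r(h)$ produces
\[
\bigl|\Psi^{-1}(y+k)-\Psi^{-1}(y)-A^{-1}k\bigr|\le \|A^{-1}\|\,|r(h)|\le 2\|A^{-1}\|^{2}\,\frac{|r(h)|}{|h|}\,|k|,
\]
and the right-hand side is $o(|k|)$ as $k\to 0$, since $h\to 0$ forces $|r(h)|/|h|\to 0$. This gives differentiability of $\Psi^{-1}$ at $y$ with derivative $A^{-1}$.

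The only real obstacle is establishing the bound $|h|\lesssim |k|$ for small $k$: since $\Psi$ is not assumed to be $C^{1}$ on any neighbourhood, the classical inverse function theorem is not available, and one must combine the one-point differentiability expansion at $x$ (which controls $|h|$ in terms of $|k|$ only once $|h|$ is already small) with the continuity of $\Psi^{-1}$ at $y$ provided by the embedding hypothesis (which promotes smallness of $|k|$ to smallness of $|h|$).
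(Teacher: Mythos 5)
Your proof is correct: both directions are handled properly, and the key step—upgrading the one-point expansion to the bound $|h|\le 2\|A^{-1}\|\,|k|$ via continuity of $\Psi^{-1}$ at $\Psi(x)$ (the only place the embedding hypothesis is needed)—is exactly where the difficulty lies, and you resolve it correctly. The paper does not prove this lemma at all but cites it from the reference on mappings of finite distortion (Lemma A.29 there), and your argument is essentially the standard proof of that quoted result, so there is nothing to compare beyond noting agreement.
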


\begin{lemma}[{\cite[Theorem A.35]{MR3184742}}: Area Formula]\label{lem04031003}
	If $\Omega\subset\R^n$ is open, $\Psi\in W^{1,1}_\loc(\Omega;\R^n)$ and $\eta:\R^n\to[0,+\infty]$ is a Borel function, then 
	\begin{equation}\label{eq03161203}
	\int_\Omega \eta(\Psi(x)) |J_\Psi(x)| \dd x \le \int_{\R^n} \eta(y) N(\Psi,\Omega,y) \dd y ,
	\end{equation}
	where $ N(\Psi,\Omega,y) = \# (\Psi^{-1}(y)\cap\Omega)$ is the cardinality of the set $\Psi^{-1}(y)\cap\Omega$.
	If $\Psi$ satisfies also the Lusin (N) condition then equality holds in~\eqref{eq03161203}.
\end{lemma}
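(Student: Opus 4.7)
The plan is to reduce the statement to the case $\eta = \chi_E$ for a Borel set $E \subset \R^n$ and then use a countable Lipschitz decomposition of $\Psi$. By monotone convergence and the approximation of nonnegative Borel functions by simple functions, it suffices to prove both the inequality \eqref{eq03161203} and the equality claim (under Lusin (N)) for characteristic functions of Borel sets $E$.

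The heart of the argument is the following standard Lipschitz decomposition for Sobolev maps (a consequence of approximate differentiability plus a Whitney/Calder\'on--Zygmund extension, as in \cite[Ch.~6]{MR3409135}): there exist pairwise disjoint measurable sets $F_k \subset \Omega$ and Lipschitz maps $g_k : \R^n \to \R^n$ with
\[
\Lbm^n\Bigl(\Omega \setminus \bigcup_k F_k\Bigr) = 0,
\qquad
\Psi|_{F_k} = g_k|_{F_k},
\qquad
D\Psi(x) = Dg_k(x) \text{ for a.e.\ } x\in F_k.
\]
The last identity ensures $J_\Psi = J_{g_k}$ a.e.\ on $F_k$, which is the key compatibility needed to pass from the Sobolev Jacobian to the classical Lipschitz Jacobian.

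Applying the classical area formula for Lipschitz maps (which, for an injective restriction of a Lipschitz map, is an equality) to each $g_k$ on $F_k$ and summing gives
\[
\int_{\bigcup_k F_k} \chi_E(\Psi(x)) |J_\Psi(x)|\dd x
= \sum_k \int_{F_k} \chi_E(g_k(x)) |J_{g_k}(x)| \dd x
= \int_E \sum_k N(\Psi, F_k, y) \dd y.
\]
The left-hand side equals $\int_\Omega \chi_E(\Psi(x)) |J_\Psi(x)| \dd x$, since $\Omega \setminus \bigcup_k F_k$ is $\Lbm^n$-null. On the other hand, because the $F_k$ are disjoint subsets of $\Omega$,
\[
\sum_k N(\Psi, F_k, y) = \#\Bigl(\Psi^{-1}(y) \cap \bigcup_k F_k\Bigr) \le N(\Psi, \Omega, y)
\qquad \text{for every } y \in \R^n,
\]
which yields \eqref{eq03161203}.

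For the equality under Lusin (N): the only missing preimages on the right are those lying in the null set $Z := \Omega \setminus \bigcup_k F_k$. Lusin (N) gives $\Lbm^n(\Psi(Z)) = 0$, so $N(\Psi, Z, y) = 0$ for a.e.\ $y \in \R^n$, and therefore $\sum_k N(\Psi, F_k, y) = N(\Psi, \Omega, y)$ almost everywhere. Equality in \eqref{eq03161203} follows. The main technical obstacle is the Lipschitz decomposition step, where one must verify not merely pointwise agreement $\Psi = g_k$ on $F_k$ but also equality of classical (hence weak) derivatives almost everywhere on $F_k$; this is where the approximate-differentiability theory of Sobolev maps is essential, and it is precisely the content invoked from \cite[Appendix A]{MR3184742}.
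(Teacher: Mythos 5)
Your proof is correct. Note that the paper does not prove this lemma at all: it is quoted directly from \cite[Theorem A.35]{MR3184742}, and your argument---reduction to characteristic functions by monotone convergence, the countable Lipschitz decomposition furnished by a.e.\ approximate differentiability of $W^{1,1}_\loc$ maps, the Lipschitz area formula with multiplicity on each piece $F_k$, and Lusin (N) to discard the exceptional null set $Z$---is exactly the standard proof of that cited result, so there is nothing of substance to compare. One small remark: the parenthetical ``for an injective restriction of a Lipschitz map'' is unnecessary (and slightly misleading), since the Lipschitz area formula with the multiplicity function $N(g_k,F_k,\cdot)$ is an equality with no injectivity assumption, and that multiplicity version is the one you actually use.
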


\begin{remark}  Observe that  the Lusin (N) condition for a homeomorphism $\Psi:\Omega\to\Omega'$  is equivalent to assume that the push-forward measure $(\Psi^{-1})_\#(\mathcal L^n\res\Omega')$ is absolutely continuous with respect to  $\mathcal L^n$.  
An abstract area-type formula with respect to a general measure $\mu$ is also introduced in Lemma \ref{represpfmeas}.
\end{remark}

\begin{lemma}[{\cite[Theorem 4.2 and Theorem 6.2.1]{MR3184742}}]\label{lem5e9a1230}
	If $\Omega\subset\R^n$ is open, $\Psi\in W^{1,p}_\loc(\Omega;\R^n)$ and $p>n$, then  the continuous
	representative $\tilde{\Psi}$ is differentiable at 
	a.e.~$x\in\Omega$ and $\tilde\Psi$ satisfies the Lusin (N) condition.
\end{lemma}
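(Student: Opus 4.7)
The plan is to derive both assertions from Morrey's embedding $W^{1,p}_\loc(\Omega;\R^n)\hookrightarrow C^{0,1-n/p}_\loc(\Omega;\R^n)$, which in its quantitative form yields, for every ball $B(x,2r)\subset\Omega$ and every $y\in B(x,r)$,
\[
|\tilde\Psi(y)-\tilde\Psi(x)|\le C_{n,p}\,r^{1-n/p}\left(\int_{B(x,2r)}\|D\Psi(z)\|^p \dd z\right)^{1/p}.
\]
Both conclusions will then follow by combining this inequality with the Lebesgue differentiation theorem applied to $\|D\Psi\|^p\in L^1_\loc(\Omega)$.

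For the almost-everywhere differentiability, I would fix a point $x_0\in\Omega$ at which $D\Psi$ has its Lebesgue value in the $L^p$ sense, that is, $\ave_{B(x_0,r)}\|D\Psi(z)-D\Psi(x_0)\|^p \dd z\to 0$ as $r\downarrow 0$; such points form a set of full measure. Applying the Morrey inequality above to the Sobolev map $g(y):=\tilde\Psi(y)-\tilde\Psi(x_0)-D\Psi(x_0)(y-x_0)$, whose weak differential is $D\Psi-D\Psi(x_0)$, on the ball $B(x_0,2|y-x_0|)$ gives
\[
\frac{|g(y)|}{|y-x_0|}\le C_{n,p}\left(\ave_{B(x_0,2|y-x_0|)}\|D\Psi(z)-D\Psi(x_0)\|^p \dd z\right)^{1/p}\longrightarrow 0,
\]
which is precisely classical differentiability of $\tilde\Psi$ at $x_0$ with differential $D\Psi(x_0)$.

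For the Lusin (N) condition, let $E\subset\Omega$ satisfy $\Lbm^n(E)=0$ and fix $\varepsilon>0$. Choose an open set $U$ with $E\subset U\Subset\Omega$ and $\Lbm^n(U)<\varepsilon$. By a Vitali-type covering argument I extract a countable family of disjoint balls $B_i=B(x_i,r_i)$ covering $E$ up to a negligible set, such that the dilates $5B_i$ lie inside a fixed neighbourhood $U'$ of $U$ and have uniformly bounded overlap. Morrey's inequality applied to each $B_i$ gives $\diam\tilde\Psi(B_i)\le C\,r_i^{1-n/p}\|D\Psi\|_{L^p(5B_i)}$, whence $\Lbm^n(\tilde\Psi(B_i))\le c_n r_i^{n-n^2/p}\|D\Psi\|_{L^p(5B_i)}^n$. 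The algebraic identity $(n-n^2/p)\cdot p/(p-n)=n$ then makes Hölder's inequality with conjugate exponents $p/(p-n)$ and $p/n$ applicable to the sum, yielding
\[
\Lbm^n(\tilde\Psi(E))\le c_n\left(\sum_i r_i^n\right)^{(p-n)/p}\left(\sum_i\|D\Psi\|_{L^p(5B_i)}^p\right)^{n/p}\le C\,\varepsilon^{(p-n)/p}\,\|D\Psi\|_{L^p(U')}^n,
\]
which tends to zero as $\varepsilon\downarrow 0$.

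The main obstacle is the control of the overlap of the inflated balls $5B_i$: this is where one must invoke a Besicovitch or $5r$-covering lemma to trade the disjointness of the $B_i$ for bounded multiplicity of the dilates, so that $\sum_i\|D\Psi\|_{L^p(5B_i)}^p$ stays bounded by $C\|D\Psi\|_{L^p(U')}^p$. Once this is arranged, the remainder is a routine application of Morrey's embedding, Hölder's inequality, and the Lebesgue differentiation theorem.
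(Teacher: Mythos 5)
The paper does not prove this lemma at all: it is imported verbatim from Hencl--Koskela \cite{MR3184742}, so your proposal should be measured against the standard textbook arguments. Your first half is fine: applying the Morrey estimate on $B(x_0,2|y-x_0|)$ to $g(y)=\tilde\Psi(y)-\tilde\Psi(x_0)-D\Psi(x_0)(y-x_0)$ at an $L^p$-Lebesgue point of $D\Psi$ is exactly the classical proof of a.e.\ differentiability for $W^{1,p}_\loc$ with $p>n$.

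The Lusin (N) half, however, has a genuine gap at the covering step, precisely at the point you flag. First, the measure-theoretic Vitali theorem only covers $E$ up to a Lebesgue-null set $N$, and you cannot discard $\tilde\Psi(N)$: that $\tilde\Psi$ maps null sets to null sets is exactly the statement being proved, so this reduction is circular; you must cover \emph{all} of $E$. Second, neither the $5r$-covering lemma nor Besicovitch gives bounded multiplicity of the dilates $5B_i$ of a disjoint family: when the radii range over all scales, one point can lie in infinitely many dilates (e.g.\ the disjoint balls $B(x_k,2^{-k})$ with $|x_k|=3\cdot 2^{-k}$ all satisfy $0\in 5B_k$), so $\sum_i\|D\Psi\|_{L^p(5B_i)}^p$ is not controlled by $C\|D\Psi\|_{L^p(U')}^p$ and the Hölder step collapses. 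The standard repair removes the dilates altogether: the Morrey oscillation estimate is valid on the ball (or cube) itself, $\operatorname{osc}_{B(x,r)}\tilde\Psi\le C_{n,p}\,r^{1-n/p}\|D\Psi\|_{L^p(B(x,r))}$, since balls and cubes are convex extension domains and the constant is scale-invariant. With that version, cover $E$ either by the Whitney cubes of an open set $U\supset E$, $U\Subset\Omega$, $\Lbm^n(U)<\varepsilon$ (pairwise non-overlapping, union equal to $U$), or by a Besicovitch family of balls centered at points of $E$ and contained in $U$ (bounded overlap of the balls themselves); then your Hölder computation with exponents $p/(p-n)$ and $p/n$ goes through verbatim and yields $\Lbm^n(\tilde\Psi(E))\le C\varepsilon^{(p-n)/p}\|D\Psi\|_{L^p(U)}^n\to 0$.
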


\subsection{Mappings of finite distortion}

We denote by $\|M\|$ the operator norm of a linear map  $\R^n\to\R^n$ or $m\times n$-matrix, that is, 
\[
\|M\|:= \sup \left\{|Mv|: v\in\R^n,\ |v|=1 \right\} ,
\]
where $|\cdot|$ is the Euclidean norm. With this choice of the norm,
one can easily show the following inequalities (the first one is called \emph{Hadamard's inequality}):
\begin{equation}\label{eq03161234}
|\det M| \le \Pi_{i=1}^n |Me_i| \le \|M\|^n  .
\end{equation}

\begin{definition}
	A \emph{map of finite distortion} on an open set $\Omega\subset\R^n$ is a function $\Psi\in W^{1,1}_\loc(\Omega;\R^n)$ such that there exists $K:\Omega\to[1,+\infty)$ with
	\begin{equation}\label{eq03161220}
	\|D\Psi(x)\|^n \le K(x) J_\Psi(x)
	\qquad\text{for a.e.~}x\in\Omega.
	\end{equation}
	For $q\ge1$, the \emph{$q$-distortion function} of a map of finite distortion $\Psi$ is
	\[
	K_q^\Psi(x) := 
	\begin{cases}
	 	\displaystyle{\frac{\|D\Psi(x)\|^q}{J_\Psi(x)}} & 	\text{ if }J_\Psi(x) \neq 0 , \\
		1&\text{ otherwise }.
	\end{cases}
	\]
\end{definition}
Our main reference on maps of finite distortion is~\cite{MR3184742}.
Notice that $K_n^\Psi$ is the optimal distortion function for the inequality~\eqref{eq03161220} to hold. 

\begin{lemma}\label{lem5ec7e29c}
	If $\Omega\subset\R^n$ is open and $\Psi\in W^{1,1}_\loc(\Omega;\R^n)$ is an embedding with $J_\Psi > 0$ almost everywhere in $\Omega$, 
	then $\Psi$ has finite distortion.
\end{lemma}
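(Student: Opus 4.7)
The plan is to observe that the statement is essentially immediate from Hadamard's inequality, the second half of~\eqref{eq03161234}. Since $\Psi\in W^{1,1}_\loc(\Omega;\R^n)$, the Jacobian matrix $D\Psi(x)$ is well-defined at almost every $x\in\Omega$, and at each such point Hadamard's inequality gives
\[
|J_\Psi(x)| = |\det D\Psi(x)|\le \|D\Psi(x)\|^n.
\]
The hypothesis $J_\Psi(x)>0$ for a.e.~$x$ lets one drop the absolute value on a set of full measure, so $0<J_\Psi(x)\le \|D\Psi(x)\|^n$ almost everywhere in $\Omega$.

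The construction of the distortion function is then essentially forced. I would set
\[
K(x) :=
\begin{cases}
\|D\Psi(x)\|^n / J_\Psi(x) & \text{if } J_\Psi(x) > 0 \text{ and } D\Psi(x) \text{ is defined},\\
1 & \text{otherwise}.
\end{cases}
\]
Because $\|D\Psi\|$ is finite almost everywhere (as $\Psi\in W^{1,1}_\loc$) and $J_\Psi>0$ almost everywhere (by hypothesis), the first branch is finite on a set of full measure; after a harmless modification on a null set one obtains a function $K:\Omega\to[1,+\infty)$. The estimate of the first paragraph yields $K\ge 1$ pointwise, while $\|D\Psi(x)\|^n\le K(x)\,J_\Psi(x)$ holds for a.e.~$x\in\Omega$ by construction, verifying \eqref{eq03161220} and hence the definition of finite distortion.

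I do not anticipate any real obstacle: the content of the lemma is a rearrangement of the definition, and the assumption $J_\Psi>0$ a.e.~is precisely what guarantees that the natural candidate distortion is both finite a.e.~and bounded below by $1$. The embedding hypothesis is not used explicitly in this argument — its role is rather contextual, since in the intended applications (flows of Sobolev vector fields, where this lemma will be combined with Theorems~\ref{thm5eb900db} and~\ref{thm5e9a10ec}) the geometric rigidity of a homeomorphism is what makes the sign condition $J_\Psi>0$ a.e.~a natural assumption to verify in the first place.
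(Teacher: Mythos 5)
Your proof is correct and is essentially the paper's own argument: both take the canonical candidate $K=\|D\Psi\|^n/J_\Psi$ (i.e.\ the $n$-distortion function $K^\Psi_n$), note it is finite and $\ge 1$ almost everywhere since $J_\Psi>0$ a.e.\ and Hadamard's inequality \eqref{eq03161234} holds, and observe that \eqref{eq03161220} then holds by construction. Your remark that the embedding hypothesis is not needed for this particular verification is also consistent with the paper's proof, which does not use it either.
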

\begin{proof}
	The function $K^\Psi_n$ is finite almost everywhere and~\eqref{eq03161220} holds whenever $J_\Psi(x)>0$.
	Since $J_\Psi>0$ almost everywhere in $\Omega$, then $\Psi$ has finite distortion.
\end{proof}

\begin{lemma}\label{lem5ec7d831}
	Let $\Psi:\Omega_1\to\Omega_2$ be a homeomorphism between open subsets of $\R^n$,
	$q>0$ and $p>n$.
	Suppose that 
	\[
	\Psi\in W^{1,p}_\loc(\Omega;\R^n), \quad\Psi^{-1}\in W^{1,p}_\loc(\Psi(\Omega);\R^n)
	\quad\text{and }
	J_\Psi(x)> 0\text{ for a.e.~}x\in\Omega .
	\]
	Then $\Psi$ is a homeomorphism of finite distortion and 
	\[
	K^\Psi_q\in \Lbs^{r}_\loc(\Omega) 
	\]
	with $r = \left( \frac{q}{p} + \frac{n}{p-n} \right)^{-1} $, which may be smaller than 1.
\end{lemma}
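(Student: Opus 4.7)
The first assertion, that $\Psi$ has finite distortion, is immediate from Lemma~\ref{lem5ec7e29c}: $\Psi\in W^{1,p}_\loc\subset W^{1,1}_\loc$ is an embedding with $J_\Psi>0$ almost everywhere. The plan for the integrability of $K^\Psi_q$ is to establish a pointwise bound that expresses $1/J_\Psi$ as a power of $\|D\Psi^{-1}\|\circ\Psi$ via Hadamard's inequality, and then balance the $L^p$ integrability of $D\Psi$ against that of $D\Psi^{-1}$ by means of a single H\"older step combined with a change of variable.

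For the pointwise bound, since $p>n$, Lemma~\ref{lem5e9a1230} makes both $\Psi$ and $\Psi^{-1}$ differentiable almost everywhere and forces the Lusin (N) condition on each; coupled with $J_\Psi>0$ a.e., Lemma~\ref{lem5e9a12a0} then gives $D\Psi^{-1}(\Psi(x))=D\Psi(x)^{-1}$ at almost every $x\in\Omega_1$. Applying Hadamard's inequality \eqref{eq03161234} to $D\Psi(x)^{-1}$ yields $J_\Psi(x)^{-1}=|\det D\Psi(x)^{-1}|\le \|D\Psi^{-1}(\Psi(x))\|^n$, whence
\[
K_q^\Psi(x) \le \|D\Psi(x)\|^q\, \|D\Psi^{-1}(\Psi(x))\|^n \qquad\text{for a.e.\ } x\in \Omega_1.
\]

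Next I would fix a compact $K\subset \Omega_1$, so that $\Psi(K)\subset\Omega_2$ is compact, raise the previous bound to the power $r$, and apply H\"older's inequality with conjugate exponents $a=p/(qr)$ and $a'=p/(p-qr)$. The first factor $\bigl(\int_K \|D\Psi\|^p\dd x\bigr)^{qr/p}$ is finite by hypothesis. For the second factor $\int_K \|D\Psi^{-1}(\Psi(x))\|^{nra'}\dd x$, I would pass to the variable $y=\Psi(x)$ via the area formula for $\Psi^{-1}$ (Lemma~\ref{lem04031003}, equality thanks to Lusin (N)), obtaining $\int_{\Psi(K)} \|D\Psi^{-1}(y)\|^{nra'} J_{\Psi^{-1}}(y)\dd y$, and then invoke Hadamard again on $\Psi^{-1}$ to absorb the Jacobian weight via $J_{\Psi^{-1}}\le\|D\Psi^{-1}\|^n$. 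The resulting integral $\int_{\Psi(K)}\|D\Psi^{-1}\|^{nra'+n}\dd y$ is finite provided $nra'+n\le p$, and imposing equality here singles out the optimal exponent: solving $nrp/(p-qr)=p-n$ produces exactly $r=p(p-n)/[np+q(p-n)]=(q/p+n/(p-n))^{-1}$.

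The main obstacle, really the only delicate point, is this double use of Hadamard's inequality: once to convert $1/J_\Psi$ into a power of $\|D\Psi^{-1}\|$, and once again to absorb $J_{\Psi^{-1}}$ after the change of variable; these two invocations are what rigidly force the H\"older algebra. A minor bookkeeping check is that the exponent $a=p/(qr)$ is admissible, i.e.\ $qr<p$, which using the explicit formula for $r$ reduces to $0<np$ and is therefore automatic.
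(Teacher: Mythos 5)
Your proof is correct and follows essentially the same route as the paper's: finite distortion via Lemma~\ref{lem5ec7e29c}, the a.e.\ identity $J_\Psi^{-1}=|J_{\Psi^{-1}}\circ\Psi|$ from Lemmas~\ref{lem5e9a1230} and~\ref{lem5e9a12a0}, a H\"older step tuned so one factor is $\int\|D\Psi\|^p$, a change of variables by the area formula, and Hadamard's inequality to reduce everything to $\int_{\Psi(K)}\|D\Psi^{-1}\|^{p}$, yielding the same exponent $r$. The only difference is cosmetic ordering (you apply Hadamard pointwise before H\"older and use the area formula for $\Psi^{-1}$ with equality via Lusin (N), while the paper keeps $|J_\Psi|^{-r}$ in the H\"older step and only needs the area inequality for $\Psi$), which does not change the substance.
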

\begin{proof}
	The map $\Psi$ has finite distortion by Lemma~\ref{lem5ec7e29c}.
	Let $r>0$ and $U\Subset\Omega$.
	By Hölder inequality, we have
	\begin{equation}\label{eq5ea99bce}
	\int_U|K^\Psi_q|^r \dd x
	\le \left( \int_U \|D\Psi\|^{rq\alpha} \dd x \right)^{1/\alpha}
		\left( \int_U |J\Psi|^{-r\beta} \dd x \right)^{1/\beta} 
	\end{equation}
	for $\alpha,\beta\ge1$ with $\frac1\alpha+\frac1\beta=1$.
	If $rq\alpha=p$, then the first term of the right-hand side is finite.
	
	By Lemma~\ref{lem5e9a1230}, both $\Psi$ and $\Psi^{-1}$ are differentiable almost everywhere.
	Therefore, using Lemma~\ref{lem5e9a12a0}, it follows that, for almost every $x\in U$, 
	\[
	|J\Psi(x)|^{-1} = |J\Psi^{-1}(\Psi(x))| .
	\]
	Using the area inequality~\eqref{eq03161203}, we have for all $\gamma>1$
	\begin{align*}
	\int_U |J\Psi(x)|^{-\gamma} \dd x
	&= \int_U |J\Psi^{-1}(\Psi(x))|^{\gamma+1} |J\Psi(x)|\dd x \\
	&\le \int_{\Psi(U)} |J\Psi^{-1}(y)|^{\gamma+1} \,dy 
	\le \int_{\Psi(U)}  \|D\Psi^{-1}(y)\|^{n(\gamma+1)} \dd x ,
	\end{align*}
	where we used~\eqref{eq03161234} in the last step.
	Now, if $\gamma=r\beta$, then
	\[
	\int_U |J\Psi|^{-r\beta} \dd x
	\le \int_{\Psi(U)}  \|D\Psi^{-1}(y)\|^{n(r\beta+1)} \dd x ,
	\]
	which is finite when $n(r\beta+1)=p$.
	
	Finally, solving the three equations $\frac1\alpha+\frac1\beta=1$, $rq\alpha=p$ and $n(r\beta+1)=p$ in $\alpha,\beta$ and $r$, we get
	\[
	r = \left( \frac{q}{p} + \frac{n}{p-n} \right)^{-1} 
	\]
	and $\alpha = \frac{p}{rq} = 1+\frac{pn}{q(p-n)}>1$, by the assumption $p>n$.
	Therefore, we get from~\eqref{eq5ea99bce} that $\int_U|K_\Psi|^r \dd x<\infty$.
\end{proof}

\begin{proposition}[{\cite[Theorem~5.13 and equation~(5.18)]{MR3184742}}: Regularity of composition]\label{prop5ec7f3d8}
	Let $\Psi:\Omega_1\to\Omega_2$ be a homeomorphism between open subsets of $\R^n$
	and let $1\le\,p<\,q<\,\infty$.
	Define the composition operator $T_\Psi u=u\circ\Psi$ for $u:\Omega_2\to\R$.
	Suppose that $\Psi$ has finite distortion and
	\begin{equation}\label{assumpKq}
	K^\Psi_q\in \Lbs^{\frac{p}{q-p}}_\loc(\Omega_1) .
	\end{equation}
	Then 
	$T_\Psi$ is continuous from $W_\loc^{1,q}(\Omega_2)$ to $W_\loc^{1,p}(\Omega_1)$
	for every $1\le q\le\infty$,
	where the operator norm on $T_\Psi$ is controlled by $\|K^\Psi_q\|_{\Lbs^{\frac{p}{q-p}}}^{1/q}$ on balls.
\end{proposition}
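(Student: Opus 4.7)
The plan is to reduce everything to the pointwise chain rule combined with a single application of H\"older's inequality and the area formula of Lemma~\ref{lem04031003}.

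First I would fix $u \in W^{1,q}_\loc(\Omega_2)$ and a compact set $K \Subset \Omega_1$; by continuity of $\Psi$, the image $\Psi(K)$ is a compact subset of $\Omega_2$. Approximating $u$ by smooth $u_k$ and using the classical chain rule, one has
\begin{equation*}
D(u_k\circ\Psi)(x) = Du_k(\Psi(x))\cdot D\Psi(x)
\quad\text{for a.e.~}x\in K,
\end{equation*}
so that $\|D(u_k\circ\Psi)(x)\|\le |Du_k(\Psi(x))|\,\|D\Psi(x)\|$. The key algebraic observation is that
\begin{equation*}
\|D\Psi(x)\|^p = \bigl(K^\Psi_q(x)\bigr)^{p/q} J_\Psi(x)^{p/q}
\end{equation*}
wherever $J_\Psi(x)>0$, while on $\{J_\Psi=0\}$ the finite-distortion hypothesis $\|D\Psi\|^n\le K J_\Psi$ forces $D\Psi(x)=0$, so the integrand vanishes there.

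The main estimate then follows from H\"older's inequality with dual exponents $q/p$ and $q/(q-p)$, combined with the area formula (Lemma~\ref{lem04031003}) applied to the injective map $\Psi$ (which gives $N(\Psi,K,\cdot)\le 1$):
\begin{align*}
\int_K |Du_k(\Psi)|^p \|D\Psi\|^p \dd x
&\le \left(\int_K |Du_k(\Psi)|^q J_\Psi \dd x\right)^{p/q}\left(\int_K (K^\Psi_q)^{p/(q-p)} \dd x\right)^{(q-p)/q} \\
&\le \|Du_k\|_{L^q(\Psi(K))}^p \cdot \|K^\Psi_q\|_{L^{p/(q-p)}(K)}^{p/q}.
\end{align*}
Taking $p$-th roots yields the operator-norm bound on derivatives with constant $\|K^\Psi_q\|_{L^{p/(q-p)}(K)}^{1/q}$ as stated; a parallel (easier) H\"older-plus-area-formula argument controls $\|u_k\circ\Psi\|_{L^p(K)}$ in terms of a norm of $u_k$ on $\Psi(K)$, giving continuity into the full $W^{1,p}(K)$ norm.

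The main obstacle is the passage from the smooth approximants $u_k$ to a general $u \in W^{1,q}_\loc(\Omega_2)$. Since $\Psi$ is not assumed to satisfy the Lusin (N) condition, one cannot simply change variables on $\Omega_1$ in expressions of the form $\int |u-u_k|^p$, so standard density arguments are not directly available. The remedy is to exploit the linearity of $T_\Psi$ and the uniformity of the a priori estimate just proved: it shows that $(u_k\circ\Psi)$ is Cauchy in $W^{1,p}(K)$ whenever $(u_k)$ is Cauchy in $W^{1,q}$ on a neighbourhood of $\Psi(K)$, so its $W^{1,p}$-limit is well-defined and, after extracting a subsequence converging almost everywhere, may be identified with $u\circ\Psi$. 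Continuity of $T_\Psi$ and the stated bound on its operator norm follow.
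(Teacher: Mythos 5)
Your core a~priori estimate is correct and is precisely the standard argument behind the cited result (the paper itself offers no proof of Proposition~\ref{prop5ec7f3d8}, deferring to \cite[Theorem~5.13, eq.~(5.18)]{MR3184742}): for smooth $u_k$ the chain rule is legitimate, the identity $\|D\Psi\|^p=(K^\Psi_q)^{p/q}J_\Psi^{p/q}$ on $\{J_\Psi>0\}$ together with $D\Psi=0$ a.e.\ on $\{J_\Psi=0\}$ (finite distortion), H\"older with exponents $q/p$ and $q/(q-p)$, and the area inequality of Lemma~\ref{lem04031003} with $N(\Psi,K,\cdot)\le 1$ give exactly the gradient bound with constant $\|K^\Psi_q\|_{\Lbs^{p/(q-p)}(K)}^{1/q}$, i.e.\ (5.18).

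The genuine gap is in the passage from smooth $u_k$ to general $u\in W^{1,q}_\loc(\Omega_2)$. First, the ``parallel (easier) H\"older-plus-area-formula argument'' for $\|u_k\circ\Psi\|_{\Lbs^p(K)}$ does not exist as described: without a Jacobian factor Lemma~\ref{lem04031003} is not applicable, and you cannot insert $J_\Psi$, since $J_\Psi$ may vanish or be arbitrarily small on sets of positive measure, so $\int_K|u_k\circ\Psi|^pJ_\Psi\dd x$ does not control $\int_K|u_k\circ\Psi|^p\dd x$; hence the Cauchy property of $u_k\circ\Psi$ in the full $W^{1,p}(K)$ norm (as opposed to the seminorm $\|D(\cdot)\|_{\Lbs^p(K)}$) is not established. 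Second, and more seriously, the identification of the limit with $u\circ\Psi$ is unjustified: extracting a subsequence with $u_k\to u$ a.e.\ in $\Omega_2$ only gives $u_k(\Psi(x))\to u(\Psi(x))$ for $x\notin\Psi^{-1}(N)$, and since $\Psi$ is not assumed to satisfy condition (N$^{-1}$) (equivalently, $\Psi^{-1}$ need not satisfy Lusin (N)), the set $\Psi^{-1}(N)$ can have positive measure; without this, even the well-definedness of $T_\Psi u$ on a.e.-equivalence classes is delicate. The standard repair is a Sard-type lemma for finite-distortion Sobolev maps --- the area inequality on $\{J_\Psi\neq0\}$ plus the distortion inequality on $\{J_\Psi=0\}$ yield $D\Psi=0$ a.e.\ on $\Psi^{-1}(N)$ for every null set $N$ --- combined with quasicontinuous representatives/capacitary convergence or Lipschitz truncations of $u$; alternatively, in the situations where the paper actually applies the proposition one has $J_\Psi>0$ a.e.\ and $\Psi^{-1}\in W^{1,p}_\loc$ with $p>n$, so $\Psi^{-1}$ satisfies Lusin (N) by Lemma~\ref{lem5e9a1230} and the identification becomes immediate. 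As it stands, your proof establishes the key gradient estimate for smooth data but not the stated continuity of $T_\Psi$ on all of $W^{1,q}_\loc(\Omega_2)$.
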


\section{Preliminaries on flows of vector fields}\label{sec5edff28b}
In this section we present a part of the classical theory of flows of vector fields that we will need later.

\subsection{Well-posedness of vector fields}
We denote by $B(x,r)$ the Euclidean ball of radius $r$ and center $x$.
Recall that a vector field $b(t,x)$ is said to be \emph{autonomous} if it does not depend on $t$, 
\emph{non-autonomous} if it may depend on $t$.

\begin{definition}[Well posedness]\label{WP}
	Let $\Omega\subset\R^n$ open, $I\subset\R$ an open interval and $b:I\times\Omega\to\R^n$ a function.
	We say that $b$ is \emph{well-posed in $I\times\Omega$} if for every $(s,x)\in I\times\Omega$ there exist $\epsilon>0$ and, in the interval
	$(s-\epsilon,s+\epsilon)\subset I$, a unique absolutely continuous solution
	$\gamma:\,(s-\epsilon,s+\epsilon)\to\Omega$ of~\eqref{eq03161620}, that is, $\dot\gamma(t)=\,b(t,\gamma(t))$ for 
	a.e.~$t\in (s-\epsilon,s+\epsilon)$ and $\gamma(s)=x$.
Analogously, we say that $b$ is \emph{well-posed in $\overline{I}\times\Omega$} if for every $(s,x)\in\overline{I}\times\Omega$ 
	there exist $\epsilon>0$ and, in the interval $I\cap (s-\epsilon,s+\epsilon)$, a unique absolutely continuous solution
	$\gamma:\,I\cap (s-\epsilon,s+\epsilon)\to\Omega$ of~\eqref{eq03161620}.
\end{definition}

The following classical result provides a sufficient condition for well-posedness, in local and global form
with respect to the time variable.
The proof is rather classical and well known, see for instance~\cite{MR1028776,MR1929104}.

\begin{proposition}[Well-posedness with Osgood Condition]\label{prop628f93c2}
	Let $\omega:[0,+\infty)\to[0,+\infty)$ be non-decreasing with $\omega(0)=0$, $\omega(t)>0$ for $t>0$ and
	\begin{equation}\label{eq628f3982}
	 \int_0^1 \frac{1}{\omega(t)} \dd t = +\infty .
	\end{equation}
	
	Let $I\subset\R$ be an interval, 
	and $\Omega\subset\R^n$ an open set.
	Let $b:I\times\Omega\to\R^n$ be a function such that:
	\begin{enumerate}[label=(\alph*)]
	\item
	there exists a positive function $\phi\in L^1_\loc(I)$ such that 
		\begin{equation}\label{eq628f40a0}
		|b(t,x)-b(t,y)| \le \phi(t) \omega(|x-y|)
		\end{equation}
	for all $t\in I$ and $x,\,y\in\Omega$;
	\item for every $x\in\Omega$, the function $t\mapsto b(t,x)$ is measurable;
	\item there exists $m\in L^1_\loc(I)$ such that $|b(t,x)|\le m(t)$ for all $(t,x)\in I\times\Omega$.
	\end{enumerate}

	Then the following hold:
	\begin{enumerate}[label=(\roman*)]
	\item\label{prop628f93c2_wellpos}
	The vector field $b$ is well posed in $I\times\Omega$.
	For $(s,x)\in I\times\Omega$, denote by $\gamma_{(s,x)}$ the maximal solution of~\eqref{eq03161620} with $\gamma_{(s,x)}(s)=x$ and by $I_{(s,x)}$ its maximal domain of existence.
	Define also $\alpha(s,t) = \inf I_{(s,x)}$ and $\beta(s,t) = \sup I_{(s,x)}$.
	\item\label{prop628f93c2_boundary}
	If $\beta(s,x)\neq \sup I$, then $\lim_{t\to \beta(s,x)}\gamma_{(s,x)}$ exists and belongs to $\de\Omega$.
	Similarly for $\alpha(s,x)\neq \inf I$.
	\item\label{prop628f93c2_continuity}
	If $(s_j,x_j)\in I\times\Omega$ is a sequence converging to $(s_\infty,x_\infty)\in I\times\Omega$,
	with $\gamma_j=\gamma_{(s_j,x_j)}:I_{(s_j,x_j)}\to\Omega$ the corresponding maximal solutions, $j\in\N\cup\{\infty\}$,
	and if $s_\infty\in J\Subset I_{(s_\infty,x_\infty)}$, then $J\subset I_{(s_j,x_j)}$ for $j$ large enough and $\gamma_j$ converge to $\gamma_\infty$ uniformly on $J$.
	
	In particular, $\alpha$ is upper semicontinuous and $\beta$ is lower semicontinuous.
	\item\label{prop628f93c2_totheboundary}
	If $m\in L^1(I)$, then $\gamma_{(s,x)}$ is well defined also for $(s,x)\in\partial I\times\Omega$ and the limits
	\[
	\lim_{t\to\beta(s,x)} \gamma_{(s,x)}(t)
	\quad\text{and}\quad
	\lim_{t\to\alpha(s,x)} \gamma_{(s,x)}(t)
	\]
	exist in $\bar\Omega$.
	In particular, $b$ is well posed in $\overline I\times\Omega$.
	\end{enumerate}	
\end{proposition}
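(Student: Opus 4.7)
The plan is to combine Carath\'eodory existence with Osgood's uniqueness lemma, and then to exploit the bound $|b|\le m(t)$ both for the escape statement and for continuous dependence. Assumption~(a) makes $x\mapsto b(t,x)$ uniformly continuous on $\Omega$ with modulus $\phi(t)\omega$, assumption~(b) gives $t$-measurability, and (c) gives a pointwise $L^1_\loc$ bound; the standard Carath\'eodory theorem therefore produces, for each $(s,x)\in I\times\Omega$, at least one absolutely continuous solution of~\eqref{eq03161620} on a small interval around $s$, obtained as a uniform limit of Euler-type approximations that are equibounded and equi-absolutely continuous thanks to~(c). For uniqueness, if $\gamma_1,\gamma_2$ are two such solutions, the function $\rho(t):=|\gamma_1(t)-\gamma_2(t)|$ satisfies $\rho(s)=0$ and, for $t\ge s$,
\[
\rho(t)\le\int_s^t\phi(\tau)\,\omega(\rho(\tau))\,\dd\tau,
\]
so the divergence hypothesis~\eqref{eq628f3982} forces $\rho\equiv 0$ by the classical Osgood comparison argument. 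A Zorn-type maximal extension then yields the unique maximal solution $\gamma_{(s,x)}$ on an open interval $I_{(s,x)}\subset I$, proving~\ref{prop628f93c2_wellpos}.

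For~\ref{prop628f93c2_boundary}, suppose $\beta:=\beta(s,x)\in I$. Because $|\dot\gamma_{(s,x)}|\le m\in L^1_\loc(I)$, the curve $\gamma_{(s,x)}$ is absolutely continuous on $[s,\beta)$, so by Cauchy completeness the limit $x^*:=\lim_{t\to\beta^-}\gamma_{(s,x)}(t)$ exists in $\overline\Omega$. If $x^*$ belonged to $\Omega$, the local existence argument applied at $(\beta,x^*)$ would extend $\gamma_{(s,x)}$ beyond $\beta$, contradicting maximality; hence $x^*\in\partial\Omega$, and the left endpoint is treated symmetrically. For~\ref{prop628f93c2_totheboundary}, if $m\in L^1(I)$ globally, every maximal solution is uniformly absolutely continuous in $t$ across its whole domain, so the limits at the endpoints of $I_{(s,x)}$ exist in $\overline\Omega$; well-posedness at points of $\partial I\times\Omega$ then follows by applying the same local arguments on $I\cap(s-\epsilon,s+\epsilon)$.

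For the continuous dependence~\ref{prop628f93c2_continuity}, fix $J\Subset I_{(s_\infty,x_\infty)}$ containing $s_\infty$, and set $K:=\gamma_\infty(J)\subset\Omega$; choose $\delta>0$ so that the tube $K_\delta:=\{y:\mathrm{dist}(y,K)\le\delta\}$ is compactly contained in $\Omega$. As long as $\gamma_j$ stays inside $K_\delta$ on the subinterval between $s_j$ and $t\in J$, one has
\[
|\gamma_j(t)-\gamma_\infty(t)|\le |x_j-\gamma_\infty(s_j)|+\Bigl|\int_{s_j}^{t}\phi(\tau)\,\omega\bigl(|\gamma_j(\tau)-\gamma_\infty(\tau)|\bigr)\,\dd\tau\Bigr|,
\]
and the quantitative form of Osgood's lemma gives $\sup_{t\in J}|\gamma_j(t)-\gamma_\infty(t)|\to0$, since $|x_j-\gamma_\infty(s_j)|\to0$ by continuity of $\gamma_\infty$ at $s_\infty$ and $x_j\to x_\infty=\gamma_\infty(s_\infty)$. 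A first-exit-time argument then rules out that $\gamma_j$ can actually leave $K_\delta$ on $J$ for $j$ large, so $J\subset I_{(s_j,x_j)}$ eventually. In particular $\liminf_j\beta(s_j,x_j)\ge\sup J$ for every such $J$, giving the lower semicontinuity of $\beta$, and symmetrically the upper semicontinuity of $\alpha$.

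The main obstacle is precisely~\ref{prop628f93c2_continuity}, because the initial times $s_j$ differ from $s_\infty$: one must intertwine the Osgood comparison with a stopping-time argument that stops $\gamma_j$ at its first exit from $K_\delta$ and shows, using the absolute continuity of $\int m$, that this exit time exceeds $\sup J$ for $j$ large. Once this is in hand, parts~\ref{prop628f93c2_wellpos},~\ref{prop628f93c2_boundary} and~\ref{prop628f93c2_totheboundary} are routine consequences of Osgood's inequality and the $L^1$ bound on $b$.
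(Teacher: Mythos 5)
Your argument is correct, and it coincides with what the paper intends: the paper gives no proof of this proposition, labelling it classical and deferring to the ODE references, and your scheme (Carath\'eodory existence from (a)--(c), Osgood uniqueness via \eqref{eq628f3982}, maximal extension, escape to $\de\Omega$ using the $L^1_\loc$ bound, and continuous dependence via the quantitative Osgood estimate combined with a first-exit-time argument in a tube $K_\delta\Subset\Omega$) is exactly the standard proof those references supply. The only details to spell out in a complete write-up are the minor technicalities you already flag, e.g.\ that $s_j$ may fall just outside $J$ (fixed by enlarging $J$ slightly inside the open interval $I_{(s_\infty,x_\infty)}$) and that hypothesis (a) holds globally on $\Omega$, so the tube is needed only to guarantee $J\subset I_{(s_j,x_j)}$ via part~\ref{prop628f93c2_boundary}, not for the comparison inequality itself.
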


\subsection{The flow of a well-posed vector field}\label{sec5ed296d6}

Fix an open set $\Omega\subset\R^n$, an open interval $I\subset\R$ and a well-posed vector field $b:I\times\Omega\to\R^n$. 
For every $(s,x)\in I\times\Omega$, let $I_{(s,x)}=\,(\alpha(s,x),\beta(s,x))\subset I$ be the maximal open interval of existence of a solution $\gamma_{(s,x)}: I_{(s,x)}\to\Omega$ to the Cauchy problem~\eqref{eq03161620} with initial datum $\gamma_{(s,x)}(s)=x$.
Define 
\[
\Dom_b := \left\{ (t,s,x) : (s,x)\in I\times\Omega,\ t\in I_{(s,x)} \right\} \subset  I\times I\times\Omega ,
\]
and the \emph{flow} $X:\Dom_b\to\Omega$ of $b$ as $X(t,s,x) = \gamma_{(s,x)}(t)$.
For every $(t,s)\in I\times I$, we define $\Omega_{(t,s)} = \{x\in\Omega:(t,s,x)\in \Dom_b\}
= \{x\in\Omega: t\in I_{(s,x)}\}$ (which is possibly empty) and $X_{(t,s)}:\Omega_{(t,s)}\to\Omega$ as $X_{(t,s)}(x)=X(t,s,x)$.

An analogous notation can be used for the case of vector fields well posed in $\overline{I}\times\Omega$, just allowing $s$ and
$t$ to belong also to $\partial I$. 

We recall several well-known facts about flows of vector fields, 
referring to fix the ideas to the weaker case of well posedness in
$I\times\Omega$. See for instance~\cite{MR1929104} for reference.

\begin{remark}\label{rem62a45ceb}
	If $b:I\times \Omega \to\R^n$ 
	satisfies the hypothesis of Proposition~\ref{prop628f93c2},
	then the sets $\Dom_b$ and $\Omega_{(t,s)}$ are open
	and $X$ is continuous,
	thanks to Proposition~\ref{prop628f93c2}\ref{prop628f93c2_continuity}.
	
	Uniqueness gives the semigroup-type property
	\begin{equation}\label{eq5ec5219e}
	X(t_3,t_2,X(t_2,t_1,x)) = X(t_3,t_1,x)\ ,
	\end{equation}
	whenever the expressions make sense.
	Moreover, the map $X_{(t,s)}$ is a homeomorphism $\Omega_{(t,s)}\to\Omega_{(s,t)}$ with inverse~$X_{(s,t)}$.
	Finally, if $k\ge1$ and $b\in C^k(I\times\Omega;\R^n)$, then $X\in C^{k+1}(\Dom_b;\Omega)$.
\end{remark}

\begin{lemma}[{\cite[Theorem~3.1, Chap. V]{MR1929104}}]\label{lem04030932}
Any $b\in C^0(I;C^1(\Omega;\R^n))$ is well posed in $I\times\Omega$ and the homeomorphisms 
$X_{(t,s)}:\Omega_{(t,s)}\to\Omega_{(s,t)}$ are of class $C^1$.
	
Fix $(s,x)\in I\times\Omega$ and set the following functions defined on $I_{(s,x)}$
\begin{align*}
y(t)&:=\,D_xX(t,s,x) , \\
B(t)&:=\,(D_xb)(t,X(t,s,x)) , \\
J(t)&:=\det(D_xX(t,s,x)) , \\
\beta(t)&:=\div_x(b)(t,X(t,s,x)) = \trace(B(t)).
\end{align*}
Then $y$ and $J$ are the unique $C^1$ solutions of the following initial value problems:
\[
\begin{cases}
\dot{y}(t)& = B(t) y(t) , \\
y(s)&= \mathrm{Id} ,
\end{cases}
\qquad\text{and}\qquad
\begin{cases}
\dot{J}(t)&=\beta(t)J(t) , \\
J(s)&=1 .
\end{cases}
\]
\end{lemma}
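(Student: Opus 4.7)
The plan is first to establish well-posedness by verifying the Osgood hypothesis of Proposition~\ref{prop628f93c2}. Since $b\in C^0(I;C^1(\Omega;\R^n))$, on every compact rectangle $J\times K\subset I\times\Omega$ the derivative $D_xb$ is uniformly bounded, which by the mean value theorem yields the local Lipschitz estimate $|b(t,x)-b(t,y)|\le \phi(t)|x-y|$ on such a rectangle with $\phi$ bounded on $J$. This is Osgood's condition with $\omega(r)=r$, and continuity of $b$ in $(t,x)$ supplies the remaining measurability and local boundedness. Proposition~\ref{prop628f93c2} then yields well-posedness in $I\times\Omega$, continuity of the flow $X$ on $\Dom_b$, and (via Remark~\ref{rem62a45ceb}) the homeomorphism property of $X_{(t,s)}$.

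Next I would establish differentiability of $x\mapsto X(t,s,x)$ by exhibiting the candidate Jacobian as the solution $y(t)$ of the variational linear ODE $\dot y=B(t)y$, $y(s)=\mathrm{Id}$, where $B(t)=(D_xb)(t,X(t,s,x_0))$. Setting $w_h(t):=X(t,s,x_0+h)-X(t,s,x_0)-y(t)h$ and subtracting the integral forms of the two ODEs, one uses the first-order Taylor expansion of $b(t,\cdot)$ together with the uniform continuity of $D_xb$ on a compact neighbourhood of the orbit to obtain a differential inequality $|\dot w_h(t)|\le \|B(t)\|\,|w_h(t)|+\varepsilon(|h|)\,|h|$ with $\varepsilon(|h|)\to 0$ as $h\to 0$. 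Gronwall's lemma forces $w_h(t)=o(|h|)$ uniformly on compact subintervals of $I_{(s,x_0)}$, giving $D_xX(t,s,x_0)=y(t)$. Continuity of $(t,s,x)\mapsto D_xX(t,s,x)$ follows from continuous dependence of the variational ODE on its coefficients and initial time, which themselves depend continuously on $(t,s,x)$ via the first part of the lemma.

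Given now that $y(t)=D_xX(t,s,x_0)$ solves $\dot y=By$ with $y(s)=\mathrm{Id}$, the determinant $J(t)=\det y(t)$ satisfies Jacobi's identity $\dot J=J\cdot \trace(y^{-1}\dot y)=J\cdot \trace B=\beta J$ wherever $y$ is invertible. Since $J$ solves a linear scalar ODE with $J(s)=1$, it never vanishes, so $y$ remains invertible throughout $I_{(s,x_0)}$ and the derivation is valid on the whole interval. Uniqueness for the two initial value problems is the standard uniqueness theorem for linear ODEs. The main obstacle is the Gronwall step in the second paragraph: one needs a uniform modulus of continuity for $D_xb$ on a compact tube around the trajectory that is robust under small perturbations of the initial datum, and this robustness is what makes it essential to have first proved continuity of the flow.
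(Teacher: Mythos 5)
This lemma is not proved in the paper at all: it is quoted verbatim from the classical reference (Hartman, Theorem~3.1, Chap.~V), and your argument is essentially the standard textbook proof of that result — local Lipschitz continuity in $x$ giving well-posedness, the variational equation plus a Gronwall estimate on $w_h(t)=X(t,s,x_0+h)-X(t,s,x_0)-y(t)h$ giving differentiability and $D_xX=y$, continuous dependence for linear systems giving the $C^1$ property, and Liouville/Jacobi giving the equation for $J$. So your route is correct and coincides with the cited one.

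One small wrinkle in the last step: as written, the argument "$\dot J=J\,\trace(y^{-1}\dot y)=\beta J$ wherever $y$ is invertible, and since $J$ solves a linear scalar ODE it never vanishes, hence $y$ is invertible throughout" is circular — you need invertibility of $y$ on the whole interval to know that $J$ solves the scalar ODE there in the first place. Two one-line repairs: either note that the fundamental matrix of a linear system is invertible everywhere (if $y(t_0)v=0$ with $v\neq0$, then $t\mapsto y(t)v$ solves the linear ODE and vanishes at $t_0$, hence vanishes identically, contradicting $y(s)v=v$), or use the adjugate form of Jacobi's formula, $\dot J=\trace\bigl(\mathrm{adj}(y)\,\dot y\bigr)=\trace\bigl(\mathrm{adj}(y)\,B\,y\bigr)=\det(y)\,\trace(B)=\beta J$, which needs no invertibility. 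With either fix the proof is complete.
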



When considering smooth approximations of vector fields, 
we need to control the convergence of the corresponding flows, as in Lemma~\ref{lem5e996a0e} below.
The following results are fairly well-known, but in absence of a reference precisely adapted to our purposes
we provide sketch of proofs for the reader's convenience. 
An analogous statement holds
for vector fields well posed in $\overline{I}$, considering the additional 
points $(s_\infty,x_\infty)\in\partial I\times\Omega$ and the stronger convergence 
in $L^1(I;C(\Omega';\R^n))$ for any $\Omega'\Subset\Omega$.

\begin{lemma}\label{lem5e996a0e}
	Let $\{b_h\}_{h\in\N\cup\{\infty\}}$ be well posed vector fields in $I$ and denote
	by $I^h_{(s,x)}$ their maximal existence times and by $X^h$ their flows.  
	If $b_h\to b_\infty$ in $L^1_\loc(I;C(\Omega';\R^n))$ for any $\Omega'\Subset\Omega$ 
	and $(s_h,x_h)\to (s_\infty,x_\infty)\in I\times\Omega$
	as $h\to \infty$, then for any compact interval $J\subset I^\infty_{(s_\infty,x_\infty)}$ one has
	$$
	J\subset I^h_{(s_h,x_h)}\qquad\text{for $h$ large enough}
	$$
	and $X^h(\cdot,s_h,x_h)$ converge uniformly to $X^\infty(\cdot,s_\infty,x_\infty)$ on $J$.
\end{lemma}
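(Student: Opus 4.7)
The plan is to combine a continuation argument with Ascoli--Arzelà compactness, using uniqueness (which is part of well-posedness of $b_\infty$) to identify the limits of convergent subsequences. A direct Gronwall-style estimate is unavailable, because the hypotheses give no uniform-in-$h$ modulus of continuity for $b_h$ in the $x$ variable.

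The first step is to reduce to a local statement: for every $t_0 \in I^\infty_{(s_\infty,x_\infty)}$ with $y_0 := X^\infty(t_0,s_\infty,x_\infty)$, there exists $\epsilon > 0$ such that whenever $(t_0^h, y_0^h) \to (t_0, y_0)$ in $I\times\Omega$, the maps $X^h(\cdot, t_0^h, y_0^h)$ are defined on $[t_0-\epsilon, t_0+\epsilon]\cap I$ for $h$ large and converge there uniformly to $X^\infty(\cdot, t_0, y_0)$. Given this, the global claim follows by covering the compact interval $J$ by finitely many such windows and propagating via the semigroup identity $X^h(t, s_h, x_h) = X^h(t, t_0^h, X^h(t_0^h, s_h, x_h))$, starting from $t_0 = s_\infty$. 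To prove the local statement, pick $\delta > 0$ with $\overline{B(y_0, 2\delta)}\subset\Omega$ and $\epsilon > 0$ so small that $\int_{t_0-\epsilon}^{t_0+\epsilon}\|b_\infty(\tau,\cdot)\|_{C(\overline{B(y_0,2\delta)})} \dd\tau < \delta/4$. The assumed convergence with $\Omega' = B(y_0, 2\delta)$ upgrades this bound to $\delta/2$ for $b_h$ when $h$ is large. A bootstrap argument then shows that, as long as $y_0^h\in B(y_0,\delta/4)$, the trajectory $X^h(\cdot, t_0^h, y_0^h)$ cannot exit $B(y_0,\delta)$ during a time window of length $\epsilon$; combined with the blow-up alternative for well-posed flows (a maximal trajectory not reaching an endpoint of $I$ must leave every compact subset of $\Omega$), this forces the trajectory to be defined on the full interval $[t_0-\epsilon,t_0+\epsilon]\cap I$ and to stay in $\overline{B(y_0,\delta)}$.

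On this interval, $\{X^h(\cdot, t_0^h, y_0^h)\}_h$ is equibounded and equicontinuous by the uniform integral bound on $\|b_h(\tau,\cdot)\|_{C(\overline{B(y_0,\delta)})}$, so Ascoli--Arzelà extracts a uniformly convergent subsequence with limit $\gamma$. To pass to the limit in $X^h(t, t_0^h, y_0^h) = y_0^h + \int_{t_0^h}^t b_h(\tau, X^h(\tau, t_0^h, y_0^h))\dd\tau$, I would split the difference against $y_0 + \int_{t_0}^t b_\infty(\tau,\gamma(\tau))\dd\tau$ into a boundary term $\int_{t_0}^{t_0^h}$ vanishing by integrability of $b_\infty$ in time, a spatial-continuity term $\int |b_\infty(\tau,\gamma)-b_\infty(\tau,X^h)|\dd\tau$ controlled by the a.e.~$\tau$ continuity of $b_\infty(\tau,\cdot)$ on the compact set $\overline{B(y_0,\delta)}$ via dominated convergence, and an $L^1$-convergence term bounded by $\int \|b_h(\tau,\cdot) - b_\infty(\tau,\cdot)\|_{C(\overline{B(y_0,\delta)})}\dd\tau \to 0$. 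Hence $\gamma(t) = y_0 + \int_{t_0}^t b_\infty(\tau,\gamma(\tau))\dd\tau$, and by uniqueness $\gamma = X^\infty(\cdot, t_0, y_0)$; since every subsequence admits a further subsequence with the same limit, the full sequence converges. The main obstacle is the bootstrap/blow-up step that secures existence of each $X^h(\cdot, t_0^h, y_0^h)$ on the full window $[t_0-\epsilon,t_0+\epsilon]\cap I$, together with the care required to pass to the limit in the integral equation using only $L^1_\loc(I; C(\Omega'))$ convergence rather than any uniform-in-time bound on $b_h - b_\infty$.
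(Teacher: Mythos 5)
Your proposal is correct, but it is organized differently from the paper's argument. The paper works globally: it encloses the graph $\Gamma=\{(v,X^\infty(v,s_\infty,x_\infty)):v\in J\}$ in a tube $U=J'\times\Omega'\Subset I\times\Omega$, considers the maximal solutions of $b_h$ constrained to remain in $U$, uses equicontinuity (coming, as in your argument, from the $L^1_\loc(I;C(\Omega'))$ bounds) to extract graph limit points, identifies any limit as a solution of $b_\infty$ through $(s_\infty,x_\infty)$ whose graph can only terminate on $\partial U$ or $\partial I$, and concludes by uniqueness: since $X^\infty(\cdot,s_\infty,x_\infty)$ restricted to $J$ stays away from $\partial U$, the constrained curves must eventually cover $J$ and converge there. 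You instead localize in time: a confinement/bootstrap estimate on small windows plus the blow-up alternative gives existence of $X^h$ on each window, Ascoli--Arzel\`a plus the limit passage in the integral equation (your three-term splitting is sound, using that $b_\infty(\tau,\cdot)$ is continuous for a.e.\ $\tau$ and dominated convergence) identifies the limit via uniqueness, and the semigroup identity chains the windows along $J$. The tube argument buys brevity and sidesteps any chaining; your window argument is more quantitative and makes the convergence of the full sequence explicit via the subsequence trick. The one spot in your sketch that needs a touch more care is the chaining itself: the window size $\epsilon$ depends on the center $(t_0,y_0)$, so to guarantee that finitely many steps reach the end of $J$ you should fix a single $\delta$ and a single $\epsilon$ valid along the whole (compact) limit trajectory, e.g.\ by taking $\delta$ a fraction of the distance from $\{X^\infty(t,s_\infty,x_\infty):t\in J\}$ to $\partial\Omega$ and using absolute continuity of $\tau\mapsto\|b_\infty(\tau,\cdot)\|_{C(K)}$ on the corresponding compact neighborhood $K$; the paper's tube construction handles this uniformity automatically. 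Also note that your use of the blow-up alternative is legitimate here only because the hypothesis implicitly gives $b_h\in L^1_\loc(I;C(\Omega'))$, which is what allows a trajectory trapped in a compact subset of $\Omega$ to be extended; it does not follow from well-posedness alone, so it is worth saying.
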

\begin{proof} 
Let us consider the compact curve $\Gamma=\{(v,X^\infty(v,s_\infty,x_\infty)):\ v\in J\}$
and let $U$ be the open set $J'\times\Omega'$, with $J\Subset J'\Subset I$ and $\Omega'\Subset\Omega$ chosen in such a way
that $\Gamma\subset U$. 
By assumption, 
 $\int_{J'}\sup_{\Omega'}|b_h(t,\cdot)-b(t,\cdot)|\dd t$ is infinitesimal as $h\to\infty$. Let us consider the maximal solutions $\gamma_h(v)$ for the ODE relative to
 $b_h$, starting from $x_h$ at $s_h$ and with graph remaining in $U$ (in particular, restrictions
 of $X^h(\cdot,s_h,x_h)$), and let $J_h\subset I^h_{(s_h,x_h)}$ their maximal existence
intervals. Since the curves $\gamma_h$ are 
 equicontinuous, it is clear that limit points of the graph of these curves
exist, and that any limit point is the graph of a solution $\gamma_\infty$ to the ODE relative to $b_\infty$, 
with $\gamma_\infty(s_\infty)=x_\infty$, defined on an interval $J_\infty$ and with the
 property that, as $v$ tends to one, if any, of the extreme points of $J_\infty$ different from $\partial I$, $(v,\gamma_\infty(v))$ tends to
 $\partial U$. Since $U\Subset I\times\Omega$, the well-posedness of $b_\infty$ yields that the curve $\gamma_\infty$ is
 not only a restriction of the maximal curve $X^\infty(v,s_\infty,x_\infty)$, $v\in I^\infty_{(s_\infty,x_\infty)}$, but also contains the
 curve $X^\infty(v,s_\infty,x_\infty)$, $v\in J$, since this restricted curve does not touch the boundary of $U$.
 \end{proof}
 
The previous lemma grants, in particular, the lower semicontinuity of $(s,x)\mapsto {\rm length}(I_{(s,x)})$ in $I\times\Omega$.
Moreover, a simple contradiction argument gives 
\begin{equation}\label{stronginclapprox}
\begin{split}
\text{ for any compact interval $J\subset I^\infty_{(s_\infty,x_\infty)}$ and any open set $A\Subset\Omega$, one has}\\
\exists\,\bar h\text{ such that }J\subset I^h_{(s,x)}\qquad\text{for each $h>\bar h$ and $(s,x)\in J\times A$}\,;
\end{split}
\end{equation}
\begin{equation}\label{eq:domains}
A\Subset\Omega^\infty_{(t,s)}\quad\text{implies}\quad
A\Subset\Omega^h_{(t,s)}\quad\text{for $h$ large enough}
\end{equation}
and the uniform convergence of $X_h(t,s,\cdot)$ to $X_\infty(t,s,\cdot)$ on $A$.

\section{Well-posedness with Orlicz condition}\label{sec5edff2b1}
In this section we are going to prove Theorem~\ref{thm5e999915}.
We fix some dimensional constants for $n\ge1$.
Let $\omega_n=|B(0,1)|$ be the volume of the unit Euclidean ball in $\R^n$ and $\sigma_{n-1}=n\omega_n$ the perimeter of $B(0,1)$;
let $\tau_n = |B(0,1)\cap B(q,1)|$ for any $q\in\de B(0,1)$;
finally, $C_n$ is the constant from Lemma~\ref{lem03211522} 
and $\kappa_n=2\tau_n^{-1}\omega_nC_n\sigma_{n-1}$ will appear in Lemma~\ref{lem03211603}.

\begin{lemma}\label{lem03211522} Let $b\in W^{1,1}(B(x,r);\R^n)$ and assume that $x$ is a Lebesgue point of $b$. Then,
	for some dimensional constant $C_n$, one has
	\begin{equation}\label{eq03201820}
	\ave_{B(x,r)} |b(x)-b(y)| \dd y 
	\le C_n \int_{B(x,r)} \frac{ \|D b(y)\| }{|x-y|^{n-1}} \dd y.
	\end{equation}
\end{lemma}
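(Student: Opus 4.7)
The plan is to prove the inequality first for smooth $b$ by the fundamental theorem of calculus along radii from $x$, then to extend to general $b\in W^{1,1}$ by mollification, using the Lebesgue point assumption to control the pointwise value $b(x)$.

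First I would reduce to the case $b\in C^\infty(B(x,r);\R^n)$. For any $y=x+\rho\omega$ with $\omega\in S^{n-1}$ and $0<\rho<r$, the fundamental theorem of calculus along the segment from $x$ to $y$ gives
\[
b(y)-b(x)=\int_0^\rho Db(x+s\omega)\,\omega\,\dd s,
\qquad\text{hence}\qquad
|b(y)-b(x)|\le \int_0^\rho \|Db(x+s\omega)\|\,\dd s.
\]
Integrating over $B(x,r)$ in polar coordinates centered at $x$, swapping the order of integration in $s$ and $\rho$, and using $\int_s^r\rho^{n-1}\dd\rho\le r^n/n$, I would obtain
\[
\int_{B(x,r)}|b(y)-b(x)|\dd y\le \frac{r^n}{n}\int_{S^{n-1}}\int_0^r \|Db(x+s\omega)\|\dd s\dd\omega=\frac{r^n}{n}\int_{B(x,r)}\frac{\|Db(y)\|}{|y-x|^{n-1}}\dd y,
\]
the last identity being the reverse polar coordinate change. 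Dividing both sides by $|B(x,r)|=\omega_n r^n$ yields \eqref{eq03201820} with $C_n=1/(n\omega_n)=1/\sigma_{n-1}$.

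The remaining step is to remove the smoothness hypothesis by mollification. Let $b_\varepsilon=b\ast\rho_\varepsilon$, where $\rho_\varepsilon$ is a standard mollifier, so that $b_\varepsilon\to b$ in $W^{1,1}$ on compact subsets of $B(x,r)$ and, since $x$ is a Lebesgue point of $b$, $b_\varepsilon(x)\to b(x)$. The inequality from the first step applies to each $b_\varepsilon$. The left-hand side passes to the limit since $b_\varepsilon\to b$ in $L^1(B(x,r))$ and $b_\varepsilon(x)\to b(x)$. For the right-hand side, Jensen's inequality applied to the convex operator norm gives $\|Db_\varepsilon\|\le \|Db\|\ast\rho_\varepsilon$ pointwise; combined with the local integrability of $y\mapsto |y-x|^{1-n}$ on $B(x,r)$ and a Fubini exchange, this shows that $\int_{B(x,r)}|y-x|^{1-n}\|Db_\varepsilon(y)\|\dd y$ converges, up to subsequences, to the corresponding integral for $b$.

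The only subtle point is the limit passage on the right-hand side, where the weight $|y-x|^{1-n}$ is unbounded at $x$; the use of Jensen's inequality through $\|Db_\varepsilon\|\le \|Db\|\ast\rho_\varepsilon$, together with $|\cdot-x|^{1-n}\in L^1(B(x,r))$, handles this difficulty without requiring any additional integrability on $Db$.
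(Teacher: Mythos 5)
Your argument is correct in substance, and for the smooth case it is essentially the same radial-integration computation that the paper imports from Evans--Gariepy (FTC along rays, polar coordinates, swap of the $s$ and $\rho$ integrals); your constant $1/\sigma_{n-1}$ is even slightly better, and only ``some dimensional constant'' is required. Where you genuinely diverge from the paper is in the approximation step, and that is also the one place where your write-up is thinner than it should be. The paper deliberately keeps the truncation to the annulus $B(x,r)\setminus B(x,\epsilon r)$ while mollifying, so that during the limit $h\to\infty$ the right-hand side is an average with a \emph{bounded} weight and plain $W^{1,1}$-convergence suffices; only afterwards does it let $\epsilon\to0$ and use Fubini to produce the singular kernel $|x-y|^{1-n}$. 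You instead pass to the limit directly against the singular kernel, and your justification (``Jensen plus integrability of $|y-x|^{1-n}$ plus Fubini'') is not yet a proof: Fatou goes the wrong way, and $\|Db\|\ast\rho_\varepsilon$ is not pointwise dominated by $\|Db\|$. What makes your route work is the following quantitative fact, which you should state: after the Fubini exchange the relevant kernel is
\[
K_\varepsilon(z):=\int_{B(x,r')}\frac{\rho_\varepsilon(y-z)}{|y-x|^{n-1}}\dd y ,
\qquad\text{and}\qquad
K_\varepsilon(z)\le \frac{C_n'}{|z-x|^{n-1}}\quad\text{uniformly in }\varepsilon,
\]
(splitting the cases $|z-x|\ge 2\varepsilon$ and $|z-x|<2\varepsilon$), together with $K_\varepsilon(z)\to |z-x|^{1-n}\chi_{B(x,r')}(z)$ a.e.; then dominated convergence against $\|Db\|$ gives $\limsup_\varepsilon\int\|Db_\varepsilon(y)\|\,|y-x|^{1-n}\dd y\le\int_{B(x,r)}\|Db(y)\|\,|y-x|^{1-n}\dd y$ (one may of course assume the right-hand side of the lemma finite, otherwise there is nothing to prove, and one should work on $B(x,r')$, $r'<r$, where $b_\varepsilon$ is defined, letting $r'\uparrow r$ at the end). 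With that bound supplied, your proof is complete; note also that you only obtain a $\limsup$ inequality, not the ``convergence up to subsequences'' you claim, but that is all the argument needs. The trade-off between the two routes: the paper's truncation trick avoids any analysis of the mollified singular kernel at the cost of an extra limit in $\epsilon$, while your route is more direct but requires the uniform domination estimate above.
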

\begin{proof} We assume for simplicity $n\geq 2$.
	From the same argument of \cite[Lemma 1, Sec.4.5.2]{MR3409135}, based on a radial integration,
	 for all $x\in\R^n$, $r>0$, $\epsilon\in (0,1)$ and 
	$f\in C^1(B(x,r))$ we have
	\begin{equation}\label{eq03201820b1}
	\begin{split}
	\ave_{B(x,r)\setminus B(x,\epsilon r)} |f(x)-f(y)| \dd y 
	&\le\int_{\epsilon r}^r\ave_{B(x,s)} |y-x||D f(y)| \dd y \dd s\\
	&\leq
	\int_{\epsilon r}^rs\ave_{B(x,s)} |D f(y)| \dd y \dd s\, .
	\end{split}
	\end{equation}
	Assume now that $f\in L^1_{\rm loc}(\Omega)$, $x$ is a Lebesgue point of $f$, $B(x,r)\Subset\Omega$  and $f\in W^{1,1}(B(x,r))$.  Let $\delta:=\,{\rm dist}(B(x,r),\R^n\setminus\Omega) $,  $\Omega_\delta:=\,\{x\in\Omega:\,{\rm dist}(x,\R^n\setminus\Omega)>\,\delta/2\}$ and $(\rho_h)_h$ be a sequence of mollifiers in $\R^n$. Then it is well-defined the sequence of regularized functions $f_h:=\,f*\rho_h:\,\Omega_\delta\to\R$ if $h>\,1/\delta$ and it  satisfies the following properties: $f_h\in C^\infty(\Omega_\delta)$,
	\begin{equation}\label{fhconvfLP}
	f_h(z)\to f(z)\text{, for each $z\in \Omega_\delta$ Lebegue point of $f$}\text{, as }h\to\infty,
	\end{equation}
	\begin{equation}\label{fhconfW11}
	f_h\to f\text{ in }W^{1,1}(B(x,r))\,.
	\end{equation}
	Applying \eqref{eq03201820b1} with $f\equiv f_h$ and let $h\to\infty$, by \eqref{fhconvfLP} and \eqref{fhconfW11}, we get the same inequality when $f\in W^{1,1}(B(x,r))$ and $x$ is a Lebesgue point of $f$. Now
	 we can let $\epsilon\to 0$ and use Fubini's
	theorem in the right hand side to get
	\begin{equation}\label{eq03201820b}
	\ave_{B(x,r)} |f(x)-f(y)| \dd y 
	\le \frac{1}{n} \int_{B(x,r)} \frac{ |D f(y)| }{|x-y|^{n-1}} \dd y \, .
	\end{equation}
	
	Finally, notice that there is a constant $c_n$ such that, if $M$ is a $n\times n$ matrix with rows $M_j$, then $\sum_j|M_j|\le c_n\|M\|$.
	Therefore,
	\begin{align*}
	\ave_{B(x,r)} |b(x)-b(y)| \dd y 
	&\le \sum_{j=1}^n \ave_{B(x,r)} |b_j(x)-b_j(y)| \dd y \\
	&\le \frac 1n\sum_{j=1}^n \int_{B(x,r)} \frac{ |D b_j(y)| }{|x-y|^{n-1}} \dd y 
	\le \frac{c_n}{n}\ \int_{B(x,r)} \frac{ \|D b(y)\| }{|x-y|^{n-1}} \dd y ,
	\end{align*}
	which proves~\eqref{eq03201820} with $C_n=c_n/n$.
\end{proof}

Recall Jensen's inequality 
\begin{equation}\label{eq5ea98b05}
\Phi\left(\int_X\chi(x)\dd\mu(x) \right) \le \int_X \Phi(\chi(x)) \dd\mu(x) ,
\end{equation} 
where $\Phi:[0,+\infty)\to[0,+\infty)$ is a convex function, $\mu$ is a probability measure on $X$ and $\chi:X\to\R$ 
is a $\mu$-measurable non-negative function.
The following lemma is a direct application of Jensen's inequality.

\begin{lemma}
	Let $\Phi:[0,+\infty)\to[0,+\infty)$ be a convex function.
	If $n\ge1$, $x\in\R^n$, $r>0$, $\chi\in \Lbs^1_\loc(\R^n)$ is non-negative, then
	\begin{equation}\label{eq03202019}
	\Phi\left( \frac1r \int_{B(x,r)} \frac{\chi(z)}{|z-x|^{n-1}} \dd z \right)
	\le \frac{1}{r\sigma_{n-1}} \int_{B(x,r)} \frac{ \Phi(\sigma_{n-1}\chi(z)) }{ |x-z|^{n-1} } \dd z.
	\end{equation}
\end{lemma}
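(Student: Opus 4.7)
The plan is to apply Jensen's inequality (stated just above as \eqref{eq5ea98b05}) with an appropriate probability measure on $B(x,r)$. The key is to recognize that the kernel $|z-x|^{-(n-1)}$ is integrable on $B(x,r)$ and its normalization constant is exactly $\sigma_{n-1} r$.

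First I would compute, via polar coordinates centered at $x$,
\[
\int_{B(x,r)} \frac{1}{|z-x|^{n-1}}\dd z
= \int_0^r \int_{\partial B(x,s)} \frac{1}{s^{n-1}} \dd \mathcal{H}^{n-1}\dd s
= \int_0^r \sigma_{n-1}\dd s = \sigma_{n-1}\,r.
\]
Hence
\[
\mu := \frac{1}{\sigma_{n-1}\,r} \cdot \frac{1}{|z-x|^{n-1}}\,\Lbm^n\res B(x,r)
\]
is a probability measure on $B(x,r)$.

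Next, I would apply Jensen's inequality \eqref{eq5ea98b05} to the non-negative $\mu$-measurable function $\tilde\chi(z):=\sigma_{n-1}\chi(z)$, noting that
\[
\int_{B(x,r)} \tilde\chi(z)\dd\mu(z)
= \frac{1}{\sigma_{n-1}\,r}\int_{B(x,r)} \frac{\sigma_{n-1}\chi(z)}{|z-x|^{n-1}}\dd z
= \frac{1}{r}\int_{B(x,r)} \frac{\chi(z)}{|z-x|^{n-1}}\dd z.
\]
Therefore Jensen's inequality yields precisely
\[
\Phi\!\left(\frac{1}{r}\int_{B(x,r)} \frac{\chi(z)}{|z-x|^{n-1}}\dd z\right)
\le \int_{B(x,r)} \Phi(\sigma_{n-1}\chi(z))\dd\mu(z)
= \frac{1}{r\sigma_{n-1}} \int_{B(x,r)} \frac{\Phi(\sigma_{n-1}\chi(z))}{|x-z|^{n-1}}\dd z,
\]
which is the claimed inequality.

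There is no serious obstacle here; the only subtle point is picking the right normalization so that the factor $\sigma_{n-1}$ ends up inside $\Phi$ (forcing us to apply Jensen to $\sigma_{n-1}\chi$ rather than to $\chi$). For $n=1$ the kernel is identically $1$, and the same argument (with $\sigma_0=2$ interpreted as the counting measure on $\{\pm1\}$) works formally, although in that case the statement reduces to the trivial Jensen inequality on $B(x,r)$ with Lebesgue average.
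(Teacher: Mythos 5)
Your proof is correct and follows essentially the same route as the paper: compute $\int_{B(x,r)}|z-x|^{-(n-1)}\dd z=\sigma_{n-1}r$ by polar coordinates, normalize the kernel into a probability measure, and apply Jensen's inequality \eqref{eq5ea98b05} to $\sigma_{n-1}\chi$. The paper's proof is just a terser version of exactly this argument, so there is nothing to add.
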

\begin{proof}
	Using polar coordinates, one easily checks that
	\[
	\int_{B(0,r)}\frac{1}{|z|^{n-1}} \dd z = r\sigma_{n-1}.
	\]
	Therefore, the measure $\dd\mu:=\frac{ \chi_{B(x,r)}(z) }{r\sigma_{n-1}} \frac{\dd z}{|x-z|^{n-1}}$ is a probability measure on $\R^n$ and Jensen's inequality~\eqref{eq5ea98b05} applies.
\end{proof}

\begin{lemma}\label{lem03211603}
Let $\Phi:[0,+\infty)\to[0,+\infty)$ be a convex non-decreasing function.
Let  $o\in\R^n$, $R>0$ and $b\in W^{1,1}(B(o,2R);\R^n)$.
Then, for all $\alpha\in(1,\frac{n}{n-1})$, and all $x,\,y\in B(o,R)$ distinct Lebesgue points of $b$, one has
\begin{equation}\label{eq03211539}
\begin{aligned}
\Phi\left( \frac{|b(x)-b(y)|}{|x-y|}  \right)
\le \frac{1}{\sigma_{n-1}} \left( \int_{B(o,2R)} \frac1{|z|^{\alpha(n-1)}} \dd z \right)^{1/\alpha}
\times \hspace{3cm} \\  \hspace{4cm} \times 
\frac1{|x-y|}
\left( \int_{B(o,2R)} \Phi\left( \kappa_n \|D b(z)\| \right)^{\frac{\alpha}{\alpha-1}} \dd z \right)^{\frac{\alpha-1}{\alpha}}.
\end{aligned}
\end{equation}
		
In the case $n=1$, the estimate~\eqref{eq03211539} is understood to hold for $\alpha\in(1,+\infty)$ and we also have
\begin{equation}\label{eq5eba52e8}
\Phi\left( \frac{|b(x)-b(y)|}{|x-y|}  \right)
\le 
\frac1{2|x-y|}
\int_{B(o,2R)} \Phi\left( \kappa_n \|D b(z)\| \right) \dd z .
\end{equation}
\end{lemma}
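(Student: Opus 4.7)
Set $r:=|x-y|$. The scheme is to dominate $|b(x)-b(y)|$ by a symmetric combination of Riesz potentials of $\|Db\|$ centred at $x$ and at $y$, then apply $\Phi$ using the two preceding lemmas.

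For the first step I would exploit that the lens $B(x,r)\cap B(y,r)$ has Lebesgue measure $\tau_n r^n$ by the scaling definition of $\tau_n$. Averaging the triangle inequality $|b(x)-b(y)|\le|b(x)-b(z)|+|b(y)-b(z)|$ over this lens and then enlarging each integration domain to the full ball, which costs a factor $\omega_n/\tau_n$, yields
\begin{equation*}
|b(x)-b(y)|\le\frac{\omega_n}{\tau_n}\Bigl(\ave_{B(x,r)}|b(x)-b(z)|\dd z+\ave_{B(y,r)}|b(y)-b(z)|\dd z\Bigr).
\end{equation*}
Since $x$ and $y$ are Lebesgue points of $b$ and $B(x,r)\cup B(y,r)\subset B(o,2R)$ (in the regime $|x-y|\le R$), Lemma~\ref{lem03211522} converts each average into $C_n\int_{B(x,r)}\|Db(z)\||x-z|^{-(n-1)}\dd z$, and the analogous expression at $y$.

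Next, I divide by $r$, apply the non-decreasing $\Phi$, and split via the midpoint convexity bound $\Phi\bigl(\tfrac{a+b}{2}\bigr)\le\tfrac12\Phi(a)+\tfrac12\Phi(b)$, with $a,b$ chosen so that each of the resulting two terms has the form $\Phi\bigl(\tfrac1r\int_{B(\cdot,r)}\chi(z)|\cdot-z|^{-(n-1)}\dd z\bigr)$ for $\chi(z):=\tfrac{2\omega_nC_n}{\tau_n}\|Db(z)\|$. The dimensional bookkeeping is arranged precisely so that $\sigma_{n-1}\chi=\kappa_n\|Db\|$, and then inequality~\eqref{eq03202019} converts each such term into $\frac{1}{r\sigma_{n-1}}\int_{B(\cdot,r)}\Phi(\kappa_n\|Db(z)\|)|\cdot-z|^{-(n-1)}\dd z$.

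Finally, for $n\ge 2$ I would apply H\"older's inequality with exponents $\alpha$ and $\alpha':=\alpha/(\alpha-1)$ on each Riesz-type integral. The hypothesis $\alpha<n/(n-1)$ is exactly what makes $|\cdot|^{-\alpha(n-1)}$ integrable on a ball, and translation invariance reduces its integral on $B(x,r)$ to one on $B(0,r)\subset B(0,2R)$; the regular factor $\Phi(\kappa_n\|Db\|)^{\alpha'}$ is integrated on $B(x,r)\cup B(y,r)\subset B(o,2R)$. Summing the two symmetric contributions cancels the factor $1/2$ coming from the convex split and produces~\eqref{eq03211539}. In dimension $n=1$ the weight $|x-z|^{n-1}\equiv 1$ makes the H\"older step unnecessary: one directly merges the two integrals on $B(o,2R)$ and uses $\sigma_0=2$ to recover~\eqref{eq5eba52e8}. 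The main obstacle is keeping the constants exactly aligned so that the pre-factor inside $\Phi$ is precisely $\kappa_n=2\tau_n^{-1}\omega_nC_n\sigma_{n-1}$: the doubling produced by the convex split, the lens-to-ball ratio $\omega_n/\tau_n$, and the $\sigma_{n-1}$ emerging from Jensen's inequality must be calibrated so as to match. A minor geometric restriction $|x-y|\le R$ (needed to ensure $B(x,r)\cup B(y,r)\subset B(o,2R)$) should be noted and either assumed implicitly or handled in the complementary range by a direct estimate exploiting the boundedness of $b$ on $B(o,2R)$ granted by the trivial Sobolev embedding.
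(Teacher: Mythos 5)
Your main argument is the paper's own proof: the same lens $W=B(x,r)\cap B(y,r)$ with $|W|=\tau_n r^n$, the enlargement costing $\omega_n/\tau_n$, then Lemma~\ref{lem03211522}, the Jensen-type estimate~\eqref{eq03202019}, and finally H\"older with exponents $\alpha,\alpha/(\alpha-1)$ (omitted when $n=1$). The only deviation is that you split the two terms by midpoint convexity of $\Phi$ instead of bounding their sum by twice a supremum over the base point, which is cosmetic and yields exactly the same constant $\kappa_n=2\tau_n^{-1}\omega_nC_n\sigma_{n-1}$ and the same right-hand sides~\eqref{eq03211539} and~\eqref{eq5eba52e8}.

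The one substantive problem is your proposed treatment of the regime $|x-y|>R$. You are right that the containment $B(x,r)\cup B(y,r)\subset B(o,2R)$ is only guaranteed when $r=|x-y|\le R$ (the paper does not separate this case: it simply enlarges all domains of integration to $B(o,2R)$, using only $r<2R$ for the kernel factor). But the fallback you suggest does not work: there is no ``trivial Sobolev embedding'' making $b\in W^{1,1}(B(o,2R);\R^n)$ bounded when $n\ge 2$, and even where an $L^\infty$ bound is available (e.g.\ $n=1$) it bears no relation to the specific right-hand side of~\eqref{eq03211539}, which must involve $\Phi(\kappa_n\|Db\|)$. If you want to handle $R<|x-y|<2R$ explicitly, a workable route is to chain through an intermediate Lebesgue point near the midpoint of the segment $[x,y]$ (each sub-pair then has separation $\le R$), at the cost of a harmless adjustment of constants, or to state the lemma with the larger ball in the hypothesis; appealing to boundedness of $b$ is not an option.
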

\begin{proof} Set $r:=|x-y|>0$ and $W=B(x,r)\cap B(y,r)$.
Notice that $|W|=\tau_nr^n$.
Then
\begin{align*}
|b(x)-b(y)| 
&\le \ave_W |b(x)-b(z)| \dd z + \ave_W |b(y)-b(z)| \dd z \\
&\le \frac{\omega_n}{\tau_n} \left(\ave_{B(x,r)} |b(x)-b(z)| \dd z + \ave_{B(y,r)} |b(y)-b(z)| \dd z \right) \\
&\le \frac{2\omega_n}{\tau_n} \sup_{w\in B(o,R)} \ave_{B(w,r)} |b(w)-b(z)| \dd z .
\end{align*}
	
Since $\Phi$ is non-decreasing, we have:
\begin{align*}
\Phi\left( \frac{|b(x)-b(y)|}{|x-y|}  \right)
&\le \sup_{w\in B(o,R)} \Phi\left( \frac{2\omega_n}{r\tau_n} \ave_{B(w,r)} |b(w)-b(z)| \dd z \right) \\
&\overset{\!\!\!\text{by }\eqref{eq03201820}}{\le}
\sup_{w\in B(o,R)} \Phi\left( \frac{2\omega_nC_n}{r\tau_n} \int_{B(w,r)} \frac{ \|D b(z)\| }{|w-z|^{n-1}} \dd z \right) \\
&\overset{\!\!\!\text{by }\eqref{eq03202019}}{\le}
\frac{1}{r\sigma_{n-1}} \sup_{w\in B(w,r)} \int_{B(w,r)} 
\Phi\left( \frac{2\omega_nC_n\sigma_{n-1}}{\tau_n}\,\|D b(z)\| \right) \frac1{|w-z|^{n-1}} \dd z \,.
	\end{align*}
	
If $n=1$, we have already obtained~\eqref{eq5eba52e8}.
However, if $\alpha>1$, we apply the Hölder inequality to obtain
\begin{multline*}
\int_{B(w,r)} \Phi\left( \kappa_n \|D b(z)\| \right) \frac1{|w-z|^{n-1}} \dd z \\
\le \left( \int_{B(w,r)} \Phi\left( \kappa_n \|D b(z)\| \right)^{\frac{\alpha}{\alpha-1}} \dd z \right)^{\frac{\alpha-1}{\alpha}}
\times \left( \int_{B(w,r)} \frac1{|w-z|^{\alpha(n-1)}} \dd z \right)^{1/\alpha} \\
\le \left( \int_{B(o,2R)} \Phi\left( \kappa_n \|D b(z)\| \right)^{\frac{\alpha}{\alpha-1}} \dd z \right)^{\frac{\alpha-1}{\alpha}}
\times \left( \int_{B(0,2R)} \frac1{|z|^{\alpha(n-1)}} \dd z \right)^{1/\alpha} .
\end{multline*}
Notice that $\int_{B(0,2R)} \frac1{|z|^{\alpha(n-1)}} \dd z < \infty$ if and only if $\alpha(n-1)<n$, i.e., $\alpha<\frac{n}{n-1}$ when $n>1$ or $\alpha<\infty$ when $n=1$. Applying this estimate to the former inequality, we obtain~\eqref{eq03211539}.
\end{proof}

\begin{proof}[Proof of Theorem~\ref{thm5e999915}]
Notice that  assumption~\ref{convTheta} implies that $\Theta$ is also convex, and condition~\eqref{condBI} is equivalent to
\begin{equation}\label{eq5ed11194}
\Theta^{-1}(s_1s_2) \le C_\Theta\, \Theta^{-1}(s_1) \Theta^{-1}(s_2)
\qquad\forall s_1,\,s_2\ge\Theta(C_\Theta) .
\end{equation}
Let $n>\,1$, fix $\alpha\in(1,\frac{n}{n-1})$ as in the assumption~\ref{convTheta}, 
so that $\Phi(s):=\Theta(s/\kappa_n)^{\frac{\alpha-1}{\alpha}}$ is convex, where $\kappa_n$ is the constant defined 
at the beginning of the section.
	
Fix $o\in\Omega$ and the corresponding $R>0$ and $c>0$ as in \eqref{eq5ea991ab}. We claim that 
the space continuous representative $\tilde{b}$ is well posed in
$I\times B(o,R)$.  Since $b:\,I\times B(o,R)\to\R^n$ is well posed and satisfies~\eqref{eq5ea991ab} if and only if $c\,b$ does, in the proof of the
claim we can assume with no loss of generality $c=1$.
Applying Lemma~\ref{lem03211603} 
with the above $\alpha\in(1,\frac{n}{n-1})$ and $\Phi$, we have, for almost every $t$ and  all $x,\,y\in B(o,R)$ distinct Lebegue points of 
$b(t,\cdot)$, one has
\[
\Theta\left( \frac{|b(t,x)-b(t,y)|}{\kappa_n |x-y|}  \right)^{\frac{\alpha-1}{\alpha}}
\le C(n,R,\alpha) \frac1{|x-y|}
\left( \int_{B(o,2R)} \Theta\left( \|D b(t,z)\| \right) \dd z \right)^{\frac{\alpha-1}{\alpha}} .
\]
We write $s_1\vee s_2$ for $\max\{s_1,s_2\}$ and $s_1\wedge s_2$ for $\min\{s_1,s_2\}$. Then, by applying~\eqref{eq5ed11194} 
twice we obtain that
\begin{multline*}
\frac{|b(t,x)-b(t,y)|}{\kappa_n|x-y|} \le C_\Theta^2 \Theta^{-1}(\Theta(C_\Theta)\vee C(n,R,\alpha)^{\frac{\alpha}{\alpha-1}})
\times\\\times
\Theta^{-1}\left(\Theta(C_\Theta)\vee  \int_{B(o,2R)} \Theta\left( \|D b(t,z)\| \right) \dd z \right) \times \\ \times
\Theta^{-1}\left(\Theta(C_\Theta)\vee \left(\frac1{|x-y|}\right)^{\frac{\alpha}{\alpha-1}}\right) .
\end{multline*}
If we set 
\begin{equation}\label{modcontomega}
\omega(\delta) := \delta\Theta^{-1}\left(\Theta(C_\Theta)\vee \left(\frac1{\delta}\right)^{\frac{\alpha}{\alpha-1}}\right) 
\end{equation}
and
\begin{equation}\label{functmodcontomega}
\varphi(t) := \kappa_n C_\Theta^2 \Theta^{-1}(\Theta(C_\Theta)\vee C(n,R,\alpha)^{\frac{\alpha}{\alpha-1}})\,\Theta^{-1}\left(\Theta(C_\Theta)\vee \int_{B(o,2R)} \Theta\left( \|D b(t,z)\| \right) \dd z \right)\, ,
\end{equation}
we obtain that $\omega\in C^0((0,\infty))$ and  $b$ satisfies
\begin{equation}\label{OLP}
|b(t,x)-b(t,y)|\le\,\varphi(t)\,\omega(|x-y|) 
\end{equation}
 for each $x,\,y\in B(o,R)$ Lebesgue points of $b(t,\cdot)$. From \eqref{OLP}, it follows that $b(t,\cdot)$ is uniformly continuous on the set of Lebesgue points of $b(t,\cdot)$ contained in $B(o,R)$. Since this set is dense in $B(o,R)$, it follows that there exists a unique continuous extension $\tilde{b}(t,\cdot):\,B(o,R)\to\R^n$  still satisfying \eqref{OLP} on the whole $B(o,R)$. Therefore we can conclude that $\tilde{b}(t,\cdot):\,\R^n\to\R^n$  is continuous, condition holds \eqref{OLP} on $B(o,R)$ and
 \begin{equation}\label{b=tildeb}
 \tilde b=\,b\text{ a.e.~on } I\times B(o,R)\,.
 \end{equation}
Moreover, it is clear that $\varphi\in \Lbs^1_\loc(I;\R)$, because $\Theta^{-1}$ is concave and the function in~\eqref{eq5ea991ab} belongs to  $\Lbs^1_\loc(I;\R)$ by assumption.
	
We claim that condition~\ref{item04021113} implies 
$\int_0^1\frac1{\omega(\delta)}\dd \delta = \infty$ and $\lim_{\delta\to0}\omega(\delta)=0$.
Indeed, on the one hand, using the monotonicity of $\Theta^{-1}$ and the change of variables $s=\Theta^{-1}((1/\delta)^{\frac{\alpha}{\alpha-1}})$, we obtain, if $\bar\delta:=\Theta(C_\Theta)^{\frac{1-\alpha}{\alpha}}\wedge 1$ and $\bar s=\Theta^{-1}((1/\bar\delta)^{\frac{\alpha}{\alpha-1}})$,
	\[
	\begin{split}
	\int_0^1\frac1{\omega(\delta)}\dd \delta
	\ge 
	\int_0^{\bar\delta} \frac{1}{\delta\Theta^{-1}((1/\delta)^{\frac{\alpha}{\alpha-1}})} \dd\delta
	= \frac{\alpha-1}{\alpha} \int_{\bar s}^\infty \frac{\Theta'(s)}{s\Theta(s)} \dd s 
	= +\infty.
	\end{split}
	\]
On the other hand, to prove ${\displaystyle\lim_{\delta\to0}}\omega(\delta)={\displaystyle\lim_{\delta\to0}} \delta\Theta^{-1}((1/\delta)^{\frac{\alpha}{\alpha-1}}) = 0$, we only need to show that the function $\delta\mapsto \delta\Theta^{-1}((1/\delta)^{\frac{\alpha}{\alpha-1}})$ is monotone for $\delta$ small enough, because the above integral is not bounded. Again with the change of variables $s=\Theta^{-1}((1/\delta)^{\frac{\alpha}{\alpha-1}})$, we see that this monotonicity is equivalent to the monotonicity of the function
$\displaystyle{\phi:s\mapsto s^{-1}\Theta(s)^{-\frac{\alpha-1}{\alpha}}}$ for $s$ large.
Inspecting the derivative of $\phi$, we see that
\[
\phi'(s) = \frac{ \Theta(s)^{-\frac{\alpha-1}{\alpha}} }{ s^2 } 
\left( -\frac{\alpha-1}{\alpha} \frac{\Theta'(s)}{s\Theta(s)} - 1 \right) < 0 ,
\]
and so $\phi$ is monotone for $s$ sufficiently large.  
We conclude that, for each $o\in\Omega$ and $t$ such that $\varphi(t)<+\infty$
 there exists $R>0$ such that $B(o,R)\subset\Omega$ and $\tilde{b}(t,\cdot):\,B(o,R)\to\R^n$ satisfying Osgood's condition 
\begin{equation}\label{OBoR}
|\tilde{b}(t,x)-\tilde{b}(t,y)|\le\,\varphi(t)\,\omega(|x-y|) \text{ for each }x,y\in B(o,R)\,.
\end{equation}

\begin{equation}\label{estimtildeb}
\begin{split}
|\tilde b(t,x)|&\le\, |\tilde{b}(t,x)-\tilde{b}(t,x_0)|+\,|\tilde{b}(t,x_0)|\le\,\varphi(t)\,\omega(|x-x_0|)+\,|\tilde{b}(t,x_0)|\\
&\le\,\omega(2R)\,\varphi(t)+|\tilde{b}(t,x_0)|\text{ for a.e.~}t\in I\text{ and for each }x,\,x_0\in B(o,R)\,.
\end{split}
\end{equation}
Now observe that, since $b\in L^1_{\rm loc}(I\times\Omega;\R^n)$, by \eqref{b=tildeb},  $\tilde b\in L^1_{\rm loc}(I\times B(o,R);\R^n)$.  Thus there exists $x_0\in B(o,R)$ such that $|\tilde b(t,x_0)|\in L^1_{\rm loc}(I)$. 
By \eqref{OBoR}, \eqref{estimtildeb} and Proposition~\ref{prop628f93c2} (i), we obtain that $\tilde{b}:\,I\times B(o,R)\to\R^n$ is well-posed.  
As a consequence also the vector field $\tilde{b}:\,I\times\Omega\to\R^n$  is well-posed.

Moreover,  if  \eqref{estimbuniftA} holds, it also follows that
\begin{equation*}
|\tilde b(t,x)|\le\,m(t)\text{ for a.e.~}t\in I, \text{for a.e.~}x\in B(o,R)\,.
\end{equation*}
Thus, by applying now Proposition~\ref{prop628f93c2} (iv), we can conclude that the vector field $\tilde{b}:\,\overline I\times B(o,R)\to\R^n$ is well-posed, which implies that  $\tilde{b}:\,\overline I\times \Omega\to\R^n$  is well-posed, too.

 When $n=1$, let $\Phi(s):=\Theta(s/\kappa_n)$. Then we can repeat the same arguments of the previous case,   showing that the space
 continuous representativre of $b$ satisfies Osgood's condition.
\end{proof}

The next proposition shows why we could not use a standard Sobolev embedding in the proof of Theorem~\ref{thm5e999915} to obtain a stronger Sobolev regularity for $b$.
See also Remark~\ref{rem628c97ad} below.

\begin{proposition}\label{prop627d11fd}
	Condition~\ref{item04021113} in Theorem~\ref{thm5e999915}
	implies that $\Theta$ cannot have polynomial growth,
	that is, 
	\begin{equation}\label{eq627e380c}
	\forall m\in\N
	\qquad
	\limsup_{s\to\infty} \frac{\Theta(s)}{s^m} =+\infty,
	\end{equation}
	but it does not imply that $\Theta$ has more than polynomial growth.
\end{proposition}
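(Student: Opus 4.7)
The plan is to handle the two assertions of the proposition separately: first a positive implication (that \ref{item04021113} rules out $\Theta(s)\le C s^m$), and then an explicit construction showing the converse fails. The positive part is a short integration by parts; the negative part needs an explicit step-like $\Theta$ whose jumps are positioned at a tower sequence.

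For the first assertion I would argue by contrapositive. Assume $\Theta(s)\le C s^m$ for every $s$ large enough. Since $\Theta$ is locally Lipschitz and positive, an integration by parts (using $(\log\Theta)' = \Theta'/\Theta$) gives
\[
\int_1^R \frac{\Theta'(s)}{s\,\Theta(s)}\dd s
= \int_1^R \frac{(\log\Theta)'(s)}{s}\dd s
= \frac{\log\Theta(R)}{R}-\log\Theta(1)+\int_1^R \frac{\log\Theta(s)}{s^2}\dd s.
\]
Plugging in $\log\Theta(s)\le \log C + m\log s$ makes the boundary term vanish as $R\to\infty$ and keeps the remaining integral uniformly bounded in $R$. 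This contradicts \ref{item04021113}, so $\Theta(s)/s^m$ must be unbounded for every $m$, which is precisely \eqref{eq627e380c}.

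For the second assertion I would construct a counterexample from a tower sequence. Set $s_0 = 1$ and $s_{k+1} = e^{s_k}$, so $s_k\to\infty$ and $s_{k-1}+1<s_k$ for every $k\ge1$. Take $\Theta$ to be a smooth non-decreasing function equal to the constant $s_k$ on $[s_{k-1}+1,\,s_k]$ and interpolating smoothly and monotonically from $s_{k-1}$ to $s_k$ on $[s_{k-1},\,s_{k-1}+1]$. Then $\Theta(s_k)/s_k = 1$, so $\liminf_{s\to\infty}\Theta(s)/s\le 1$ and $\Theta$ is not super-polynomial (with $m_0=1$). On each transition interval, using $\log s_k = s_{k-1}$,
\[
\int_{s_{k-1}}^{s_{k-1}+1}\frac{\Theta'(s)}{s\,\Theta(s)}\dd s
\ge \frac{1}{s_{k-1}+1}\int_{s_{k-1}}^{s_{k-1}+1}\frac{\Theta'(s)}{\Theta(s)}\dd s
= \frac{\log(s_k/s_{k-1})}{s_{k-1}+1}
= \frac{s_{k-1}-\log s_{k-1}}{s_{k-1}+1},
\]
which tends to $1$ as $k\to\infty$. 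Summing over $k$ gives divergence, establishing \ref{item04021113}.

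The delicate calibration is the scale of $s_k$. If $s_k$ grows only geometrically, the contribution $\log(s_k/s_{k-1})/s_{k-1}$ is summable and the integral converges; on the other hand, for $\Theta$ to be polynomial at some subsequence one must place the entire growth of $\Theta$ near the lower endpoint $s_{k-1}$, where division by $s$ costs the least. The tower $s_{k+1}=e^{s_k}$ is precisely the threshold where $\log(s_k/s_{k-1})\sim s_{k-1}$, so each transition contributes a positive amount bounded away from $0$. Verifying that this extremal balance actually works simultaneously in both directions (preserving polynomial values at $s_k$ and making the integral diverge) is the main subtlety; everything else reduces to routine estimates.
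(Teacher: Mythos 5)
Your proof is correct, but both halves take a genuinely different route from the paper's. For the positive implication, the paper splits $\int_1^\infty \Theta'(s)/(s\Theta(s))\dd s$ into unit intervals and performs a discrete summation by parts, ending with a case analysis that produces explicit subsequences along which $\Theta(k_j)\ge \exp(\tfrac12 k_j^{1-\alpha})$ or $\Theta(k_j)\ge\exp(k_j)$ — a quantitative strengthening of \eqref{eq627e380c} — whereas your contrapositive via the continuous integration by parts $\int_1^R (\log\Theta)'(s)\,s^{-1}\dd s=\log\Theta(R)/R-\log\Theta(1)+\int_1^R\log\Theta(s)\,s^{-2}\dd s$ is shorter and fully sufficient for the stated conclusion (local Lipschitzness and $\Theta\ge\Theta(0)>0$ justify the identity, and the polynomial bound is only needed for large $s$). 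For the counterexample, the paper builds a piecewise linear, increasing, \emph{convex} function with $\Theta(k_j)=\exp(k_j^2+k_j)$ on a sparse sequence and $\Theta(x_j)=x_j^2$ in between, checking condition \ref{item04021113} through the lower bound $\int_1^\infty \Theta'/(s\Theta)\dd s\ge -\tfrac12\log\Theta(1)+\sum_k \log\Theta(k)/(k^2+k)$; your plateau construction at the tower $s_{k+1}=e^{s_k}$ is simpler, gives linear (rather than quadratic) growth along a subsequence, and your estimate on the transition intervals correctly yields divergence. What the paper's choice buys is convexity: its example is compatible with assumption \ref{convTheta} (for $n=1$) and is reused verbatim in Remark~\ref{rem628c97ad} to produce a well-posed vector field not in $W^{1,p}_\loc$ for any $p>2$; your $\Theta$, being constant on long plateaus immediately after an increase, is necessarily non-convex, which is fine for the literal statement (only condition \ref{item04021113} is invoked) but would not serve those later purposes. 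Two cosmetic points: define $\Theta$ also on $[0,s_0]$ (e.g.\ $\Theta\equiv 1$) so that it is a positive function on all of $[0,+\infty)$, and note explicitly that the monotone smooth interpolation keeps $\Theta$ locally Lipschitz.
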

\begin{proof}
	On the one hand, we have
	\begin{align*}
	\infty 
	&= \int_1^\infty \frac{\Theta'(s)}{s\Theta(s)} \dd s 
	= \sum_{k=1}^\infty \int_k^{k+1} \frac{\Theta'(s)}{s\Theta(s)} \dd s \\
	&\le \sum_{k=1}^\infty \frac1k \int_k^{k+1} \frac{\Theta'(s)}{\Theta(s)} \dd s 
	= \sum_{k=1}^\infty \frac1k \left(\log(\Theta(k+1))-\log(\Theta(k))\right) \\
	&= -\log(\Theta(1)) + \sum_{k=2}^\infty \frac1{k^2-k} \log(\Theta(k)) 
		+ \limsup_{k\to\infty} \frac{\log(\Theta(k))}{k}
	\end{align*}
	If the series in the last row is infinite, then for every $0<\alpha<1$ there exists a sequence $k_j\to\infty$ such that
	\[
	\frac1{k_j^2-k_j} \log(\Theta(k_j)) \ge \frac1{k_j^{1+\alpha}} ,
	\]
	that is, for all $j$ with $k_j\ge2$,
	\[
	\Theta(k_j) \ge \exp\left( \frac12 k_j^{1-\alpha} \right) .
	\]
	If instead the series in the last row is finite, then $\limsup_{k\to\infty} \frac{\log(\Theta(k))}{k} = \infty$, that is, 
	there exists a sequence $k_j\to\infty$ such that
	\[
	\Theta(k_j) \ge \exp\left( k_j\right) .
	\]
	
	On the other hand, we cannot improve the $\limsup$ in~\eqref{eq627e380c} with a $\liminf$ (or a $\lim$). 
	Indeed,  with a similar estimate as above, we obtain
	\[
	\int_1^\infty \frac{\Theta'(s)}{s\Theta(s)} \dd s
	\ge - \frac12 \log(\Theta(1)) + \sum_{k=2}^\infty \frac{1}{k^2+k} \log(\Theta(k))  .
	\]
	Thus, taking a sequence $\{k_j\}_j$ sparse enough,
	one can construct a piece-wise linear increasing convex function $\Theta:[0,+\infty)\to[0,+\infty)$ such that $\Theta(k_j)=\exp(k_j^2+k_j)$ (thus $\int_1^\infty \frac{\Theta'(s)}{s\Theta(s)} \dd s=\infty$)
	and $\Theta(x_j)=x_j^2$ for some $k_j<x_j<k_{j+1}$ (hence 
	$\liminf_{s\to\infty} \frac{\Theta(s)}{s^2} < \infty$).

	Let us give more details about such construction.
	As a start, define $k_1=1$, $\Theta(1)=\exp(2)$ and $\alpha_1=\exp(2)$.
	Then define $\Theta(t)=\alpha_1t$ for $t\in[0,x_1]$, where $x_1$ is so that $\Theta(x_1)=x_1^2$, that is, $x_1=\alpha_1$.
	Iteratively, given $x_j$ and $\alpha_j$, and the function $\Theta$ defined in $[0,x_j]$ with $\Theta(x_j)=x_j^2$,
	define 
	\begin{align*}
	k_{j+1} &= \lceil 2x_j \rceil \\
	\alpha_{j+1} &= \frac{\exp(k_{j+1}^2+k_{j+1})-x_j^2}{k_{j+1}-x_j} \\
	\Theta(t) &= \Theta(x_j) + \alpha_{j+1} (t-x_j) \text{ for }t\in[x_j,x_{j+1}]\\
	&\text{ where $x_{j+1}$ is so that }
	\Theta(x_j) + \alpha_{j+1} (x_{j+1}-x_j)=x_{j+1}^2
	\end{align*}
	By construction, we have $\Theta(k_{j+1})=\exp(k_{j+1}^2+k_{j+1})$ and $\Theta(x_{j+1})=x_{j+1}^2$.
	Moreover, since $\exp(x^2+x)>x^2$ for $x\ge0$, we have $\alpha_{j+1}>\alpha_j$ and thus the resulting function $\Theta$ is convex;
	in fact, $\Theta$ is the sup of the linear functions $t\mapsto\Theta(x_j) + \alpha_{j+1} (t-x_j)$, $j=1,\dots$, and thus it is convex.
\end{proof}

\begin{remark}\label{rem628c97ad}
	Using the function $\Theta$ constructed in the previous Proposition~\ref{prop627d11fd},
	we can give an example of {\it a continuous vector field $b:\R\to\R$ that is well posed, 
	even though $b\notin W^{1,p}_\loc(\R)$ for all $p>2$}.
	
	Indeed, define $b(t) = \int_0^t \chi(s) \dd s$, where $\chi\in L^1(\R)$ is the function
	\[
	\chi(s) = \sum_{j=1}^\infty  \chi_{I_j}(s)\cdot x_j \text{ where } I_j := \left(\sum_{k=1}^{j-1} \frac{1}{k^2x_k^2} , \sum_{k=1}^{j} \frac{1}{k^2x_k^2}\right) .
	\]
	Notice that, by construction,
	\[
	\exp(4 x_j^2 + x_j)
	\le \exp(k_j^2+k_j) 
	=\Theta(k_{j+1})
	< \Theta(x_{j+1}) = x_{j+1}^2 ,
	\]
	and thus $x_{j+1} > \exp(2x_j^2+x_j) > \exp(x_j) > \exp(j)$ for all $j\ge1$.
	It follows that $\bigcup_{j=1}^\infty I_j$ is bounded and that
	\[
	\int_\R \chi(s)^p \dd s
	= \sum_{j=1}^\infty \frac{x_j^p}{j^2x_j^2}
	= \sum_{j=1}^\infty \frac{x_j^{p-2}}{j^2} 
	\]
	is finite if and only if $p\le 2$.	
\end{remark}

\subsection{A class of subexponential summability types}\label{sec5eb94e93}
Examples of functions $\Theta$ satisfying the properties 
listed in Theorem~\ref{thm5e999915}
are of the form $\FEL_{k,\beta}(s)$ as in~\eqref{sub2expofunct}, as we will show in this section.
It is clear that the subexponential summability 
of type $\FEL_{k,\beta}$ implies the subexponential summability of type $\FEL_{k',\beta'}$ for all $\beta\le\beta'\le 1$ and $k'\ge k$.

If $\left\|D_xb\right\|$ satisfies an exponential summability, then it is well-known that $b$ is well-posed. 
Indeed, in this case, the vector field $b$ satisfies a so-called Log-Lipschitz condition (see, for instance, \cite{MR4263701,MR1920427}).
Theorem~\ref{thm5e999915} extends the well-posedness to subexponential summability order $k\ge1$.
We will show in Section~\ref{sec5eb962e5} that the upper bound on $\beta$ for the subexponential summability order $1$  is in fact necessary; see also  \cite[Section 6]{MR3458193}, \cite[p. 1240]{MR3494396}.

Let us now show that $\FEL_{k,\beta}$ satisfies the assumptions of Theorem~\ref{thm5e999915}. Let us introduce some notation in order to better represent  $\FEL_{k,\beta}$. Let us denote by $E_k:\,\R\to\R$ the \emph{$k${\small th}-iterated exponential  function}, that is, by induction on $k$,
\[
E_1(s):= \exp(s),\quad E_{k+1}(s):= \exp\left(E_k(s)\right)\text{ if }s\in\R,\,k\ge\,1.
\]
Since $\lim_{s\to-\infty} E_1(s) = 0$, notice that $\lim_{s\to-\infty} E_k(s) = E_{k-1}(0)=:s_k$ for all $k>1$, and that $\lim_{k\to\infty}s_k=\infty$.
Denote by $L_k:E_k(\R)\to\R$ the \emph{$k${\small th}-iterated logarithm  function} as the inverse of $E_k$, that is
\[
L_k(s):=E_k^{-1}(s)\text{ if }s\in E_k(\R)=(s_k,+\infty)\,.
\]
Define then $P_k(s):=\prod_{j=1}^kL_j(s)$, so that one can easily check that
\[
	L_{k+1}'(s) = \frac{1}{sP_k(s)},
	\text{ so that }
	P'_k(s) = \frac{P_k(s)}{s} \sum_{j=1}^k \frac1{P_j(s)} .
\]
With this notation, we have 
\[
	\FEL_{k,\beta}(s) = \exp\left(\frac{s}{P_{k-1}(s)L_k^\beta(s)}\right) .
\]
A direct computation shows that
\begin{equation}\label{eq5fa556cf}
	\FEL_{k,\beta}' = \FEL_{k,\beta} H_{k,\beta}
	\quad\text{ where }\quad
	H_{k,\beta}
	= \frac{1}{P_{k-1} L_k^\beta} 
		\left( 1-\sum_{j=1}^{k-1}\frac1{P_j} - \frac{\beta}{P_k} \right) ,
\end{equation}
and that
\begin{equation}\label{eq5fa556ca}
 	H_{k,\beta}'
	= \frac{1}{sP_{k-1} L_k^\beta} 
		\bigg(
			- \Big(\sum_{j=1}^{k-1}\frac1{P_j}+\frac{\beta}{P_k}\Big)
				\Big(1-\sum_{j=1}^{k-1}\frac1{P_j}-\frac{\beta}{P_k}\Big) 
			+ \sum_{j=1}^{k-1}\frac1{P_j} \Big(\sum_{i=1}^j\frac1{P_i}+\frac{\beta}{P_k}\Big) 
		\bigg).
\end{equation}

\begin{proposition}\label{propFEL} 
	The function  $\FEL_{k,\beta}:\,\R\to\R$ satisfies the assumptions of Theorem~\ref{thm5e999915} for each integer $k\ge\,1$ and $0\le\,\beta\le\,1$.
\end{proposition}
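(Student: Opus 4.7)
The plan is to verify conditions~\ref{convTheta}--\ref{item04021113} by an asymptotic analysis based on~\eqref{eq5fa556cf}--\eqref{eq5fa556ca}, choosing $\bar s$ large enough that $\FEL_{k,\beta}$ is smooth and strictly increasing on $[\bar s,\infty)$ and that each condition is met there; the constant extension on $[0,\bar s]$ preserves all three properties because the right derivative at $\bar s$ is nonnegative. The key asymptotics, obtained by expanding the brackets in~\eqref{eq5fa556cf}--\eqref{eq5fa556ca} and noting that every $1/P_j(s)$ tends to $0$, read
\[
H_{k,\beta}(s)=\frac{1+o(1)}{P_{k-1}(s)\, L_k(s)^\beta},\qquad
H_{k,\beta}'(s)=-\frac{c_{k,\beta}+o(1)}{s\,L_1(s)\,P_{k-1}(s)\,L_k(s)^\beta}
\quad\text{as }s\to\infty,
\]
for some $c_{k,\beta}\ge 0$: the dominant term in the second bracket of~\eqref{eq5fa556ca} is $-\sum_{j} 1/P_j-\beta/P_k\sim -1/L_1$, while the remaining terms are $O(1/L_1^2)$.

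For condition~\ref{item04021113}, I would use $\FEL_{k,\beta}'/\FEL_{k,\beta}=H_{k,\beta}$ together with the identity $L_k'(s)=1/(s\, P_{k-1}(s))$ to perform the change of variables $u=L_k(s)$, reducing the divergence to $\int^{\infty} u^{-\beta}\dd u=+\infty$, valid for every $0\le\beta\le 1$. For condition~\ref{item04021111}, taking logarithms, the target becomes $f(s_1)+f(s_2)\le f(C_\Theta\, s_1 s_2)$ with $f:=\log\FEL_{k,\beta}=s/(P_{k-1}(s) L_k(s)^\beta)$. Since $f'=H_{k,\beta}>0$ and $f''=H_{k,\beta}'<0$ on $[\bar s,\infty)$, $f$ is concave there, so midpoint concavity together with monotonicity gives
\[
f(s_1)+f(s_2)\le 2\, f\!\left(\frac{s_1+s_2}{2}\right)\le 2\, f(s_1 s_2)
\qquad\text{for all }s_1,s_2\ge 1,
\]
while the relation $f(s)=s\cdot\bigl(1/(P_{k-1}(s) L_k(s)^\beta)\bigr)$ with the second factor slowly varying yields $f(Cs)/f(s)\to C$ as $s\to\infty$. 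Choosing $C_\Theta\ge 2$ and $\bar s$ large enough thus produces $2 f(s_1 s_2)\le f(C_\Theta\, s_1 s_2)$. Bijectivity of $\FEL_{k,\beta}|_{[C_\Theta,\infty)}$ is immediate from $H_{k,\beta}>0$ on $[\bar s,\infty)$.

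For condition~\ref{convTheta}, set $\gamma:=(\alpha-1)/\alpha\in(0,1/n)$ when $n>1$ and $\gamma:=1$ when $n=1$. A direct differentiation gives
\[
(\FEL_{k,\beta}^\gamma)''=\gamma\,\FEL_{k,\beta}^\gamma\bigl(\gamma H_{k,\beta}^2+H_{k,\beta}'\bigr),
\]
so convexity of $\FEL_{k,\beta}^\gamma$ on $[\bar s,\infty)$ reduces to the pointwise bound $\gamma H_{k,\beta}^2+H_{k,\beta}'>0$. From the asymptotics above,
\[
\frac{|H_{k,\beta}'(s)|}{H_{k,\beta}(s)^2}\sim \frac{c_{k,\beta}\, P_{k-1}(s)\, L_k(s)^\beta}{s\, L_1(s)}
=\frac{c_{k,\beta}\, L_2(s)\cdots L_{k-1}(s)\, L_k(s)^\beta}{s}\xrightarrow{s\to\infty}0,
\]
so $\gamma H_{k,\beta}^2>|H_{k,\beta}'|$ on $[\bar s,\infty)$ for \emph{any} fixed $\gamma>0$, and in particular for the admissible range $\gamma<1/n$. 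The main obstacle is precisely this step: one has to extract the subleading behaviour of $H_{k,\beta}'$ from the rather long formula~\eqref{eq5fa556ca} and verify that the positive term $\gamma H_{k,\beta}^2$ dominates $|H_{k,\beta}'|$ even for $\gamma$ arbitrarily small; the ratio above makes this dominance uniform in $\gamma$, but the bookkeeping of the expansion is the most delicate bit of the proof.
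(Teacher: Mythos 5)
Your verification of conditions~\ref{convTheta} and~\ref{item04021113} is correct and essentially the same as the paper's: the paper also differentiates $\FEL_{k,\beta}^\gamma$ to get $\gamma\FEL_{k,\beta}^\gamma(\gamma H_{k,\beta}^2+H_{k,\beta}')$ and concludes convexity for large $s$ simply from $H_{k,\beta}'/H_{k,\beta}^2\to0$, which is immediate from the extra factor $1/s$ in~\eqref{eq5fa556ca}; so the ``delicate bookkeeping'' you worry about at the end is no heavier than the ratio computation you already wrote. Likewise, for~\ref{item04021113} the paper uses the same asymptotics $\FEL_{k,\beta}'/(s\FEL_{k,\beta})\sim 1/(sP_{k-1}(s)L_k(s)^\beta)$ and the same substitution $u=L_k(s)$. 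Where you genuinely diverge is condition~\ref{item04021111}: the paper proves the stronger pointwise inequality $\FEL_{k,\beta}(s)\FEL_{k,\beta}(t)\le\FEL_{k,\beta}(st)$ for $s,t$ large, by showing $L_j(st)\le L_j(s)L_j(t)$ (induction from $\log(st)=\log s+\log t\le\log s\log t$), hence $P_{k-1}(st)L_k(st)^\beta\le P_{k-1}(s)L_k(s)^\beta\,P_{k-1}(t)L_k(t)^\beta$, and then observing that after dividing by $st$ the left-hand side tends to $0$ while the right-hand side is at least $1$; this needs no constant beyond the ``large enough'' threshold. Your alternative via concavity of $f=\log\FEL_{k,\beta}$ together with slow variation also works, but contains one small slip: with $C_\Theta=2$ the last step $2f(u)\le f(C_\Theta u)$ fails for \emph{every} $u$, since $f(2u)/f(u)=2\,P_{k-1}(u)L_k(u)^\beta/\bigl(P_{k-1}(2u)L_k(2u)^\beta\bigr)<2$; because $f(Cu)/f(u)\to C$, you must fix $C_\Theta>2$ strictly (say $C_\Theta=3$) and then enlarge $\bar s$ so that the threshold is below $C_\Theta^2\le s_1s_2$. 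With that correction your argument is complete; the paper's route buys an exact submultiplicativity statement with elementary induction on iterated logarithms, while yours trades that induction for a soft concavity-plus-regular-variation argument at the price of the explicit constant $C_\Theta$.
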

\begin{proof}
{\bf Monotonicity of $\FEL_{k,\beta}$:}
From~\eqref{eq5fa556cf} it is evident that $\FEL_{k,\beta}'$ is positive for $s$ large enough and thus $\FEL_{k,\beta}$ is 
strictly increasing in $[\alpha,+\infty)$ for $\alpha$ large enough.

{\bf Verification of~\ref{convTheta}:} 
If $\gamma>0$, then
\[
\frac{\dd^2}{\dd s^2}\FEL_{k,\beta}(s)^\gamma
= \gamma\FEL_{k,\beta}(s)^\gamma \left(\gamma H_{k,\beta}^2 + H_{k,\beta}' \right) .
\]
Because of the presence of the factor $1/s$ in~\eqref{eq5fa556ca}, one can see that
$\lim_{s\to\infty}\frac{H_{k,\beta}'}{H_{k,\beta}^2}=0$ and thus $\frac{\dd^2}{\dd s^2}\FEL_{k,\beta}(s)^\gamma$ is positive for $s$
large enough.
Thus we obtain that $\FEL_{k,\beta}(s)^\gamma$ is convex for $s$ large.

{\bf Verification of~\ref{item04021111}:} 
Firstly, notice that $\log(st) = \log(s)+\log(t) \le\log(s)\log(t)$ for all $s,t>e$.
Therefore, by induction on $k$, we have $L_k(st)\le L_k(s)L_k(t)$ for all $s,t>E_n(1)$.
Hence, $P_k(st) = \prod_{j=1}^kL_j(st) \le \prod_{j=1}^kL_j(s)L_j(t) = P_k(s)P_k(t)$,
or all $s,t>E_n(1)$.

Secondly, condition~\eqref{condBI}, i.e., $\FEL_{k,\beta}(ts)\le\FEL_{k,\beta}(s)\FEL_{k,\beta}(t)$ for $s,t$ large enough, is equivalent to 
\[
\frac{s}{P_{k-1}(s)L_k(s)^\beta} + \frac{t}{P_{k-1}(t)L_k(t)^\beta}
	\le \frac{ st }{ P_{k-1}(st)L_k(st)^\beta } ,
\]
that is
\begin{equation}\label{eq5fa5649a}
\frac{P_{k-1}(t)L_k(t)^\beta}{t} + \frac{P_{k-1}(s)L_k(s)^\beta}{s}
	\le \frac{ P_{k-1}(s)L_k(s)^\beta P_{k-1}(t)L_k(t)^\beta}{P_{k-1}(st)L_k(st)^\beta} .
\end{equation}

Thirdly, on the one hand we have $\lim_{t\to\infty}\frac{P_{k-1}(t)L_k(t)^\beta}{t} = 0$, and thus the left-hand side of~\eqref{eq5fa5649a} smaller than 1 for $s$ and $t$ large.
On the other hand, by the initial observation,
\[
\frac{ P_{k-1}(s)L_k(s)^\beta P_{k-1}(t)L_k(t)^\beta}{P_{k-1}(st)L_k(st)^\beta} 
\ge
\frac{ P_{k-1}(s)L_k(s)^\beta P_{k-1}(t)L_k(t)^\beta}{P_{k-1}(s)P_{k-1}(t)L_k(s)^\beta L_k(t)^\beta} 
= 1 .
\]
So, inequality~\eqref{eq5fa5649a} holds true and so condition~\eqref{condBI}.

{\bf Verification of~\ref{item04021113}:}
Notice that, by~\eqref{eq5fa556cf}, for $s>E_k(1)$,
\[
\frac{ \FEL_{k,\beta}'(s) }{ s\FEL_{k,\beta}(s) } 
\sim_{s\to\infty} \frac{1}{sP_{k-1}(s)L_k(s)^\beta} .
\]
Thus, since $\int_{E_k(1)}^\infty \frac{1}{sP_{k-1}(s)L_k(s)^\beta} \dd s=\infty$ if and only if $\beta\le1$, we also have $\int_1^\infty\frac{ \FEL_{k,\beta}'(s) }{ s\FEL_{k,\beta}(s) } \dd s = +\infty$ if and only if $\beta\le 1$.
\end{proof}

\begin{remark}\label{sharpbehavsubexpo}
Observe that, if~\ref{item04021113} holds, then 
\begin{equation}\label{condTheta}
	\limsup_{s\to\infty}\frac{P_{k-1}(s)L_k(s)^{1+\alpha} \Theta'(s)}{\Theta(s)} = \infty
	\text{ for every  }\alpha>0\text{ and }k\ge1\,.
\end{equation}
\end{remark}

\section{Regularity of the flow with subexponential summability}\label{sec5edfede8}

This section is devoted to the proof of Theorem~\ref{thm5eb900db}
and its consequence in dimension 1 as written in Theorem~\ref{thm5ea9862b}.
Let us recall that, if $\Phi:\,[0,+\infty)\to [0,+\infty)$ is an increasing homeomorphism, so that $\Phi(0)=\,0$ and 
$\lim_{t\to +\infty}\Phi(t)=+\infty$, the {\it Orlicz space} $\Lbs^\Phi(\R^n)$ is the space of measurable functions $f:\R^n\to\R$ for which the \emph{Luxembourg norm}
\[
\|f\|_{\Lbs^\Phi}:=\,\inf\left\{\lambda>0:\,\int_{\R^n}\Phi\left(\frac{f(x)}{\lambda}\right)\dd x\le\,1\right\}
\]
is finite.  $L_\loc^\Phi(\R^n)$ will denote, as usual, the space of measurable functions $f$ such that $f\,\chi_K\in \Lbs^\Phi(\R^n)$ for each compact set $K\subset\R^n$.  If
\begin{equation}\label{exptoverlogt}
\Phi(t)=\exp\left(\frac{t}{\log^+t}\right)-1,
\end{equation}
then we will denote the obtained $\Lbs^\Phi(\R^n)$ (respectively  $\Lbs_\loc^\Phi(\R^n)$) by ${\rm Exp}\bigl({L}/{\log L}\bigr)$ (respectively ${\rm Exp}_\loc\bigl({L}/{\log L}\bigr)$).  When changing the reference measure  from the Lebegues measure to a Radon measure $\mu$ on $\R^n$, we will denote $\Lbs^\Phi(\R^n,\mu)$ (respectively $L_\loc^\Phi(\R^n,\mu)$) the associated Orlicz space and, if $\Phi$ is as in~\eqref{exptoverlogt}, by ${\rm Exp}_\mu\bigl({L}/{\log L}\bigr)$ (respectively ${\rm Exp}_{\mu, \loc}\bigl({L}/{\log L}\bigr)$).

\begin{proof}[Proof of Theorem~\ref{thm5eb900db}] 
We assume with no loss of generality that $b$ coincides
with its space continuous representative. 
It is easy to see that, from assumptions \eqref{globexsistb} and \eqref{eq5eb95263}, it follows that, for  a given $R>\,0$ and
\[
m(t):=\,(1+R\log^+R)\,\sup_{x\in B(0,R)}\frac{|b(t,x)|}{1+|x|\log^+|x|}\text{ for a.e.~}t\in I\,,
\]
then
\begin{equation}\label{estimbThmB}
m\in L^1(I)\text{ and } |b(t,x)|\le\, m(t)\text{ for a.e.~}t\in I, \text{ for each  } x\in B(0,R)
\end{equation}
and
\begin{equation}\label{eq5eb95263vecchia}
\exp\left( \frac{ \|D_xb\| }{ \log^+\|D_xb\| } \right)\in L^1(I;L^1_{\rm loc}(\R^n))\,.
\end{equation}

Thus, by \eqref{estimbThmB} and \eqref{eq5eb95263vecchia}, the vector field $b$ is well-posed in $\overline{I}\times\R^n$ by Theorem~\ref{thm5e999915} and Proposition~\ref{propFEL}. Moreover the flow $X$ associated to $b$ is globally defined thanks
to the  growth condition \eqref{globexsistb}, so that $X\in C^0(\overline I\times\overline I\times\R^n;\R^n)$ 
(see, for instance, \cite[Theorem~5.1, Chap. III]{MR1929104}).
 In particular, $b$ satisfies the assumptions of  \cite[Main Theorem]{MR3494396}. Therefore
 the push-forward measure $X(t,s,\cdot)_\# \gamma_n$ is absolutely continuous with respect to the Gaussian measure
 $\gamma_n$ with density $w(t,s)$ belonging to $\Lbs^{\Phi_\alpha}(\R^n,\gamma_n)$ for each $0<\alpha<\alpha_0(s,t)$, where 
\[
\alpha_0(t,s) = \exp\left(-16\, e^2 \int_s^t\|\div_{\gamma_n} b(v,\cdot)\|_{{\rm Exp}_{\gamma_n}\left(\frac{L}{\log L}\right)} \dd v \right) ,
\]
where
\[
\div_{\gamma_n} b(v) = \div b(v) - v\cdot b(v) ,
\]
that is, $\div_{\gamma_n}$ is the adjoint of the gradient operator with respect to the measure $\gamma_n$. 
Notice that $\alpha_0$ is a continuous function with $\alpha_0(t,t)=1$ for every $t\in I$.
	
	To complete the proof, it is enough to show that
	\begin{equation}\label{RNderX0Ln}
	\frac{\dd}{\dd \Lbm^n}(X (t,s,\cdot)_\# \Lbm^n)(x) = D(x) w(t,s)(x) \quad\text{for a.e.~} x\in\R^n\,,
	\end{equation}
	where 
	 $D(x)=\exp\left({\frac{|X(s,t,x)|^2-|x|^2}{2}}\right)$
	and ${\frac{d}{d\Lbm^n}}$ denotes the Radon-Nikodym derivative with respect to $\Lbm^n$.
	Indeed, since the map $D:\R^n\to [0,\infty)$ is continuous and since $\Lbs^{\Phi_\alpha}(\R^n,\mu)\subset \Lbs^{\Phi_\alpha}_\loc(\R^n)$, 
	we can conclude~\eqref{eq5eb91e42}.
	
	In order to prove~\eqref{RNderX0Ln}, fix $t,\,s\in I$ and set $\psi:= X (s,t,\cdot)$.
	By the differentiation theorem for Radon measures (see, for instance, \cite{MR1857292}), we have
	\[
	\frac{\dd}{\dd \Lbm^n} \psi_\# \Lbm^n = \lim_{r\to0^+} \frac{\Lbm^n(\psi(B(x,r))) }{ \Lbm^n(B(x,r)) },
	\]
	for almost every $x\in\R^n$.
	Notice that
	\[
	\frac{ \Lbm^n(\psi(B(x,r)))}{ \Lbm^n(B(x,r))}
	= \frac{ \Lbm^n(\psi(B(x,r)))}{\gamma_n(\psi(B(x,r)))}
		\frac{\gamma_n(\psi(B(x,r)))}{\gamma_n(B(x,r))}
		\frac{\gamma_n(B(x,r)))}{ \Lbm^n(B(x,r))} ,
	\]
	where, again by differentiation theorem for Radon measures,
	\begin{align*}
	\lim_{r\to 0^+} \frac{\gamma_n(B(x,r)))}{ \Lbm^n(B(x,r))}
		&= \frac{1}{(2\pi)^{n/2}} \exp\left(-\frac{|x|^2}{2}\right) , \\
	\lim_{r\to 0^+}\frac{\gamma_n(\psi(B(x,r)))}{\gamma_n(B(x,r))}
		&= w(t,s)(x) , \\
	\lim_{r\to 0^+} \frac{ \gamma_n(\psi(B(x,r))) }{ \Lbm^n(\psi(B(x,r))) } 
		&=  \frac{1}{(2\pi)^{n/2}} \exp\left(- \frac{|\psi(x)|^2}{2}\right) .
	\end{align*}
	A short computation leads to~\eqref{RNderX0Ln}.
\end{proof}

When the spatial dimension $n$ equals 1, we have stronger results. 

\begin{proof}[Proof of Theorem~\ref{thm5ea9862b}]
	By Theorem~\ref{thm5eb900db}, $b$ is well-posed and both maps $X_{(t,s)}:\Omega_{(t,s)}\to\Omega_{(s,t)}$ and $X_{(t,s)}^{-1} = X_{(s,t)}:\Omega_{(s,t)}\to\Omega_{(t,s)}$ satisfy the Lusin (N) condition.
	Thus, by a well-known result of real analysis (see, for instance, \cite[Theorem 7.45]{gariepy1995modern}) they must be locally absolutely continuous.
\end{proof}

\begin{remark}\label{rem5ede0b6b}
We do not know whether, if $n\ge2$, there exists a flow $X$ associated to a vector field $b$ satisfying~\eqref{eq5eb95263}, but $X(t,s,\cdot)\notin W^{1,1}_\loc(\R^n ;\R^n)$ for some $t,s\in\R$.
Recall it has been proven in \cite{MR3634023} that there exists an almost everywhere approximately differentiable, orientation and measure preserving  homeomorphism $\Psi:\R^n\to\R^n$, whose Jacobian is equal to $-1$ almost everywhere on the unit $n$-dimensional cube $Q$, $\Psi(x)=x$ if $x\in \R^n\setminus Q$  and  $\Psi\notin W^{1,1}(Q;\R^n)$. 
In particular it holds that
\[
\Psi_\#\Lbm^n\ll\Lbm^n\text{ and }\frac{\dd}{\dd\Lbm^n}(\Psi_\#\Lbm^n)(x)=1
\text{ for a.e.~} x\in\R^n .
\]
We are not aware whether such a homeomorphism $\Psi$ could be induced by the  flow of a suitable vector field.
\end{remark}

\begin{remark} 
Sobolev regularity stated in Theorem~\ref{thm5ea9862b} 
is sharp, as we show in Example~\ref{sec5eb962e5} below. 
Moreover it can be compared with
 the result in \cite{MR4061966}, which proves existence and uniqueness of a unique flow $X$, with $D_xX(t,s,\cdot)\in A_\infty(\R)$, provided  that $D_xb\in L^1((0,T);BMO(\R))$ (see \cite[Chap. 7]{MR3243741} for the definition of  the class of functions $BMO(\R^n)$).
Notice that a function in $BMO$ admits exponential summability, 
as defined in Section~\ref{sec5eb94e93}.
In particular,~\eqref{eq5eb95263} holds, too. 
Whereas there are vector fields $b$ with $D_xb$ exponentially summable, but $D_xb\notin\Lbs^1((0,T);BMO(\R))$ (see Example~\ref{sec5ec2a1ee} below).
Notice also that there are flows associated to well-posed vector fields, which are not absolutely continuous (see Example~\ref{sec5ec2a20e} below).
\end{remark}

\section{Regularity of the flow with exponential summability}\label{sec5edff316}

This section is devoted to the proof of Theorem~\ref{thm5e9a10ec}. Under the stronger regularity assumption
\begin{equation}\label{eq:extrareg}
b\in C^0(I;C^1(\Omega;\R^n))
\end{equation}
we are able to derive a more precise a priori estimate, involving the quantity $\ell(s,x)$, namely the length of the 
maximal interval $I_{(s,x)}$.
Notice that we clearly have $\ell(s,x)\leq\ell$,
 with $\ell$ the length of $I$.
Moreover, when $b$ is a well posed vector field, the maximal integral curves of $b$ stop either at the boundary of $I$ or of $\Omega$ and thus, for all $s\in I$,
\begin{equation}\label{eq6283574e}
\ell(s,x)\geq \min\{\ell,{\rm dist}(x,\partial\Omega)/\sup |b|\} .
\end{equation}
From this follows that,
$\Lambda_p'\le\Lambda_p$, where $\Lambda_p'$ is defined in~\eqref{eq63eb586e} below and $\Lambda_p$ in~\eqref{eq:geometric}.
In particular, 
 the finiteness condition~\eqref{eq:defLambda_p_bis} implies the finiteness of $\Lambda'_p$ as stated in~\eqref{eq63eb586e}.

\begin{theorem}\label{thm5e9a10ec_bis}
	Let $I\subset\R$ and $\Omega\subset\R^n$ be bounded open sets and let $b\in C^0(I;C^1(\Omega;\R^n))$ be a bounded vector field. Assume
	 that for some $p> 2n$ one has
	\begin{equation}\label{eq:defLambda_p_bis}
	\int_I\int_\Omega
				{\rm dist}(x,\de\Omega)^{\frac{n}{n-p}}\exp\left( \frac{\ell p^2}{p-n}  \|D_xb(s,x)\| \right) 
				\dd x\dd s 
	<+\infty.
	\end{equation}
	Then $b$ is a well-posed vector field in $\overline{I}\times\Omega$ thanks to Theorem~\ref{thm5e999915}.
	
	For all $t\in \overline{I}$ 
	\begin{equation}\label{eq5ff85eabbis}
	\text{for almost every $s\in{I}$,\ }
		X(t,s,\cdot)\in W^{1,p}(\Omega_{(t,s)};\R^n)\quad\text{and}\quad
		X(s,t,\cdot)\in W^{1,p}(\Omega_{(s,t)};\R^n) .
	\end{equation}
	
	
	Moreover, we have that, for all $t\in \overline{I}$,
	\begin{equation}\label{eq63ea3e58}
		\int_I\int_{\Omega_{(t,s)}} \|D_xX(t,s,x)\|^p \dd x\dd s 
		\leq 
			\Lambda'_p ,
	\end{equation}
	where
	\begin{equation}\label{eq63eb586e}
	\Lambda_p':=
	\int_I\int_\Omega \left(\frac{\ell}{\ell(s,x)}\right)^{\frac{n}{p-n}} \exp\biggl(\frac{\ell p^2}{p-n}\|D_xb(s,x)\|\biggr)\dd x \dd s 
	< \infty.
	\end{equation}
	
	If we also assume that there exists $\hat\ell>0$ such that 
	\begin{equation}\label{eq63eb4e54}
		\ell(t,x)\ge\hat\ell 
		\text{ for all $t\in I$ and $x\in\Omega$},
	\end{equation}
	then, for every $s,t\in I$, 
	\begin{equation}\label{eq63ea3f57}
		\int_{\Omega_{(t,s)}} \|D_xX(t,s,x)\|^p \dd x
		\leq
		\frac1{\hat\ell} \left(\frac{\ell}{\hat\ell}\right)^{\frac{n^2}{p(p-n)}} \Lambda''_p
	\end{equation}
	where
	\[
		\Lambda''_p 
		:= \int_{I}\int_\Omega 
			\exp\left( \frac{\ell p^2}{p-n}\|D_xb(v,y)\| \right) 
		\dd y\dd v .
	\]
	Moreover, if there exists an open set $\Omega'\Subset\Omega$ such that
	\begin{equation}\label{cptsuppb}
			\spt(b(t,\cdot))\subset \Omega'\text{ for each }t\in I\,,
	\end{equation}
	then~\eqref{eq:defLambda_p_bis} implies~\eqref{eq63ea3f57} also for $p>n$.	
\end{theorem}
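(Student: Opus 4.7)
The hypothesis~\eqref{eq:defLambda_p_bis} provides exponential integrability of $\|D_xb\|$ strong enough to apply Theorem~\ref{thm5e999915} with $\Theta(s)=e^{cs}$ for $c=\ell p^2/(p-n)$ (the structural conditions~\ref{convTheta}--\ref{item04021113} are routine for such $\Theta$), so $b$ is well-posed in $\overline I\times\Omega$. Since in addition $b\in C^0(I;C^1(\Omega;\R^n))$, Lemma~\ref{lem04030932} ensures that $X$ is of class $C^1$ and that for every $(s,x)$ the matrix-valued curve $y(\tau):=D_xX(\tau,s,x)$ satisfies the variational equation $\dot y=D_xb(\tau,X(\tau,s,x))\,y$ with $y(s)=\mathrm{Id}$. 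Gr\"onwall's inequality then yields the pointwise bound
\[
\|D_xX(t,s,x)\|\le \exp\Big(\int_s^t\|D_xb(\tau,X(\tau,s,x))\|\dd\tau\Big).
\]

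The heart of the proof is to convert this into the integral $L^p$-bound~\eqref{eq63ea3e58}. I would raise to the $p$-th power, apply Jensen's inequality with the probability measure $\dd\tau/|t-s|$ on $[s,t]$ to bring the exponential inside the time integral, use Fubini to swap the spatial and temporal integrations, and then change variables $y=X(\tau,s,x)$ in the spatial integral at each fixed intermediate time $\tau$. The Jacobian $|J_{X(s,\tau,\cdot)}(y)|$ that this introduces is separated from the exponential factor via H\"older's inequality with conjugate exponents $\frac{p}{p-n}$ and $\frac{p}{n}$: the constant in the exponent becomes $\frac{p^2|t-s|}{p-n}\le \frac{\ell p^2}{p-n}$, matching the one of~\eqref{eq:defLambda_p_bis}, while Hadamard's inequality $|J|\le \|D_xX\|^n$ turns the Jacobian side into an integral of $\|D_xX(s,\tau,y)\|^p$, a quantity of the same shape as the left-hand side.

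The main obstacle is then to close the resulting self-referencing integral inequality for $F(t,s):=\int_{\Omega_{(t,s)}}\|D_xX(t,s,x)\|^p\dd x$. My plan is to integrate in $s\in I$, swap the order of integration via Fubini, and use the geometric bound~\eqref{eq6283574e}, namely $\ell(s,x)\ge\min\{\ell,\operatorname{dist}(x,\partial\Omega)/\sup|b|\}$, to absorb the $|t-s|^{-1}$ factor into the weight $(\ell/\ell(s,x))^{n/(p-n)}$ of~\eqref{eq63eb586e}. A splitting of the H\"older product via Young's inequality then separates a term contributing directly to $\Lambda'_p$ from a residual self-referencing piece; the constraint $p>2n$ makes the residual exponent $n/p<1/2$ small enough for this piece to be absorbed back into the left-hand side.

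Once~\eqref{eq63ea3e58} has been established, a Fubini argument combined with the identity $X(s,t,\cdot)=X(t,s,\cdot)^{-1}$ yields the almost-everywhere Sobolev statement~\eqref{eq5ff85eabbis}. The pointwise bound~\eqref{eq63ea3f57} under the uniform assumption~\eqref{eq63eb4e54} follows from the same chain of inequalities without the $s$-integration, using $\hat\ell^{-1}$ in place of $|t-s|^{-1}$ and $\ell(s,x)^{-1}$ throughout. Finally, under the compact-support assumption~\eqref{cptsuppb} the trajectories stay uniformly away from $\partial\Omega$, so the distance-to-boundary weight drops out of the estimate and the stringent $p>2n$ constraint is no longer required, leaving only the Hadamard-type threshold $p>n$.
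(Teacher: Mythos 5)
Your opening steps (well-posedness via Theorem~\ref{thm5e999915}, the variational equation, the Gr\"onwall bound $\|D_xX(t,s,x)\|\le\exp\left|\int_s^t\|D_xb(v,X(v,s,x))\|\dd v\right|$, then Jensen, the change of variables $y=X(v,s,x)$, Hadamard and H\"older with exponents $p/n$ and $p/(p-n)$) coincide with the paper's. The gap is in how you close the argument. First, the normalization in Jensen matters: the paper averages over the whole maximal interval, i.e.\ with weight $1/\ell_A(s,x)$, precisely because after the change of variables this weight becomes $1/\ell_A(v,y)$, independent of $s$, and the subsequent integration in $s$ of the indicator $\chi_{A_{(s,v)}}(y)$ produces exactly $\ell_A(v,y)$, whence the weight $\ell_A(v,y)^{1-q'}$ that builds $\Lambda_p'$ in~\eqref{eq63eb586e}. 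With your normalization $\dd\tau/|t-s|$ this bookkeeping breaks down: for $x\in\Omega_{(t,s)}$ one has $|t-s|\le\ell(s,x)$, so $|t-s|^{-1}\ge\ell(s,x)^{-1}$, and~\eqref{eq6283574e} (a lower bound on $\ell(s,x)$ by the distance to $\partial\Omega$, with no relation to $|t-s|$) cannot ``absorb'' the factor $|t-s|^{-1}$ into $(\ell/\ell(s,x))^{n/(p-n)}$; the inequality you would need goes the wrong way, and what the $s$-integration actually produces in your scheme (a weight like $|t-v|^{1-q'}$) is not controlled by the hypothesis~\eqref{eq:defLambda_p_bis}.

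Second, and more seriously, closing the self-referencing inequality by weighted Young plus absorption is circular as stated: absorbing a multiple of $\int\!\int\|D_xX\|^p$ into the left-hand side presupposes that this quantity is finite on all of $\Omega$, which is exactly what is being proved. The paper avoids this by an a priori step: it works first on $A\Subset\Omega$, where $b\in C^0(I;C^1)$ makes the triple integral of $\|D_xX\|^p$ finite, closes the inequality there by direct division (the second H\"older factor is exactly that triple integral raised to the power $n/p$, so no absorption and no loss in the constant), and only then exhausts $\Omega$ by $A^\epsilon=\{\mathrm{dist}(\cdot,\partial\Omega)>\epsilon\}$. This limit is the genuinely delicate part: since the weight $\ell_{A^\epsilon}^{n/(n-p)}$ can blow up near $\partial A^\epsilon$, the paper averages over $\epsilon\in(0,\eta)$ and uses $\ell_{A^\epsilon}(v,y)\ge\mathrm{dist}(y,\partial A^\epsilon)/\sup|b|$ together with $\int_0^\eta(\delta(y)-\epsilon)^{\alpha}\dd\epsilon\le C\,\eta\,\delta(y)^{\alpha}$, which needs $\alpha=n/(n-p)>-1$, i.e.\ $p>2n$, and the distance weight in~\eqref{eq:defLambda_p_bis}. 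That is where $p>2n$ and the boundary weight actually enter --- not through ``$n/p<1/2$'' --- and your proposal omits this exhaustion step entirely. Likewise, in the compactly supported case the constraint relaxes to $p>n$ because the triple integral is finite for $A=\Omega$ directly (no $A^\epsilon$ approximation is needed), not because trajectories stay uniformly away from $\partial\Omega$ (near $\partial\Omega$ they are simply constant). Finally, the second half of~\eqref{eq5ff85eabbis} does not follow merely from $X(s,t,\cdot)=X(t,s,\cdot)^{-1}$, since the inverse of a $W^{1,p}$ homeomorphism need not be $W^{1,p}$; it requires running the same integral estimate with the roles of the two time variables exchanged.
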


\begin{remark}\label{rem63eb57a1}
	Typical cases where where~\eqref{eq63eb4e54} holds are the following.
	First, as we already observed in the statement above, 
	if there exists an open set $\Omega'\Subset\Omega$ such that~\eqref{cptsuppb} holds
	then it turns out that 
	$\Omega_{(t,s)}=\,\Omega$ and $\ell(t,x)=\ell$
	for every $t,\,s\in \overline{I}$ and $x\in\Omega$.
		
	Second, if $\de\Omega$ is smooth and compact and $b$ is tangent to $\de\Omega$, 
	then we have again $\Omega_{(t,s)}=\,\Omega$ and $\ell(t,x)=\ell$
	for every $t,\,s\in \overline{I}$ and $x\in\Omega$.
	
	Third, one can easily have $\hat\ell<\ell$:
	for example, if on the plane $\R^2$ with coordinates $(x,y)$, we take $b(t,(x,y)) = \de_x$ and $\Omega = \{(x,y):-2+y < x < 2-y,\ 0<y<1\}$.
	In this case we have $\ell(t,(x,y))=4-2y$.
\end{remark}

The proof of Theorem~\ref{thm5e9a10ec_bis} is based on the well-known estimate~\eqref{eq09071038} below and a bootstrap argument in three steps.
The proof gives also an intermediate estimate~\eqref{eq63c7c112} on subdomains $A\Subset\Omega$ that involves the function $\ell(s,x)$ in place of ${\rm dist}(x,\de\Omega)$.

\begin{remark}\label{rem:torus} The proof of Sobolev regularity of the map $X(t,s,\cdot)$ becomes even simpler when the
spatial domain $\Omega$ is replaced by a compact Riemannian manifold $M$ without boundary, as for instance the 
$n$-dimensional torus $\bb T^n$ considered in \cite{MR4263701} (see also 
\cite{MR3912727,MR3494396,MR3458193,MR3906270,MR3778580}).
Indeed, in this case the quantity $\ell(s,x)$ equals
the length of $I$. The extension to the case of Sobolev regularity with respect to the space variable,
along the lines of Theorem~\ref{thm5e9a10ec} (i.e., dropping assumption \eqref{eq:extrareg}), simply requires a global approximation
of $b$ by more regular vector fields. In the case $M=\bb T^n$, one may argue by convolution with respect to the
space variable, viewing the vector field as a spatially periodic one defined in $I\times\R^n$.
\end{remark}

The first lemma is one of the many variants of  Gronwall's lemma, whose simple proof is omitted.

\begin{lemma}\label{lem04030936}
	Let $f:[s,t]\to [0,+\infty)$ be an absolutely continuous function and $\beta\in L^1(s,t)$ non-negative.
	If $f'\le \beta f$ a.e.~in $(s,t)$ and $f(s)=1$, then $f(t)\le \exp(\int_s^t\beta(v)\dd v)$.
\end{lemma}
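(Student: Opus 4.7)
The plan is to apply the classical integrating-factor trick, reducing the differential inequality $f'\le \beta f$ to a monotonicity statement. I would introduce the auxiliary function $g:[s,t]\to[0,+\infty)$ defined by
\[
g(v):=f(v)\exp\left(-\int_s^v \beta(u)\dd u\right).
\]
Since $\beta\in L^1(s,t)$, its indefinite integral $V(v):=\int_s^v \beta(u)\dd u$ is absolutely continuous on $[s,t]$, and composing with the locally Lipschitz exponential (on a bounded range) preserves absolute continuity. Hence $g$ is absolutely continuous as a product of two bounded absolutely continuous functions.

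The key step is a pointwise computation at every $v\in(s,t)$ where both $f$ and $V$ are differentiable, which holds on a full-measure set. At such $v$ the product rule yields
\[
g'(v) = \bigl(f'(v)-\beta(v)f(v)\bigr)\exp\left(-\int_s^v \beta(u)\dd u\right)\le 0,
\]
the inequality being immediate from the hypothesis $f'\le\beta f$ (note $f\ge 0$ and $\beta\ge 0$) together with the positivity of the exponential factor. Since an absolutely continuous function with a.e.\ non-positive derivative is non-increasing, we conclude $g(t)\le g(s)=f(s)=1$, which rearranges to the desired estimate $f(t)\le \exp\bigl(\int_s^t \beta(u)\dd u\bigr)$.

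The only mildly delicate point is justifying the product rule for absolutely continuous functions, which is a standard consequence of the fundamental theorem of calculus for absolutely continuous functions combined with a.e.\ differentiability; alternatively, one could approximate $f$ and $V$ by smooth functions in $W^{1,1}(s,t)$ and pass to the limit in the corresponding smooth inequality. There is no substantial obstacle, in agreement with the author's observation that the proof is routine.
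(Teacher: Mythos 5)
Your proof is correct: the integrating-factor argument with $g(v)=f(v)\exp(-\int_s^v\beta)$, the absolute continuity of $g$, and the a.e.\ inequality $g'\le 0$ leading to $g(t)\le g(s)=1$ is exactly the standard Gronwall-type argument. The paper omits the proof altogether, calling it simple, and your write-up is precisely the routine argument the authors have in mind, with the technical points (absolute continuity of the product, a.e.\ product rule, and recovering monotonicity from $g'\le 0$ via the fundamental theorem of calculus for absolutely continuous functions) handled correctly.
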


\begin{lemma}\label{lem04031001}
	Let $b\in C^0(I;C^1(\Omega;\R^n))$ and let $X:\Dom_b\to\Omega$ the corresponding flow.
	Then, for every $(t,s,x)\in\Dom_b$ one has
	\begin{equation}\label{eq09071038}
	\|D_xX(t,s,x)\|
	\le  \exp\left| \int_s^t \|D_xb(v,X(v,s,x))\| \dd v \right| .
	\end{equation}
\end{lemma}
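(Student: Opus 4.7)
The plan is to reduce the matrix estimate to a scalar Gronwall inequality via Lemma~\ref{lem04030932}. Since $b\in C^0(I;C^1(\Omega;\R^n))$, Lemma~\ref{lem04030932} guarantees that, for fixed $(s,x)$, the matrix-valued function $y(t):=D_xX(t,s,x)$ is the unique $C^1$ solution on $I_{(s,x)}$ of the linear Cauchy problem
\begin{equation*}
\dot y(t)=B(t)\,y(t),\qquad y(s)=\mathrm{Id},\qquad B(t):=(D_xb)(t,X(t,s,x)).
\end{equation*}
Because $v\mapsto X(v,s,x)$ is continuous and $D_xb$ is continuous in both variables, the map $t\mapsto\|B(t)\|$ is continuous (hence locally integrable) on $I_{(s,x)}$.

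The next step is to pass from the matrix ODE to a differential inequality for the scalar quantity $\varphi(t):=\|y(t)\|$. Using the operator norm and expanding $y(t+h)=y(t)+hB(t)y(t)+o(h)$, the triangle inequality yields
\begin{equation*}
\varphi(t+h)-\varphi(t)\le h\,\|B(t)\|\,\varphi(t)+o(h),
\end{equation*}
so $\varphi$ is locally Lipschitz (thus absolutely continuous) and $\varphi'(t)\le\|B(t)\|\,\varphi(t)$ for a.e.\ $t$. Since $\varphi(s)=\|\mathrm{Id}\|=1$, Lemma~\ref{lem04030936} applied on $[s,t]$ (for $t>s$) gives
\begin{equation*}
\|D_xX(t,s,x)\|=\varphi(t)\le\exp\!\Bigl(\int_s^t\|B(v)\|\,\dd v\Bigr),
\end{equation*}
which is precisely \eqref{eq09071038} in the case $t\ge s$.

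For $t<s$, the simplest route is to apply the same argument to the reversed-time flow, or equivalently to observe that $\tilde\varphi(\tau):=\varphi(s-\tau)$ satisfies $\tilde\varphi'(\tau)\le\|B(s-\tau)\|\tilde\varphi(\tau)$ with $\tilde\varphi(0)=1$, yielding
\begin{equation*}
\varphi(t)\le\exp\!\Bigl(\int_t^s\|B(v)\|\,\dd v\Bigr).
\end{equation*}
Combining both cases gives the absolute value in \eqref{eq09071038}. The only mildly delicate point is the justification of $\varphi'\le\|B\|\varphi$ a.e.; this is routine once one notes that the operator norm is sub-multiplicative and that $t\mapsto y(t)$ is $C^1$, so no real obstacle arises — the lemma is essentially a direct corollary of Lemmas~\ref{lem04030932} and~\ref{lem04030936}.
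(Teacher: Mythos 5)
Your argument is correct and follows essentially the same route as the paper: both reduce to the linear Cauchy problem $\dot y=B\,y$, $y(s)=\mathrm{Id}$, via Lemma~\ref{lem04030932} and then apply the Gronwall-type Lemma~\ref{lem04030936} to the (absolutely continuous) norm of $y$, handling $t<s$ by time reversal. The only cosmetic difference is how the scalar inequality $\varphi'\le\|B\|\varphi$ is justified (your first-order expansion with sub-multiplicativity of the operator norm versus the paper's inner-product differentiation of the norm), which does not change the substance.
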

\begin{proof}
	Set $y(v)=D_xX(v,s,x)$, $B(v)=(D_xb)(v,X(v,s,x))$ and assume, to fix the ideas, $s\le t$ (the proof in the other case is 
	similar). Since, thanks to Lemma~\ref{lem04030932}, $y$ is of class $C^1$, the function 
	$[s,t]\ni  v\mapsto|y(v)|$ is  absolutely continuous.
	Using once more Lemma~\ref{lem04030932}, we have
	\[
	\frac{\dd}{\dd v} |y(v)| 
	= \langle \frac{y(v)}{|y(v)|},\dot y(v) \rangle
	= \langle \frac{y(v)}{|y(v)|},B(v) y(v) \rangle
	\le \|B(v)\| |y(v)|\text{ for a.e.~}v\in (s,t)\, .
	\]
	Lemma~\ref{lem04030936} implies~\eqref{eq09071038}, where
	the absolute value in the argument of the exponential is necessary when $t<s$.
\end{proof}

\begin{proof}[Proof of Theorem~\ref{thm5e9a10ec_bis}]  
Fix $t\in I$ and $A\subset\Omega$ open;
 first, we use
	Lemma~\ref{lem04031001} 
	and Jensen's inequality~\eqref{eq5ea98b05} to get
\begin{equation}\label{eq63c6ed39}
\begin{aligned}
	\int_I\int_{A_{(t,s)}} \|D_xX(t,s,x)\|^p \dd x\dd s
	&\le \int_I\int_{A_{(t,s)}}\exp\left( p \left|\int_s^t \|D_xb(v,X(v,s,x))\| \dd v\right| \right)
	\dd x \dd s\\
	&\le \int_I\int_{A} \int_{I} \frac{\chi_{A(v,s)}(x)}{\ell_A(s,x)} \exp\left( \ell p\|D_xb(v,X(v,s,x))\|\right)
	\dd v\dd x\dd s, 
\end{aligned}
\end{equation}
where we write $\ell_A(s,x)$ for the length of the  interval of the maximal integral curve of $b$ in $A$ that starts from $x$ at time $s$.
Setting $J(s,v,\cdot)=J_X(s,v,\cdot)=\det D_x X(s,v,\cdot)$,
we apply 
the change of variable $y=X(v,s,x)$, i.e., $X(s,v,y)=x$
and $\chi_{A(v,s)}(x)\dd x = J(s,v,y) \chi_{A(s,v)}(y) \dd y$;
notice in particular that $x\in A(v,s)$ if and only if $y=X(v,s,x)\in A(s,v)$.
Together with Hadamard's inequality~\eqref{eq03161234} 
and the Hölder inequality with exponents $q=p/n$ and $q'=p/(p-n)$,
we get
\begin{equation}\label{eq63eb3f4f}
\begin{aligned}
	&\int_I\int_{I}\int_{A} \frac{\chi_{A(v,s)}(x)}{\ell_A(s,x)} \exp\left( \ell p\|D_xb(v,X(v,s,x))\|\right)
	\dd x\dd v\dd s \\
	&\leq \int_I\int_{I}\int_{A} 
	\frac{ \chi_{A(s,v)}(y) }{ \ell_A(s,X(s,v,y)) } \exp\left( \ell p\|D_xb(v,y)\| \right) \|D_y X(s,v,y)\|^{n}
	\dd y\dd v\dd s \\
	&\leq
	\left( 
		\int_I\int_{I}\int_{A} 
		\frac{ \chi_{A(s,v)}(y) }{ \ell_A(s,X(s,v,y))^{q'} } \exp\left( \ell pq'\|D_xb(v,y)\| \right) 
	\dd y\dd v\dd s
	\right)^{1/q'}
	\times \\ &\phantom{AAAAAAAAAAAAAAAAAAAAA}\times
	\left(
	\int_I\int_{I}\int_{A} 
	 \|D_y X(s,v,y)\|^{nq}
	\dd y\dd v\dd s
	\right)^{1/q} .
\end{aligned}
\end{equation}
The identity $\ell_A(s,X(s,v,y)) = \ell_A(v,y)$ gives us
\[
	\int_I \frac{ \chi_{A(s,v)}(y) }{ \ell_A(s,X(s,v,y))^{q'} } \dd s
	= \ell_{A(v,y)}(y)^{1-q'} ,
\]
and we conclude that
\begin{equation}\label{eq63c6ec74}
\begin{aligned}
	&\int_I\int_{A_{(t,s)}} \|D_xX(t,s,x)\|^p \dd x\dd s \\
	&\leq
	\left( 
		\int_{I}\int_{A} 
		\ell_A^{1-q'}(v,y) \exp\left( \ell pq'\|D_xb(v,y)\| \right) 
	\dd y\dd v
	\right)^{1/q'}
	\times \\ &\phantom{AAAAAAAAAAAAAAAAAAAAA}\times
	\left(
	\int_I\int_{I}\int_{A} 
	 \|D_y X(s,v,y)\|^{nq}
	\dd y\dd v\dd s
	\right)^{1/q} .
\end{aligned}
\end{equation}

If we take $A\Subset\Omega$, the latter triple integral is finite.
By integrating this inequality with respect to $t$,
and then rearranging the terms,
 we obtain that
\begin{equation}\label{eq63c7ad49}
\int_I\int_I\int_{A_{(t,s)}}\|D_xX(t,s,x)\|^p\dd x\dd s \dd t
\le \ell^{q'} \int_I\int_A
			\ell_A^{1-q'}(v,y)\exp\left( \ell p q' \|D_xb(v,y)\| \right) 
			\dd y\dd v
\end{equation}
If we plug the estimate~\eqref{eq63c7ad49} into~\eqref{eq63c6ec74},
we obtain
\begin{equation}\label{eq63c7c112}
	\int_I\int_{A_{(t,s)}} \|D_xX(t,s,x)\|^p \dd x\dd s
	\leq \ell^{\frac{n}{p-n}} 
	\int_I\int_A
			\ell_A^{\frac{n}{n-p}}(v,y)\exp\left( \frac{\ell p^2}{p-n} \|D_xb(v,y)\| \right) 
			\dd y\dd v .
\end{equation}

To prove~\eqref{eq63ea3e58},
we need to take a limit in~\eqref{eq63c7c112} along a sequence of sets $A\Subset\Omega$ that fills $\Omega$.
To this aim, we define
\[
A^\epsilon := \{ x\in\Omega: {\rm dist}(x,\de\Omega) > \epsilon \}
\]
for $\epsilon>0$, plug $A^\epsilon$ in~\eqref{eq63c7c112} and take $\epsilon\to0$.

For the left-hand side of~\eqref{eq63c7c112}, we observe that
 $A^{\epsilon_1}_{(t,s)} \subset A^{\epsilon_2}_{(t,s)}$ whenever $\epsilon_1>\epsilon_2$. 
 Thus, we have 
\[
\lim_{\epsilon\to0}\int_I\int_{A^\epsilon_{(t,s)}}\|D_xX(t,s,x)\|^p\dd x\dd s = \int_I\int_{\Omega_{(t,s)}}\|D_xX(t,s,x)\|^p\dd x\dd s .
\]

The right-hand side of~\eqref{eq63c7c112} is more tricky.
Define
$f(v,y) := \exp\left( \ell p q' \|D_xb(v,y)\| \right)$,
 $\delta(x) := {\rm dist}(x,\de\Omega)$,
 and $\alpha = \frac{n}{n-p}$.
Notice that, since $p\geq 2n$, we have $\alpha\in(-1,0)$.

We claim that
\begin{equation}\label{eq63c7c610}
	\liminf_{\epsilon\to0} \int_{A^\epsilon} \int_I \ell_{A^\epsilon}^{\alpha}(v,y) f(v,y) \dd v\dd y
	\leq \int_{\Omega}\int_I \ell_\Omega^{\alpha}(v,y) f(v,y) \dd v\dd y .
\end{equation}
If $K\Subset\Omega$, the monotone convergence theorem implies that 
\[
\lim_{\epsilon\to0} \int_{K}\int_I \ell_{A^\epsilon}^{\alpha}(v,y) f(v,y) \dd v\dd y
	= \int_{K}\int_I \ell_\Omega^{\alpha}(v,y) f(v,y) \dd v\dd y .
\]
Thus,~\eqref{eq63c7c610} is shown if we can prove that
\begin{equation}\label{eq63c7c693}
	\inf_{K\Subset\Omega} 
	\liminf_{\epsilon\to0}
	\int_{A^\epsilon\setminus K} \int_I \ell_{A^\epsilon}^{\alpha}(v,y) f(v,y) \dd v\dd y
	= 0 .
\end{equation}
Indeed, thanks to~\eqref{eq63eb586e}, for every $K\Subset\Omega$ we have
\begin{multline*}
	\left| \liminf_{\epsilon\to0} \int_{A^\epsilon} \int_I \ell_{A^\epsilon}^{\alpha}(v,y) f(v,y) \dd v\dd y
	- \int_{\Omega}\int_I \ell_\Omega^{\alpha}(v,y) f(v,y) \dd v\dd y \right| \\
	\le
	\liminf_{\epsilon\to0}
	\int_{A^\epsilon\setminus K} \int_I \ell_{A^\epsilon}^{\alpha}(v,y) f(v,y) \dd v\dd y
	+ \int_{\Omega\setminus K}\int_I \ell_\Omega^{\alpha}(v,y) f(v,y) \dd v\dd y .
\end{multline*}
If there is a sequence $K_j\Subset\Omega$ such that 
\[
\lim_{j\to\infty}
\liminf_{\epsilon\to0}
	\int_{A^\epsilon\setminus K_j} \int_I \ell_{A^\epsilon}^{\alpha}(v,y) f(v,y) \dd v\dd y
= 0 ,
\]
then $\lim_{j\to\infty} |\Omega\setminus K_j| = 0$, and 
$\lim_{j\to\infty} \int_{\Omega\setminus K_j}\int_I \ell_\Omega^{\alpha}(v,y) f(v,y) \dd v\dd y  = 0$,
and so~\eqref{eq63c7c610} holds.

To prove~\eqref{eq63c7c693} we use the inequality~\eqref{eq6283574e}.
If $K$ is large enough,
so that ${\rm dist}(y,\de A^{\epsilon}) < \ell$ for all $y\in A_\epsilon\setminus K$,
the inequality~\eqref{eq6283574e} simplifies to
\[
\ell_{A^\epsilon}(v,y) \|b\|_{L^\infty} \geq {\rm dist}(y,\de A^{\epsilon}).
\]
We use the latter inequality
to compute the following averaged integral
\begin{align*}
\frac1\eta \int_0^\eta 
	&\int_{A^\epsilon\setminus K} \int_I \ell_{A^\epsilon}^{\alpha}(v,y) f(v,y) \dd v \dd y
	\dd\epsilon \\
&\leq \frac1\eta \int_0^\eta 
	\int_{A^\epsilon\setminus K} \int_I \frac{ {\rm dist}(y,\de A^\epsilon)^{\alpha} }{ \|b\|_{L^\infty}^{\alpha} } f(v,y) \dd v \dd y
	\dd\epsilon \\
&= \frac1{\|b\|_{L^\infty}^{\alpha}} \int_{\Omega\setminus K} \int_I 
	\left( \frac1\eta \int_0^\eta \chi_{A^\epsilon}(y){\rm dist}(y,\de A^\epsilon)^{\alpha} \dd\epsilon \right)
	f(v,y) \dd v \dd y .
\end{align*}
Since $\delta(y) \leq {\rm dist}(y,\de A^\epsilon) + \epsilon$, i.e.,  ${\rm dist}(y,\de A^\epsilon)\geq \delta(y)-\epsilon$,
we compute 
\begin{align*}
	\int_0^\eta \chi_{A^\epsilon}(y){\rm dist}(y,\de A^\epsilon)^{\alpha} \dd\epsilon
	&\leq \int_0^\eta \chi_{A^\epsilon}(y) (\delta(y)-\epsilon)^{\alpha} \dd\epsilon \\
	&= \int_0^{\min\{\eta,\delta(y)\}} (\delta(y)-\epsilon)^{\alpha} \dd\epsilon \\
	&\overset{(*)}\leq  \min\left\{2^{-\alpha} , \,\frac{2}{\alpha+1}\right\} \eta \delta(y)^{\alpha} .
\end{align*}
In $(*)$, we considered two cases: first, when $\eta<\delta(y)/2$, 
 the integral is bounded by $2^{-\alpha} \delta(y)^{\alpha}\eta$;
second, when $\eta\geq \delta(y)/2$, 
using the fact that $\alpha>-1$,
the integral is bounded by $\frac{2}{\alpha+1}\eta \delta(y)^{\alpha} $.

Therefore, there exists $\eta_0>0$ such that for all $\eta\in(0,\eta_0)$, we can estimate the averaged integral with
\[
\frac1\eta \int_0^\eta 
	\int_{A^\epsilon\setminus K}\int_I {\rm dist}(y,\de A^\epsilon)^{\alpha} f(v,y) \dd v\dd y
	\dd\epsilon
\leq
2^{-\alpha}
\int_{\Omega\setminus K} \int_I \delta(y)^{\alpha} f(v,y) \dd v \dd y .
\]
It follows that
\[
\liminf_{\epsilon\to0} \int_{A^\epsilon\setminus K}\int_I {\rm dist}(y,\de A^\epsilon)^{\alpha} f(v,y) \dd v\dd y 
\leq
2^{-\alpha}
\int_{\Omega\setminus K} \int_I \delta(y)^{\alpha} f(v,y) \dd v \dd y .
\]
Then, 
since we assumed $\int_{\Omega}\int_I \delta(y)^{\alpha} f(v,y) \dd v \dd y <\infty$ in~\eqref{eq:defLambda_p_bis}, 
we obtain the estimate~\eqref{eq63c7c693} and thus our claim~\eqref{eq63c7c610}.

We have thus completed the proof of~\eqref{eq63ea3e58}, and thus~\eqref{eq5ff85eabbis}.
%
Next we work on the second part of the theorem.
If~\eqref{eq63eb4e54} holds, i.e., $\hat\ell \le \ell(t,x)\le \ell$ for all $t\in I$ and $x\in\Omega$, then
\begin{equation}\label{eq63eb3f12}
	\Lambda''_p
	\leq \Lambda'_p
	\leq \left(\frac{\ell}{\hat\ell}\right)^{\frac{n}{p-n}} \Lambda''_p ,
\end{equation}
since $p\geq2n$.
We can repeat the steps in~\eqref{eq63c6ed39} and~\eqref{eq63eb3f4f}
without the integral in $s$, and then apply~\eqref{eq63ea3e58} and~\eqref{eq63eb3f12}
 to obtain
\begin{align*}
	&\int_{\Omega_{(t,s)}} \|D_xX(t,s,x)\|^p \dd x \\
	&\leq
	\left( 
		\int_{I}\int_{\Omega} 
		\frac{ \chi_{\Omega(s,v)}(y) }{ \ell_\Omega(s,X(s,v,y))^{q'} } \exp\left( \ell pq'\|D_xb(v,y)\| \right) 
	\dd y\dd v
	\right)^{1/q'}
	\times \\ &\phantom{AAAAAAAAAAAAAAAAAAAAA}\times
	\left(
	\int_{I}\int_\Omega
	 \|D_y X(s,v,y)\|^{nq}
	\dd y\dd v
	\right)^{1/q} 
	\leq
	\frac1{\hat\ell} \left(\frac{\ell}{\hat\ell}\right)^{\frac{n^2}{p(p-n)}} 
	 \Lambda''_p ,
\end{align*}
and this proves~\eqref{eq63ea3f57}.

If~\eqref{cptsuppb} holds, then the triple integral in~\eqref{eq63c6ec74} is finite also for $A=\Omega$ and $p>n$.
In this case, we can obtain directly~\eqref{eq63c7c112} for $A=\Omega$ without passing through the approximation $A^\epsilon$,
where we used the stronger hypothesis $p>2n$.
\end{proof}

The extension of Theorem~\ref{thm5e9a10ec_bis} to the case of Sobolev spatial regularity requires a
global approximation of $b$ by more regular vector fields that seems to be not trivial, also because of
the weight function $\ell(s,x)$ depending on the vector field itself. In addition, the non-doubling property
of the exponential function is  source of extra difficulties, when performing standard convolution arguments.
In the case of a weight independent of $t$, we addressed this problem in the note~\cite{ANGSC}, from which we extract the following result.

\begin{theorem}[{\cite[Theorem~3 and Remark~25]{ANGSC}}]\label{thm:MS} 
Let $\Omega$ be a bounded open set and let $w:\Omega\to (0,+\infty)$, with $w+w^{-1}\in L^\infty_\loc(\Omega)$.
\begin{itemize}
\item[(i)] If $b\in \Lbs^1(I;W^{1,1}_\loc(\Omega;\R^n))$ satisfies 
\begin{equation}\label{expintapprox}
\int_I\int_\Omega w(x)\exp(c\|D_x b(s,x)\|)\dd x\dd s<+\infty,
\end{equation}
for some $c>0$, then there exist $b_h\in C^\infty(I\times\Omega;\R^n)$ satisfying, whenever $\Omega'\Subset\Omega$,
\begin{equation}\label{bhtobunif}
\text{$b_h$ converge to $\tilde{b}$ in $L^1(I;C(\Omega';\R^n))$ as $h\to\infty$}\,,
\end{equation}
where $\tilde b$ is the space continuous representative of $b$,
\begin{equation}\label{DxbhtoDxbL1}
D_xb_h\to D_xb\text{ in }L^1(I;L^1(\Omega';\R^n)), \text{ as $h\to\infty$}\,,
\end{equation}
and 
\begin{equation}\label{convexpenergy}
\lim_{h\to\infty}\int_I\int_\Omega w(x)\exp(c\|D_x b_h(s,x)\|)\dd x\dd s=
\int_I\int_\Omega w(x)\exp(c\|D_x b(s,x)\|)\dd x\dd s.
\end{equation}
\item[(ii)]
If $ b\in L^1(I;W^{1,1}_{\rm loc}(\R^n;\R^n))$  
and there exists a bounded open set $\Omega$ such that
\begin{equation}\label{bcptsupp}
{\rm spt}(b(t,\cdot))\subset \Omega\text{ for each }t\in I\,,
\end{equation}
and $b$ satisfies \eqref{expintapprox} on $\Omega$,
then there exist $b_h\in C^\infty(I\times\R^n;\R^n)$ still satisfying \eqref{bhtobunif}, \eqref{DxbhtoDxbL1},  \eqref{convexpenergy} and also
\begin{equation}\label{bhcptsupp}
{\rm spt}(b_h(t,\cdot))\subset \Omega\text{ for each }t\in I \text{ and }h\in\N\,.
\end{equation}
\end{itemize}
\end{theorem}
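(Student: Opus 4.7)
The plan is a spatial mollification argument. Let $\rho\in C^\infty_c(B(0,1))$ be nonnegative and radial with $\int\rho=1$, set $\rho_h(x):=h^n\rho(hx)$, and define $b_h(t,x):=(b(t,\cdot)*\rho_h)(x)$. For part~(i) this produces $b_h$ smooth on $I\times\Omega_h$, where $\Omega_h:=\{x\in\Omega:\operatorname{dist}(x,\partial\Omega)>1/h\}$ exhausts $\Omega$ as $h\to\infty$; a smooth cutoff returns $b_h\in C^\infty(I\times\Omega;\R^n)$ in a manner that preserves the desired convergences on each $\Omega'\Subset\Omega$. For part~(ii), extending $b$ by zero to $\R^n$ and mollifying, one has $\spt(b_h(t,\cdot))\subset\Omega$ whenever $1/h<\operatorname{dist}(\spt(b(t,\cdot)),\partial\Omega)$; working with a slightly enlarged open set and then relabelling yields~\eqref{bhcptsupp} uniformly in $t$.

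The convergences \eqref{bhtobunif} and \eqref{DxbhtoDxbL1} follow from standard mollification together with dominated convergence in $t$. The hypothesis \eqref{expintapprox}, combined with $w+w^{-1}\in L^\infty_\loc(\Omega)$, gives $\|D_xb\|\in L^p_\loc(I\times\Omega)$ for every $p<\infty$ (via the elementary bound $t^p\le C_p e^{ct}$), hence $b(t,\cdot)\in W^{1,p}_\loc(\Omega;\R^n)$ for a.e.~$t$ and $\tilde b(t,\cdot)\in C(\Omega;\R^n)$ by the Sobolev embedding with $p>n$. For a.e.~$t$ and every $\Omega'\Subset\Omega$, standard mollification yields uniform convergence $b_h(t,\cdot)\to\tilde b(t,\cdot)$ on $\Omega'$ and $L^1(\Omega')$ convergence of $D_xb_h(t,\cdot)$. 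The Sobolev embedding bound $\|b_h(t,\cdot)\|_{C(\Omega')}\le C\|b(t,\cdot)\|_{W^{1,p}(\Omega'')}$ (for $\Omega'\Subset\Omega''\Subset\Omega$ and $p>n$), together with $L^1(I)$ integrability of the right-hand side (obtained by H\"older from the hypothesis on $\int_I\int_{\Omega''}\exp(c\|D_xb\|)\dd x\dd t$), provides the required majorant, whence \eqref{bhtobunif} follows; a similar argument gives \eqref{DxbhtoDxbL1}.

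The delicate step, and the main obstacle, is the energy convergence \eqref{convexpenergy}: the exponential is non-doubling, so direct dominated convergence from the pointwise bound $\|D_xb_h\|\le\|D_xb\|*\rho_h$ is not available. I would combine a Jensen upper bound with a Fatou lower bound. Jensen applied to the probability measure $\rho_h(z)\dd z$ yields
\[
\exp(c\|D_xb_h(t,x)\|)
\le \exp\bigl(c(\|D_xb(t,\cdot)\|*\rho_h)(x)\bigr)
\le \bigl(\exp(c\|D_xb(t,\cdot)\|)*\rho_h\bigr)(x) .
\]
Multiplying by $w(x)$, integrating in $x$, and shifting the convolution onto $w$ by Fubini produces
\[
\int_\Omega w(x)\exp(c\|D_xb_h(t,x)\|)\dd x
\le \int_\Omega (w*\rho_h)(y)\exp(c\|D_xb(t,y)\|)\dd y .
\]
Since $w+w^{-1}\in L^\infty_\loc(\Omega)$, the convolutions $w*\rho_h$ converge to $w$ pointwise a.e.\ and are locally dominated uniformly in $h$; dominated convergence (justified by~\eqref{expintapprox}) gives the $\limsup$ inequality. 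For the reverse $\liminf$, extract a subsequence along which $D_xb_h\to D_xb$ almost everywhere, whence $\exp(c\|D_xb_h\|)\to\exp(c\|D_xb\|)$ a.e., and apply Fatou's lemma weighted by $w$. Combining both directions yields \eqref{convexpenergy}. The crux is that the hypothesis $w+w^{-1}\in L^\infty_\loc$ is needed on both sides of this argument: it makes $w*\rho_h\to w$ dominatedly in the Jensen step, and it simultaneously guarantees enough interior Sobolev regularity of $b$ to run the mollification in concert with the $C(\Omega')$-convergence above.
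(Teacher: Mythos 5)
Your interior argument (Jensen for the upper bound, Fatou for the lower bound, Morrey embedding plus a dominated majorant in $t$ for \eqref{bhtobunif} and \eqref{DxbhtoDxbL1}) is sound on compact subsets $\Omega'\Subset\Omega$, but the theorem is not an interior statement: \eqref{convexpenergy} is an equality of energies over all of $\Omega$, with a weight $w$ that is only \emph{locally} bounded above and below and, in the very application the paper needs (proof of Theorem~\ref{thm5e9a10ec}), blows up at $\partial\Omega$ like ${\rm dist}(x,\partial\Omega)^{n/(n-p)}$. This is where your single-scale mollification breaks down, in two places. First, $b(t,\cdot)*\rho_h$ is only defined on $\{ {\rm dist}(\cdot,\partial\Omega)>1/h\}$, and the unspecified ``smooth cutoff'' that turns it into an element of $C^\infty(I\times\Omega;\R^n)$ (or, in (ii), keeps the support inside $\Omega$) produces in the boundary collar a derivative term of the type $D\chi_h\otimes(b*\rho_h)$, of size of order $h\,|b|$; since $b$ is not assumed bounded in (i) and $w$ may blow up at $\partial\Omega$, the weighted exponential of this term is in no way controlled by \eqref{expintapprox}. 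Second, the dominated-convergence step after Fubini requires a majorant for $(w*\rho_h)\exp(c\|D_xb\|)$ that is uniform in $h$ up to the boundary, i.e.\ essentially $w*\rho_h\le C\,w$ near $\partial\Omega$; the hypothesis $w+w^{-1}\in L^\infty_\loc(\Omega)$ gives the domination you invoke only on compact subsets and says nothing about how $w$ oscillates or blows up near $\partial\Omega$, so the tail of the energy near $\partial\Omega$ is not controlled and the $\limsup$ inequality does not close. In part (ii) there is the further problem that ${\rm dist}(\spt b(t,\cdot),\partial\Omega)$ has no positive lower bound uniform in $t$, so a fixed mollification scale need not keep $\spt b_h(t,\cdot)\subset\Omega$, and ``relabelling a slightly enlarged open set'' changes the set appearing in \eqref{bhcptsupp}, which is not permitted.

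For comparison: the paper does not prove this statement at all; it imports it from the companion note \cite{ANGSC}, and the paragraph introducing Theorem~\ref{thm:MS} explicitly flags that the non-doubling property of the exponential ``is source of extra difficulties, when performing standard convolution arguments'' --- precisely the step your sketch treats as routine. What is needed (and what the companion note carries out) is a global approximation in the spirit of Meyers--Serrin: a partition of unity subordinate to an exhaustion of $\Omega$ and mollification at scales that shrink with the distance to $\partial\Omega$, chosen on each piece so small that the translated weighted energy exceeds the original by at most $\epsilon 2^{-k}$ (absolute continuity of the integral and continuity of translations applied to $w\exp(c\|D_xb\|)$, combined with your local Jensen bound), with the cross terms $\sum_k D\zeta_k\otimes(b_{\epsilon_k}-b)$ made uniformly small using the continuity of $b(t,\cdot)$; the variable scale also keeps the supports inside $\Omega$ in case (ii). Your Jensen/Fatou mechanism is the correct local ingredient, but without a variable-scale, partition-of-unity construction the boundary behaviour --- the whole reason the weight $w$ is in the statement --- is not handled, so the proposal has a genuine gap rather than being an alternative proof.
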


\begin{proof}[Proof of Theorem~\ref{thm5e9a10ec}] 
Let
\[
w(x):=\max\left\{
	\ell^{n/(n-p)},\frac{({\rm dist}(x,\partial\Omega))^{n/(n-p)}}{(\sup |b|)^{n/(n-p)}}
	\right\},
\]
so that, with $c=\,\ell p^2/(p-n)$, one has
$$\int_I\int_\Omega w(x)\exp(c\|D b(s,x)\|)\dd x\dd s<+\infty$$
and we may apply Theorem~\ref{thm:MS} (i) to $b$. Since $b$ is bounded, we can also
assume, by a truncation argument, that $\sup |b_h|\leq\sup |b|$. 
In addition, the quantity $\Lambda_p'$ in~\eqref{eq63eb586e}
is finite and satisfies
$\Lambda_p'\leq \Lambda_p$, using the inequality~\eqref{eq6283574e}.

It follows from these considerations that 
$$
\limsup_{h\to\infty}\Lambda_{p,h}'\leq \limsup_{h\to\infty}\Lambda_{p,h}\leq  \Lambda_p<+\infty\,,
$$
where
\begin{equation}\label{Lambdahprime}
\Lambda_{p,h}':=\int_I\int_\Omega (\ell_h(s,x))^{n/(n-p)}\exp\biggl(\frac{\ell p^2}{p-n}\|D_xb_h(s,x)\|\biggr)\dd x \dd s\,,
\end{equation}
and similarly $\Lambda_{p,h}$ is defined with $b_h$.

Now, in order to apply \eqref{eq63ea3e58} from Theorem~\ref{thm5e9a10ec_bis} to $b_h$ and the pass to the limit $h\to\infty$, it suffices by Fatou's lemma to prove that 
\begin{equation}\label{eq63eb631f}
\int_{\Omega_{(t,s)}} \|D_xX(t,s,x)\|^p \dd x\leq\liminf_{h\to\infty}\int_{\Omega^h_{(t,s)}} \|D_xX^h(t,s,x)\|^p \dd x\,,
\end{equation}
where $\Omega^h_{(t,s)}$ and $X^h$ are relative to the vector fields $b_h$. Now, \eqref{eq:domains}, derived from
Lemma~\ref{lem5e996a0e}, yields that
for any open domain $A\Subset\Omega_{(t,s)}$ one has $A\Subset\Omega^h_{(t,s)}$ for $h$ large enough, and that
$X^h(t,s,\cdot)$ converge to $X(t,s,\cdot)$ uniformly on $A$. Hence the lower semicontinuity of
$w\mapsto\int_A|D_xw|^p\dd x$ yields
$$
\int_A \|D_xX(t,s,x)\|^p \dd x\leq\liminf_{h\to\infty}\int_A \|D_xX^h(t,s,x)\|^p \dd x\leq\liminf_{h\to\infty}
\int_{\Omega^h_{(t,s)}} \|D_xX^h(t,s,x)\|^p \dd x\,.
$$
Letting $A\uparrow\Omega_{(t,s)}$ we obtain the claimed semicontinuity property~\eqref{eq63eb631f}.
\end{proof}

\begin{proof}[Proof of Theorem \ref{thmbcpcsupp}]
First assume that $b$ is understood as vector field $b:\,I\times\Omega\to\R^n$. Let $w\equiv 1$
so that, with $c=\,\ell\,p^2/(p-n)$, one has
$$\int_I\int_\Omega \exp(c\|D b(s,x)\|)\dd x\dd s<+\infty\,.$$
From Theorem~\ref{thm:MS} (ii),  let $b_h:\,I\times\Omega\to\R^n$ be the regular sequence of vector fields 
satisfying \eqref{bhtobunif}, \eqref{DxbhtoDxbL1}, \eqref{convexpenergy} and \eqref{bhcptsupp}.

It follows from \eqref{bcptsupp} that $\ell_h(t,x)=\ell$ and thus
\[
\limsup_{h\to\infty}\Lambda_{p,h}'
\leq   
\frac1{\ell}
\int_I\int_\Omega \exp(c\|D b(s,x)\|)\dd x\dd s<+\infty\,,
\]
where $\Lambda_{p,h}'$ is the quantity in \eqref{Lambdahprime}.

Now, in order to apply \eqref{eq63ea3f57} with $p>n$ from Theorem~\ref{thm5e9a10ec_bis} to $b_h$
and then pass to the limit $h\to\infty$, we only need to use~\eqref{eq63eb631f} again.


Assume now that $b$ is understood as vector field $b:\,I\times\R^n\to\R^n$. It is clear that the flow map $X:\,I\times I\times\R^n\to\R^n$ is identically  equal to the identity on  $I\times I\times (\R^n\setminus\overline{\Omega'})$. Since \eqref{eq5e9998eb} holds for any bounded open set $\Omega\Supset\Omega'$, then
 $X(t,s,\cdot)\in W^{1,p}_{\rm loc}(\R^n)$.
\end{proof}

\begin{corollary}\label{cor5ec7b067}
Under the assumptions of Theorem~\ref{thm5e9a10ec},  we have, for every $t\in I$,
\begin{equation}\label{eq:areast}
\begin{aligned}
	&\text{for almost every $s\in I$}\\
	&X(t,s,\cdot)_\# \Lbm^n\res\Omega_{(t,s)} = J_X(s,t,\cdot) \Lbm^n\res\Omega_{(s,t)}=
	\,\frac{1}{J_{X(t,s,X(s,t,\cdot))}}\Lbm^n\res\Omega_{(s,t)}\,,
\end{aligned}
\end{equation}
where $J_X(t,s,\cdot) = \det(D_x X(t,s,\cdot)) \in \Lbs^{p/n}(\Omega_{(t,s)})$ is non-zero almost everywhere. Under the assumptions of Theorem~\ref{thmbcpcsupp}, then \eqref{eq:areast} holds for every $s,\,t\in I$ replacing both $\Omega_{(t,s)}$ and $\Omega_{(s,t)}$ by $\R^n$.
\end{corollary}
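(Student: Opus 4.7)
Fix $t\in I$ and, by Theorem~\ref{thm5e9a10ec}, pick $s\in I$ (full measure set) such that both $X(t,s,\cdot)\in W^{1,p}(\Omega_{(t,s)};\R^n)$ and $X(s,t,\cdot)\in W^{1,p}(\Omega_{(s,t)};\R^n)$ with $p>2n>n$. The first order of business is to extract pointwise properties from this Sobolev information: by Lemma~\ref{lem5e9a1230}, both maps are differentiable almost everywhere and satisfy the Lusin (N) condition. Hadamard's inequality~\eqref{eq03161234} together with $X(t,s,\cdot)\in W^{1,p}$ immediately gives $|J_X(t,s,\cdot)|\le \|D_xX(t,s,\cdot)\|^n\in L^{p/n}(\Omega_{(t,s)})$, which is the integrability claim.

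Next I would establish the pointwise inverse formula. Since $X(s,t,\cdot)=X(t,s,\cdot)^{-1}$ is differentiable almost everywhere on $\Omega_{(s,t)}$, and hence (via the bijection) $X(t,s,\cdot)^{-1}$ is differentiable on a set of full measure of the image, Lemma~\ref{lem5e9a12a0} forces $J_X(t,s,\cdot)\neq 0$ almost everywhere and yields
\[
D_xX(s,t,\cdot)(y)=\bigl(D_xX(t,s,\cdot)(X(s,t,y))\bigr)^{-1}\quad\text{for a.e.~}y\in\Omega_{(s,t)}.
\]
Taking determinants gives $J_X(s,t,y)=1/J_X(t,s,X(s,t,y))$ for a.e.~$y$, which is the second equality in~\eqref{eq:areast}.

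For the push-forward identity, apply the area formula (Lemma~\ref{lem04031003}) to the homeomorphism $\Psi:=X(s,t,\cdot):\Omega_{(s,t)}\to\Omega_{(t,s)}$: since $\Psi$ satisfies Lusin (N) and is a bijection (so $N(\Psi,\Omega_{(s,t)},\cdot)\equiv 1$), for every Borel set $E\subset \Omega_{(s,t)}$,
\[
\int_E |J_X(s,t,y)|\dd y=\int_{\Omega_{(s,t)}}\chi_{\Psi(E)}(\Psi(y))|J_X(s,t,y)|\dd y=\int_{\Omega_{(t,s)}}\chi_{\Psi(E)}(x)\dd x=\Lbm^n(X(s,t,E)).
\]
Since $X(s,t,E)=X(t,s,\cdot)^{-1}(E)$, the left-hand side equals $X(t,s,\cdot)_\#\Lbm^n\res\Omega_{(t,s)}(E)$, giving the first equality up to an absolute value.

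The remaining (mildly delicate) point is to verify that $J_X(s,t,\cdot)\ge 0$ almost everywhere, so that the absolute value can be dropped and \eqref{eq:areast} reads as stated. For this I would invoke the smooth approximation $b_h$ from Theorem~\ref{thm:MS} used in the proof of Theorem~\ref{thm5e9a10ec}: for each $b_h\in C^0(I;C^1(\Omega;\R^n))$, Lemma~\ref{lem04030932} yields $J_{X^h}(s,t,\cdot)>0$ pointwise via the Liouville ODE, and the Sobolev convergence $D_xX^h(s,t,\cdot)\to D_xX(s,t,\cdot)$ in $L^p_\loc$ (derived in the proof of Theorem~\ref{thm5e9a10ec}) passes to a subsequence converging almost everywhere, so $J_X(s,t,\cdot)\ge 0$ a.e.\ in the limit; combined with the non-vanishing property shown above, this gives $J_X(s,t,\cdot)>0$ a.e. The statement under the hypotheses of Theorem~\ref{thmbcpcsupp} is identical, with $\Omega_{(t,s)}=\Omega_{(s,t)}=\R^n$ (the flow being the identity outside the compact support of $b$). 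The only subtle step is the positivity/passage-to-the-limit argument, since the rest is a direct application of the area formula and inverse function theorem for Sobolev homeomorphisms.
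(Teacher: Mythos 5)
Your core argument coincides with the paper's own proof: both rest on Lemma~\ref{lem5e9a1230} (a.e.~differentiability and the Lusin (N) condition for $X(t,s,\cdot)$ and its inverse, available since $p>2n>n$), on Lemma~\ref{lem5e9a12a0} to get $J_X\neq0$ almost everywhere, and on the area formula of Lemma~\ref{lem04031003} applied to the homeomorphism $X(s,t,\cdot)$ with multiplicity $N\equiv1$. Your integrability argument via Hadamard's inequality, $|J_X|\le\|D_xX\|^n\in \Lbs^{p/n}$, is actually simpler than the paper's (Laplace expansion plus H\"older) and is correct.

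The weak point is your final positivity step. The area formula produces $|J_X(s,t,\cdot)|$, and to drop the absolute value you invoke strong $\Lbs^p_\loc$ convergence $D_xX^h(s,t,\cdot)\to D_xX(s,t,\cdot)$ together with an a.e.~convergent subsequence of the Jacobians. That convergence is not established anywhere: the proof of Theorem~\ref{thm5e9a10ec} only yields locally uniform convergence of $X^h$ and lower semicontinuity of $\int\|D_xX^h\|^p$, hence at best weak $\Lbs^p$ convergence of $D_xX^h$ along subsequences (this is precisely what the paper uses later, in Corollary~\ref{cor5edfe500}); moreover the a priori bound~\eqref{eq63ea3e58} is integrated in $s$, so even a uniform-in-$h$ bound at a fixed pair $(t,s)$ needs an extra argument. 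Pointwise sign information does not pass through weak limits of the nonlinear quantity $\det D_xX^h$ without a further ingredient (for instance weak continuity of Jacobians for $p>n$, or the fact that a $W^{1,1}_\loc$ homeomorphism has a.e.~constant-sign Jacobian). The paper sidesteps the issue: in this corollary it only records $J_X\neq0$ and $J_X\in \Lbs^{p/n}$, and strict positivity is deferred to Corollary~\ref{cor5edfe500}, where it follows from the distributional ODE for $D_xX$ and the explicit formula $J_X(t,s,x)=\exp\bigl(\int_s^t\div_xb(v,X(v,s,x))\dd v\bigr)$. Either quote one of those facts or restructure as the paper does; as written, your positivity paragraph is a genuine gap, while the rest of the argument matches the paper.
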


In fact, $J_X$ is strictly positive, as we will show in the next corollary.

\begin{proof}
	Since $X(t,s,\cdot)^{-1} = X(s,t,\cdot)$ is also in $W^{1,p}_\loc(\Omega_{(s,t)};\R^n)$ with $p>2n$, then both maps $X(t,s,\cdot)$ and $X(t,s,\cdot)^{-1}$ are differentiable almost everywhere by Lemma~\ref{lem5e9a1230}.
	Therefore, we have $J_X(t,s,x)\neq0$ by Lemma~\ref{lem5e9a12a0}.
	By Lemma~\ref{lem5e9a1230}, $X(t,s,\cdot)$ satisfies Lusin's (N) condition, and thus, by Lemma~\ref{lem04031003}, the area formula holds, that is, \eqref{eq:areast} holds. Moreover, by the Laplace expansion of the determinant, one can easily prove, by induction on the order matrix and H\"older inequality, that  $J_X(t,s,\cdot) = \det(D_x X(t,s,\cdot)) \in \Lbs^{p/n}(\Omega_{(t,s)})$.  The same argument applies under the assumptions of Theorem \ref{thmbcpcsupp}.
\end{proof}

\begin{corollary}\label{cor5edfe500}
	Under the assumptions of Theorem~\ref{thm5e9a10ec}, for every $(s,x)\in I\times\Omega$, if we set
	\begin{align*}
	y(t)&:=D_xX(t,s,x) , &
	B(t)&:=(D_xb)(t,X(t,s,x)) , \\
	J(t)&:=\det(D_xX(t,s,x)) , &
	\beta(t)&:=\div_x b(t,X(t,s,x)) = \trace(B(t)),
	\end{align*}
	then $y$ and $J$ are absolutely continuous solutions to the initial value problems
	\begin{align}
		&\label{align5e9a1b36}
		\begin{cases}
			\dot{y}(t) = B(t) y(t) , \\
			y(s)= \mathrm{Id} .
		\end{cases} \\
		&\label{align5e9a1b40}
		\begin{cases}
			\dot{J}(t)=\beta(t)J(t) , \\
			J(s)=1 .
		\end{cases}
	\end{align}
	Moreover, for almost every $(t,s,x)\in\Dom_b$ we have
	\begin{align}
	\label{eq5e9a1b60}
		D_xX(t,s,x) &= \exp\left( \int_s^t D_x b(v,X(v,s,x)) \dd v \right) , \\
	\label{eq5e9a1e0a}
		J_X(t,s,x) &= \exp\left( \int_s^t \div_xb(v,X(v,s,x)) \dd v \right) .
	\end{align}
	In particular, $J_X>0$ almost everywhere.
\end{corollary}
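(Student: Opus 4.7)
The plan is to derive the identities by smooth approximation, transferring Lemma~\ref{lem04030932} from the approximating flows to the limiting one. Using Theorem~\ref{thm:MS}(i) with the weight $w$ chosen in the proof of Theorem~\ref{thm5e9a10ec}, I would obtain $b_h\in C^\infty(I\times\Omega;\R^n)$ with $b_h\to\tilde b$ in $L^1(I;C(\Omega';\R^n))$ for every $\Omega'\Subset\Omega$, with $D_xb_h\to D_xb$ in $L^1_{\loc}$, and with convergence of the exponential energies. Then Lemma~\ref{lem5e996a0e} together with~\eqref{stronginclapprox}--\eqref{eq:domains} gives, for every $(s,x)\in I\times\Omega$ and every compact $J\subset I_{(s,x)}$, the inclusion $J\subset I^h_{(s,x)}$ for $h$ large and the uniform convergence $X^h(\cdot,s,x)\to X(\cdot,s,x)$ on $J$.

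For each smooth $b_h$, Lemma~\ref{lem04030932} provides the $C^1$ matrix and scalar solutions $y_h(t):=D_xX^h(t,s,x)$ and $J_h(t):=J_{X^h}(t,s,x)$ of the ODE analogues of~\eqref{align5e9a1b36}--\eqref{align5e9a1b40} with coefficients $B_h(t):=(D_xb_h)(t,X^h(t,s,x))$ and $\beta_h(t):=\trace B_h(t)$. Liouville's formula for the scalar linear equation $\dot J_h=\beta_h J_h$ yields directly
\[
J_h(t)=\exp\Bigl(\int_s^t \div_x b_h(v,X^h(v,s,x))\,\dd v\Bigr)>0,
\]
and the matrix version~\eqref{eq5e9a1b60} (understood via the solution operator of~\eqref{align5e9a1b36}) holds analogously for $y_h$.

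The crux is the passage to the limit $h\to\infty$ for a.e.~$x$. Combining the uniform convergence $X^h\to X$ on compact subintervals with the $L^1$ convergence $D_xb_h\to D_xb$ and the change-of-variables along the flow granted by Corollary~\ref{cor5ec7b067} (using the $L^{p/n}$ control on Jacobians and~\eqref{eq63ea3e58}), I would show that $\div_x b_h(\cdot,X^h(\cdot,s,x))\to \div_x b(\cdot,X(\cdot,s,x))$ in $L^1_{\loc}(I_{(s,x)})$ for a.e.~$x$, and likewise $B_h\to B$ entry-wise. The uniform bound~\eqref{eq09071038} (available for $b_h$ from Lemma~\ref{lem04031001}) then permits passage to the limit in the ODEs and inside the exponential integrals, yielding absolutely continuous $y,J$ satisfying~\eqref{align5e9a1b36}--\eqref{align5e9a1b40} together with~\eqref{eq5e9a1b60}--\eqref{eq5e9a1e0a}. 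The identification $y(t)=D_xX(t,s,x)$ a.e.~then follows from the Sobolev regularity $X(t,s,\cdot)\in W^{1,p}_{\loc}$ supplied by Theorem~\ref{thm5e9a10ec} and lower semicontinuity of the Sobolev norm along $b_h\to b$. The positivity $J_X>0$ a.e.~is an immediate consequence of~\eqref{eq5e9a1e0a}.

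The main obstacle will be the strong convergence of the compositions $D_xb_h(v,X^h(v,s,x))$ along almost every trajectory, since $D_xb$ is only weakly differentiable and the flow is only Sobolev regular. The key ingredient is the uniform absolute continuity $X^h(t,s,\cdot)_\#\Lbm^n\ll \Lbm^n$ with Radon--Nikodym densities uniformly controlled in a suitable Lebesgue space, extracted from Corollary~\ref{cor5ec7b067} and the $L^p$ estimates of Theorem~\ref{thm5e9a10ec_bis}; this is what allows one to upgrade the $L^1$ convergence of $D_xb_h$ on $I\times\Omega'$ to $L^1$ convergence of the trace along a.e.~trajectory by a Vitali-type argument.
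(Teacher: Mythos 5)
Your overall route --- regularize $b$ via Theorem~\ref{thm:MS}, solve the linear systems for the smooth flows $X^h$ by Lemma~\ref{lem04030932}, and pass to the limit --- is in the same family as the paper's argument, but you execute the limit trajectory-wise (ODE stability along a.e.\ integral curve, requiring strong $L^1$ convergence of the compositions $D_xb_h(v,X^h(v,s,x))$ along a.e.\ trajectory), whereas the paper works distributionally: it first shows, by the change of variables $z=X(t,s,x)$ and $J_X\in \Lbs^{p/n}$ from Corollary~\ref{cor5ec7b067}, that $B\in \Lbs^1(I_{(s,x)})$ for a.e.\ $x$, and then computes $\de_t D_xX$ as a distribution on $\Dom_b$, passing to the limit in $\int D_xb_\epsilon(t,X_\epsilon)D_xX_\epsilon\,\phi$ using only the weak $\Lbs^p(\Dom_b)$ convergence $D_xX_\epsilon\rightharpoonup D_xX$ (uniform bounds plus locally uniform convergence of $X_\epsilon$) and the strong $\Lbs^q_\loc$ convergence of $D_xb_\epsilon$ against test functions. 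The paper's route is lighter precisely because it never needs your Vitali-type upgrade to convergence of traces along individual trajectories; your version can probably be made to work with the uniform push-forward density bounds you invoke (plus an approximation of $D_xb$ by continuous functions to compare $D_xb(v,X^h)$ with $D_xb(v,X)$), but it is the heavier path.

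There is, however, a genuine gap at the identification step. You obtain, along a.e.\ trajectory, an absolutely continuous limit $\tilde y$ of $y_h(t)=D_xX^h(t,s,x)$ solving $\dot{\tilde y}=B\tilde y$, $\tilde y(s)=\mathrm{Id}$, and you claim that $\tilde y(t)=D_xX(t,s,x)$ ``follows from the Sobolev regularity of $X(t,s,\cdot)$ and lower semicontinuity of the Sobolev norm along $b_h\to b$.'' Lower semicontinuity only yields norm bounds; it cannot identify the limit of $D_xX^h$ with the weak derivative of $X$, so as written this step fails and the whole conclusion (that the solutions of \eqref{align5e9a1b36}--\eqref{align5e9a1b40} are indeed $D_xX$ and $J_X$, hence \eqref{eq5e9a1b60}--\eqref{eq5e9a1e0a} and $J_X>0$) is not reached. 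What is needed, and what the paper supplies, is the weak convergence $D_xX^h\rightharpoonup D_xX$ in $\Lbs^p(\Dom_b)$: the uniform $\Lbs^p$ bounds give weak compactness, and the locally uniform convergence $X^h\to X$ (Lemma~\ref{lem5e996a0e}, \eqref{eq:domains}) identifies any weak limit with the distributional derivative $D_xX$; combining this with your a.e.-pointwise, uniformly integrable convergence $y_h\to\tilde y$ (via \eqref{eq09071038} and the exponential energy bounds) then forces $\tilde y=D_xX$ a.e.\ by uniqueness of limits. Without adding this (or an equivalent identification, which is essentially the paper's second step), your proposal proves existence of absolutely continuous solutions with the exponential representation, but not that they coincide with $D_xX(t,s,x)$ and $J_X(t,s,x)$.
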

\begin{proof}
	First, we claim that, given $s$,  for almost every $x$ the matrix $B$ belongs to $\Lbs^1(I_{(s,x)};\R^{n^2})$, where $I_{(s,x)}$ is defined in Section~\ref{sec5ed296d6}.
	Indeed, the change of variables $z=X(t,s,x)$ and the identity
	$J_X(s,t,X(t,s,x)) = 1/J_X(t,s,x)$ give
	\[
	\int_I\int_{\Omega_{(s,t)}} \| (D_xb)(t,X(t,s,x)) \| \dd x\dd t
	= \int_I\int_{\Omega_{(t,s)}} \| (D_xb)(t,z) \| J_X(s,t,z) \dd z\dd t .
	\]
	The latter integral is finite because $D_xb\in \Lbs^q$ for all $q$ and $J_X\in \Lbs^{p/n}$.
	Therefore, the claim is true.
	
	Second, using the same approximation of $b$ as in the proof of Theorem~\ref{thm5e9a10ec}, 
	we know that $D_x X^h\to D_x X$ weakly in $\Lbs^p(\Dom_b)$, since
	$D_x X^h$ are uniformly bounded bounded in $\Lbs^p(\Dom_b)$.
	
	Third, 
	we see that the distributional derivative $\de_tD_x X$ has the following form: for every $\phi\in C^\infty_c(\Dom_b;\R^n)$,
	\begin{align*}
	\de_tD_x X [\phi]
	&= - \int_{\Dom_b} D_x X \de_t\phi \dd t\dd s \dd x \\
	&= - \lim_{\epsilon\to0} \int_{\Dom_b} D_x X_\epsilon \de_t\phi \dd t\dd s \dd x \\
	&=  \lim_{\epsilon\to0} \int_{\Dom_b} D_xb_\epsilon(t,X_\epsilon(t,s,x)) D_x X_\epsilon(t,s,x) \phi \dd t\dd s \dd x .
	\end{align*}
	Now, $X_\epsilon\to X$ uniformly on compact sets and $D_xb_\epsilon\to D_xb$ in $\Lbs^q_\loc(\Dom_b)$ for all $q$.
	So, using Hölder inequality and the Lebesgue dominated convergence theorem, we obtain that the limit above is 
	\[
	\de_tD_x X [\phi]
	= \int_{\R^{2+n}} D_xb(t,X(t,s,x)) D_x X(t,s,x) \phi \dd t\dd s \dd x .
	\]
	In other words, $\de_tD_x X = D_xb(t,X(t,s,x)) D_x X(t,s,x)$.
	This shows that $y$ is solution to the Cauchy system~\eqref{align5e9a1b36}.
	Since $B$ is integrable, then we get~\eqref{eq5e9a1b60} by integrating this Cauchy system.
	
	Finally, the validity of~\eqref{align5e9a1b40} follows in a standard way from \eqref{align5e9a1b36} and
	\eqref{align5e9a1b40} implies~\eqref{eq5e9a1e0a}.
\end{proof}

\begin{corollary}\label{cor63edf816}
	Under the assumptions of Theorem~\ref{thm5e9a10ec},
	for every $1<r<p-n$ and for every $s,t\in\overline{I}$, 
	we have that
	\begin{equation}\label{eq63ebf552}
		X(t,s,\cdot)\in W^{1,r}(\Omega_{(t,s)};\R^n) .
	\end{equation}
\end{corollary}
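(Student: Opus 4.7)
The plan is to exploit the exponential representation of $D_xX$ from Corollary~\ref{cor5edfe500} together with the weighted exponential summability $\Lambda_p<\infty$ and the a priori bound~\eqref{eq63eb5e33} from Theorem~\ref{thm5e9a10ec}, rather than going through the composition machinery (which, via Lemma~\ref{lem5ec7d831} and Proposition~\ref{prop5ec7f3d8} applied to the semigroup identity $X(t,s,\cdot)=X(t,\sigma,\cdot)\circ X(\sigma,s,\cdot)$ at a generic intermediate time $\sigma$, would only give the narrower range $r<p(p-n)/(2p-n)$).

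Assuming without loss of generality that $s<t$, I would first use Corollary~\ref{cor5edfe500} to estimate $\|D_xX(t,s,x)\|^r\le\exp\bigl(r\int_s^t\|D_xb(v,X(v,s,x))\|\dd v\bigr)$ for almost every $x\in\Omega_{(t,s)}$, and then apply Jensen's inequality with the uniform probability measure $\dd v/\ell_0$ on $[s,t]$ (where $\ell_0=t-s\le\ell$) to convert this into $\|D_xX(t,s,x)\|^r\le\ell_0^{-1}\int_s^t\exp(r\ell_0\|D_xb(v,X(v,s,x))\|)\dd v$. Integrating over $x\in\Omega_{(t,s)}$ and performing, for each $v\in(s,t)$, the change of variables $y=X(v,s,x)$ (which is legitimate by Corollary~\ref{cor5ec7b067}, with $\dd x=J_X(s,v,y)\dd y$, and which sends $\Omega_{(t,s)}$ bijectively into $\Omega_{(t,v)}\cap\Omega_{(s,v)}\subseteq\Omega_{(s,v)}$), the task reduces to bounding
\[
\frac{1}{\ell_0}\int_s^t\int_{\Omega_{(t,v)}\cap\Omega_{(s,v)}}\exp(r\ell_0\|D_xb(v,y)\|)J_X(s,v,y)\dd y\dd v.
\]

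Next I would apply H\"older's inequality on the joint $(v,y)$ integral with the conjugate exponents $q=p/(p-n)$ and $q'=p/n$, inserting the weight $w(y)=\max\{\ell^{n/(n-p)},({\rm dist}(y,\partial\Omega)/\sup|b|)^{n/(n-p)}\}$ that appears in~\eqref{eq:geometric}. The factor $(\int\int\exp(qr\ell_0\|D_xb\|)w\dd y\dd v)^{1/q}$ is controlled by $\Lambda_p^{1/q}$ provided $qr\ell_0\le\ell p^2/(p-n)$, a condition satisfied for every $r<p-n$ since $\ell_0\le\ell$ and $p-n<p$. The factor $(\int\int J_X^{q'}w^{-q'/q}\dd y\dd v)^{1/q'}$ is controlled by Hadamard's inequality $J_X(s,v,y)\le\|D_xX(s,v,y)\|^n$ from~\eqref{eq03161234} together with the direct computation $w^{-(p-n)/n}=\min\{\ell,{\rm dist}(y,\partial\Omega)/\sup|b|\}\le\ell$; the remaining integral $\int_I\int_{\Omega_{(s,v)}}\|D_xX(s,v,y)\|^p\dd y\dd v$ is then finite by~\eqref{eq63eb5e33} applied with the role of $t$ played by $s$.

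The main obstacle lies in the endpoint case $s\in\partial I$ or $t\in\partial I$, where Theorem~\ref{thm5e9a10ec} does not directly supply the bound~\eqref{eq63eb5e33} with $s$ as the fixed time. This is handled by an approximation argument: one picks $s_k\to s$ and $t_k\to t$ with $s_k,t_k\in I$ in the full-measure subset of times where the previous estimates apply, uses Lemma~\ref{lem5e996a0e} (together with~\eqref{eq:domains}) to obtain uniform convergence of $X(t_k,s_k,\cdot)$ to $X(t,s,\cdot)$ on compact subsets of $\Omega_{(t,s)}$, and concludes by lower semicontinuity of the $W^{1,r}$-seminorm together with the uniform bound from the preceding paragraph applied to each $(s_k,t_k)$.
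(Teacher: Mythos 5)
Your proposal is correct in substance, but it follows a genuinely different route from the paper's own proof. The paper argues softly: using \eqref{eq5ff85eab} it selects an intermediate time $u\in I$ at which both $X(s,u,\cdot)$ and $X(u,t,\cdot)$ lie in $W^{1,p}$ with nonvanishing Jacobian (Corollary~\ref{cor5edfe500}), writes $X(s,t,\cdot)=X(s,u,X(u,t,\cdot))$, and invokes the finite-distortion composition machinery (Lemma~\ref{lem5ec7d831} and Proposition~\ref{prop5ec7f3d8}); your parenthetical remark about that route is apt, since Proposition~\ref{prop5ec7f3d8} with outer exponent $p$ and target $r$ asks for $K^\Psi_p\in \Lbs^{r/(p-r)}_\loc$, while Lemma~\ref{lem5ec7d831} with a small auxiliary distortion exponent $q$ only controls $K^\Psi_q$, and the straightforward bookkeeping yields $r\le p(p-n)/(2p-n)$. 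You instead re-run, for a single fixed pair, the quantitative scheme of the proof of Theorem~\ref{thm5e9a10ec_bis}: the Gronwall-type bound from Corollary~\ref{cor5edfe500}, Jensen on $[s,t]$ (paying the factor $1/(t-s)$ instead of the paper's weight $\ell(s,x)$), the change of variables of Corollary~\ref{cor5ec7b067}, H\"older with exponents $p/(p-n)$ and $p/n$ weighted by the function in \eqref{eq:geometric}, Hadamard's inequality, and the a priori bound \eqref{eq63eb5e33} with $s$ as base time; this yields $\int_{\Omega_{(t,s)}}\|D_xX(t,s,x)\|^r\dd x\le \ell^{n/(p-n)}\Lambda_p/|t-s|$, whose only constraint is $r\,|t-s|\le \ell p$, so it covers every $r<p-n$ (indeed any $r\le p$) and degenerates only as $t\to s$, where the statement is trivial. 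What your approach buys is an explicit, quantitative bound for each pair of times and a wider admissible range of exponents, at the cost of repeating the integral estimates; what the paper's approach buys is brevity, by recycling the distortion/composition results already in Section~2. One point you should state more carefully: the pointwise ingredients you invoke (the exponential bound of Corollary~\ref{cor5edfe500}, the area formula of Corollary~\ref{cor5ec7b067}, and \eqref{eq63eb5e33} with $s$ as the fixed time) are available only for pairs $(s,t)$ in a full-measure subset of $I\times I$, not for an arbitrarily fixed pair, so the closing approximation-plus-lower-semicontinuity argument is needed for every target pair outside that set, not merely when $s$ or $t$ lies on $\partial I$; since your final step (good pairs $(s_k,t_k)\to(s,t)$, locally uniform convergence of the flows via Lemma~\ref{lem5e996a0e}-type continuity, a uniform bound, and lower semicontinuity of the Sobolev seminorm, with $s=t$ handled trivially) applies verbatim to any such pair, this is a matter of framing rather than a gap.
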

\begin{proof}
	The proof is an improvement of~\eqref{eq5ff85eab} through an application of
	Corollary~\ref{cor5edfe500}, Lemma~\ref{lem5ec7d831} and Proposition~\ref{prop5ec7f3d8}.
	Indeed, for every $1<r<p-n$ there exists $0<q<p^2-\frac{p^2}{p-n}$ such that
	\[
	r = \frac{p^2(p-n)}{q(p-n)+p^2},
	\quad\text{ that is, }\quad
	\frac{r}{p-r} = \left(\frac{q}{p} + \frac{n}{p-n}\right)^{-1}.
	\]
	So, given $s,t\in I$,~\eqref{eq5ff85eab} implies that there exists $u\in I$ such that 
	\[
	X(s,t,\cdot) = X(s,u,X(u,t,\cdot)) 
	\]
	and both maps $X(s,u,\cdot)$ and $X(u,t,\cdot)$ belong to $W^{1,p}$ on their domains,
	with non-zero Jacobian by Corollary~\ref{cor5edfe500}.
	We then apply Lemma~\ref{lem5ec7d831} and Proposition~\ref{prop5ec7f3d8} to prove that their composition is of class $W^{1,r}$ on its domain.
\end{proof}

\begin{corollary}\label{cor5ec2a4b4}
	Under the assumptions of Theorem~\ref{thm5e9a10ec}, we have
	\[
	X\in W^{1,p} (\Dom_b;\R^n) .
	\]
\end{corollary}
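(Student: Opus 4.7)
The plan is to establish $X \in W^{1,p}(\Dom_b;\R^n)$ by controlling each of the three partial derivatives $\partial_t X$, $\partial_s X$, $D_x X$ separately on the open set $\Dom_b \subset I\times I\times\Omega$. Observe first that, since $\Omega$ and $I$ are bounded, $\Dom_b$ has finite Lebesgue measure and $X$ takes values in $\Omega$, so $X\in L^\infty(\Dom_b;\R^n)\subset L^p(\Dom_b;\R^n)$ automatically. Thus the task reduces to showing that each of the three partials lies in $L^p(\Dom_b;\R^n)$ and coincides with the corresponding distributional derivative.

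For the spatial derivative, Theorem~\ref{thm5e9a10ec} gives, for every $t\in I$,
$$\int_I\int_{\Omega_{(t,s)}}\|D_xX(t,s,x)\|^p\,\dd x\,\dd s\leq \ell^{n/(p-n)}\Lambda_p,$$
and integrating in $t\in I$, using the Fubini description $\Dom_b=\{(t,s,x):(t,s)\in I\times I,\ x\in\Omega_{(t,s)}\}$, yields
$$\int_{\Dom_b}\|D_xX\|^p\,\dd t\,\dd s\,\dd x\leq \ell^{p/(p-n)}\Lambda_p<\infty.$$
That $D_xX$ really is the distributional gradient in $x$ on $\Dom_b$ is an application of Fubini combined with the fact that for a.e.\ $(t,s)$ one has $X(t,s,\cdot)\in W^{1,p}(\Omega_{(t,s)};\R^n)$, already granted by Theorem~\ref{thm5e9a10ec}. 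For the time derivative, the ODE directly gives $\partial_tX(t,s,x)=b(t,X(t,s,x))$, which is bounded in absolute value by $\sup|b|<\infty$; since $\Dom_b$ has finite measure, $\partial_tX\in L^\infty(\Dom_b;\R^n)\subset L^p(\Dom_b;\R^n)$, and the identification as the weak derivative is immediate from the absolute continuity of $t\mapsto X(t,s,x)$ on each slice $I_{(s,x)}$.

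The remaining and most delicate ingredient is $\partial_sX$. I would derive a pointwise formula by differentiating the semigroup identity $X(t,s,X(s,r,x))=X(t,r,x)$ from~\eqref{eq5ec5219e} in $s$ and setting $r=s$: the formal chain rule combined with $\partial_1 X(s,r,x)\big|_{r=s}=b(s,x)$ gives
$$\partial_sX(t,s,x)=-D_xX(t,s,x)\cdot b(s,x),$$
from which $|\partial_sX|\leq (\sup|b|)\,\|D_xX\|$, so the $L^p$ bound on $\partial_s X$ follows from the estimate on $D_xX$ already established. The main obstacle is justifying this formula for the non-smooth vector field $b$, since the chain rule is only available classically under the hypothesis $b\in C^0(I;C^1(\Omega;\R^n))$ of Lemma~\ref{lem04030932}. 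I would overcome this by approximating $b$ by the smooth vector fields $b_h$ produced by Theorem~\ref{thm:MS}, writing the identity for the $C^1$-flows $X_h$, and passing to the limit using the uniform convergence $X_h\to X$ on compact subsets of $\Dom_b$ (Lemma~\ref{lem5e996a0e}), the weak $L^p$ convergence $D_xX_h\rightharpoonup D_xX$ furnished by the uniform bound from the proof of Theorem~\ref{thm5e9a10ec}, and the strong convergence of $b_h$ to $b$. This is enough to identify $\partial_s X$ with $-D_x X\cdot b$ as distributions on $\Dom_b$, and combining the three $L^p$ estimates yields the desired conclusion $X\in W^{1,p}(\Dom_b;\R^n)$.
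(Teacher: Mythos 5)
Your proposal is correct and follows essentially the same route as the paper: bound $D_xX$ in $L^p(\Dom_b)$ via the estimate~\eqref{eq63eb5e33}, identify $\de_tX(t,s,x)=b(t,X(t,s,x))$ from the integral form of the ODE, and obtain $\de_sX(t,s,y)=-D_xX(t,s,y)\,b(s,y)$ by differentiating the semigroup identity~\eqref{eq5ec5219e}, so that $\de_sX\in \Lbs^p(\Dom_b;\R^n)$ follows from the bound on $D_xX$. The only difference is that you spell out a smoothing justification (via Theorem~\ref{thm:MS} and Lemma~\ref{lem5e996a0e}) of the chain-rule step, which the paper carries out more formally; this is extra care, not a divergence in method.
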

\begin{proof}
	We know that $X$ is continuous and, from~\eqref{eq63eb5e33}, we have that $D_x X\in \Lbs^p(\Dom_b;\R^{n^2})$.
	Moreover, by the identity $X(t,s,x)=x+\int_s^t b(v,X(v,s,x))\dd v$, we have
	\[
	\de_t X(t,s,x) = b(t,X(t,s,x)), 
	\]
	and thus $\de_t X$ is continuous. Finally,  by differentiating with respect to $s$ the semigroup identity~\eqref{eq5ec5219e} in the form
	$X(t,v,x)=X(t,s,X(s,v,x))$, we get
	\[
	\de_sX(t,s,X(s,v,x))+D_xX(t,s,X(s,v,x))b(s,X(s,v,x))=0,
	\]
	for all $t,s,v\in\R$ and $x\in\R^n$ for which the expression make sense.
	Therefore, $\de_sX(t,s,y) = - D_xX(t,s,y)b(s,y)$ and so $\de_sX\in\Lbs^p(\Dom_b;\R^n)$.
\end{proof}

\begin{remark}
Corollary~\ref{cor5ec2a4b4} improves \cite[Theorem 4]{MR3912727} and \cite[Corollary 1.8]{MR4263701}, by dropping assumption~\eqref{divbbd},
that is $\div_x b\in \Lbs^1_\loc(\R ,\Lbs^\infty(\R^n ))$.
Notice that, if $n=1$, assumption~\eqref{divbbd} reduces to the classical Lipschitz condition of $b$ with respect to $x$, uniformly in $t$.  
Corollary~\ref{cor5ec2a4b4}  also applies  to a non Lipschitz one-dimensional vector field $b$ (see Example~\ref{sec5ec2a1ee} below).
\end{remark}

\begin{remark}\label{rem5ed2a008}
Corollary~\ref{cor5ec2a4b4} looks almost sharp. 
Indeed, it was proved in \cite{MR3437603} that no Sobolev regularity can be expected for the flow, when assuming only that $b\in\Lbs^1(I;W^{1,p}(\R^n;\R^n))$ for all finite $p\in [1,+\infty)$, even when $b$ is compactly supported and divergence-free. 
See also Remark~\ref{rem5ede0b6b}.
\end{remark}

\section{Applications to PDEs}\label{sec5edff3f5}
In this section we apply the Sobolev regularity of flows for getting the representation of  weak solutions of the Cauchy problems both for the transport and continuity equations.

\begin{proof}[Proof of Theorem~\ref{solTE}]
By \cite[Theorem 1]{MR3458193}, we can infer the uniqueness of weak solutions $u\in L^\infty((0,T);L^\infty(\R^n))$  for~\eqref{CPTE}, provided that the spatial derivative of $b$ satisfies a sub-exponential summability and $\bar u\in L^\infty(\R^n)$.
Therefore we have only to show that the function $v$ in~\eqref{represcharTE} is a weak solution of~\eqref{CPTE}, that is, 
\begin{equation}\label{vws}
\int_0^T\int_{\R^n}v\,\left(\partial_t\varphi+\div(b\,\varphi)\,\right)dtdx=\,-\int_{\R^n}\bar u\,\varphi(0,\cdot)\dd x
\end{equation}
for each $\varphi\in C^\infty_c([0,T)\times\R^n)$.
We divide the proof in two steps.

{\bf 1st step.} 
Let us assume that $\bar u\in  C^\infty(\R^n)\cap L^\infty(\R^n)$. 
Let $b_\epsilon: I\times\R^n\to \R^n$ be the family of vector fields defined as
\[
b_\epsilon(t,x):=\,(\tilde b_\epsilon(t,\cdot)*\rho_\epsilon)(x)\text{ if }(t,x)\in I\times\R^n\,,
\] 
where
\[
 \tilde b_{\epsilon}(t,x):=\,(\tilde b_{\epsilon,1}(t,x),\dots,\tilde b_{\epsilon,n}(t,x))\,,
 \]
\[
\tilde b_{\epsilon,i}(t,x):=\,\max\{\{\min\{b_i(t,x),1/\epsilon\},-1/\epsilon\}\text{ if }(t,x)\in I\times\R^n,\,i=1,\dots,n\,,
\]
and $(\rho_\epsilon)_\epsilon$ denotes a family of mollifiers depending on the space variable $x$. Then,
standard properties of convolutions yield
\begin{equation}\label{bepsbd}
|b_\epsilon(t,x)|\le\,\frac{\sqrt n}{\epsilon}\text{ for each }(t,x)\in I\times\R^n\,,
\end{equation}
\begin{equation}\label{bepsreg}
b_\epsilon(t,\cdot)\in C^\infty(\R^n;\R^n)\text{ and }D_xb_\epsilon(t,x)=
\,\int_{\R^n}D_x\rho_\epsilon(x-y)\,\tilde b_\epsilon(t,y)\,dy\quad\forall (t,x)\in I\times\R^n\,,
\end{equation}
\begin{equation}\label{bepswp}
\text{ $b_\epsilon(t,\cdot):\R^n\to\R^n$ is Lipschitz continuous, uniformly with respect to $t\in I$},
\end{equation}
\begin{equation}\label{bepstobunif}
b_\epsilon\to b\quad\text{in $L^1(I;L^1(\Omega;\R^n))$ as $\epsilon\to 0$, whenever $\Omega\Subset\R^n$}\,.
\end{equation}

Then, from \eqref{bepswp} we get that $b_\epsilon $ is well-posed and the flow maps  $X_\epsilon$ associated  to $b_\epsilon$ is globally defined, that is, $X_\epsilon:\, \overline I\times\R^n\to\R^n$ and is Lipschitz regular.
Define
\[
v_\epsilon(t,x) = \bar u(X_\epsilon(0,t,x)) .
\]

By \eqref{bepstobunif},  Lemma~\ref{lem5e996a0e} and the subsequent remark, $X_\epsilon\to X$ uniformly on compact subsets 
of $\overline I\times\R^n$.
Since $\bar u$ is assumed to be continuous, we obtain that $v_\epsilon\to v$ uniformly on compact sets of $\overline I\times\R^n$. Moreover, since  $\bar u\in L^\infty(\R^n)$,  by the dominated convergence theorem, we can also assume that $v_\epsilon\to v$ in $L^2$ on each compact set of $\overline I\times\R^n$.

By the classical Cauchy-Lipschitz theory (see, for instance, \cite[Section 2]{MR3283066}), it is well-known that $v_\epsilon$ 
is a classical solution to~\eqref{CPTE} with $b_\epsilon$ in place of $b$ and, in particular, a weak solution, i.e., 
\[
\int_0^{T}\int_{\R^n}
	v_\epsilon \left(\partial_t\varphi+\div(b_\epsilon\varphi)\right)
	\dd x\dd t
	= - \int_{\R^n} \bar u(x) \varphi(0,x) \dd x \,,
\]
for each $\varphi\in C^\infty_c([0,T)\times\R^n)$. Since $\varphi$ has compact support in $[0,T)\times\R^n$, it is easy to check that
$\div(b_\epsilon\varphi)\to\div(b\varphi)$ in $L^2$. Hence
\[
\lim_{\epsilon\to0^+} 
	\int_0^T\int_{\R^n}
	v_\epsilon \left(\partial_t\varphi+\div(b_\epsilon\varphi)\right)
	\dd x\dd t
= \int_0^{T}\int_{\R^n}
	v \left(\partial_t\varphi+\div(b\varphi)\right)
	\dd x\dd t ,
\]
that is, $v$ is a weak solution to~\eqref{CPTE}. 

{\bf 2nd step.} 
Let $\bar u\in \Lbs^\infty(\R^n)$ and, by mollification in $\R^n$, let $\bar u_j\in\Lbs^\infty(\R^n)\cap C^\infty(\R^n)$, with $j\in\N$, be a sequence of functions that converges to $\bar u$ almost everywhere and such that $\|\bar u_j\|_{\Lbs^\infty}\le\|\bar u\|_{\Lbs^\infty}$ for every $j$.

Define
\[
v_j(t,x) = \bar u_j(X(0,t,x)) .
\]
By Theorem~\ref{thm5eb900db}, for every $t$ the homeomorphism $X(0,t,\cdot)$ satisfies the Lusin (N) condition, 
and therefore $v_j(t,\cdot)\to v(t,\cdot)$ almost everywhere in $\R^n$.
Since this is true for every $t$, we get that $v_j\to v$ almost everywhere in $[0,T]\times\R^n$.
Moreover, it is clear that $\|v_j\|_{\Lbs^\infty}\le\|\bar u\|_{\Lbs^\infty}$ for all $j$.

By the previous step, we have, for all $j$,
\[
\int_0^{T}\int_{\R^n}
	v_j \left(\partial_t\varphi+\div(b\varphi)\right)
	\dd x\dd t
	= - \int_{\R^n} \bar u_j(x) \varphi(0,x) \dd x \,,
\]
for each $\varphi\in C^\infty_c([0,T)\times\R^n)$. We can now apply the Dominated Convergence Theorem and pass to the limit $j\to\infty$ to obtain that $v$ is a weak solution to~\eqref{CPTE}.
\end{proof}

\begin{corollary}[Sobolev regularity of the solutions of the transport equation]\label{cor5ec7d875}
Let $b$ $\in C^0(I\times\R^{n};\R^n)$ as in Theorem~\ref{thmbcpcsupp} 
and let $X$ be the flow of $b$. Let $p>2n$ and $1\le\tilde q\le q<\infty$ be such that
\begin{equation}\label{eq5fce4500}
\frac{\tilde q}{q-\tilde q} = \left( \frac{q}{p} + \frac{n}{p-n} \right)^{-1}  
,\quad\text{i.e.,}\quad
\tilde q = \frac{ pq(p-n) }{ q(p-n)+p^2 }\,.
\end{equation}
If $\bar u\in \Lbs^\infty(\R^n)\cap W^{1,q}_\loc(\R^n)$,
then the function $v$ in~\eqref{represcharTE} satisfies 
\[
v\in L^\infty([0,T];W^{1,\tilde q}_\loc(\R^n))\, .
\]
\end{corollary}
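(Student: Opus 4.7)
The plan is to interpret $v(t,\cdot)=\bar u\circ X(0,t,\cdot)$ as the image of $\bar u$ under the composition operator $T_{\Psi_t}$ with $\Psi_t:=X(0,t,\cdot)$, and to invoke the regularity-of-composition result in Proposition~\ref{prop5ec7f3d8}. The relationship~\eqref{eq5fce4500} between $p$, $q$, and $\tilde q$ is precisely tailored so that the integrability exponent produced by Lemma~\ref{lem5ec7d831} for the $q$-distortion function $K^{\Psi_t}_q$ matches the exponent $\tilde q/(q-\tilde q)$ required by Proposition~\ref{prop5ec7f3d8}.

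Fix $t\in[0,T]$ and set $\Psi_t:=X(0,t,\cdot)\colon\R^n\to\R^n$, which is a homeomorphism with inverse $\Psi_t^{-1}=X(t,0,\cdot)$. Since $\spt(b(t,\cdot))\Subset\Omega$, the map $\Psi_t$ coincides with the identity on a neighborhood of $\R^n\setminus\Omega$, and the distortion function $K^{\Psi_t}_q$ equals $1$ there. Inside $\Omega$, Theorem~\ref{thmbcpcsupp} gives $\Psi_t,\Psi_t^{-1}\in W^{1,p}(\Omega;\R^n)$ with an $L^p$-bound on the spatial derivatives that is independent of $t$, since the right-hand side of~\eqref{eq5e9998eb} does not depend on $s,t$. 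Corollary~\ref{cor5edfe500} yields $J_{\Psi_t}>0$ a.e., so Lemma~\ref{lem5ec7e29c} and Lemma~\ref{lem5ec7d831} produce $K^{\Psi_t}_q\in L^r_\loc(\R^n)$ with $r=(q/p+n/(p-n))^{-1}=\tilde q/(q-\tilde q)$, where the last equality is exactly~\eqref{eq5fce4500}. Proposition~\ref{prop5ec7f3d8}, applied with the exponents $(q,\tilde q)$ playing the roles of $(q,p)$ there, then gives $v(t,\cdot)=T_{\Psi_t}\bar u\in W^{1,\tilde q}_\loc(\R^n)$ for each $t\in[0,T]$.

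The remaining issue, and the main technical point of the argument, is to upgrade this pointwise-in-$t$ statement to $v\in L^\infty([0,T];W^{1,\tilde q}_\loc(\R^n))$. By Proposition~\ref{prop5ec7f3d8}, the operator norm of $T_{\Psi_t}$ from $W^{1,q}(\Psi_t(B))$ to $W^{1,\tilde q}(B)$ on a ball $B$ is controlled by $\|K^{\Psi_t}_q\|_{L^{\tilde q/(q-\tilde q)}(B)}^{1/q}$. Tracing through the Hölder estimate~\eqref{eq5ea99bce} in the proof of Lemma~\ref{lem5ec7d831}, this norm is dominated by fixed powers of $\|D\Psi_t\|_{L^p(B)}$ and $\|D\Psi_t^{-1}\|_{L^p(\Psi_t(B))}$, both of which are controlled uniformly in $t\in[0,T]$ by the $t$-independent right-hand side of~\eqref{eq5e9998eb}; outside $\Omega$ the distortion is identically $1$, so no additional bound is needed. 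Combining this uniform control with $\bar u\in W^{1,q}_\loc(\R^n)$ yields a $W^{1,\tilde q}(B)$-bound on $v(t,\cdot)$ uniform in $t\in[0,T]$, for every ball $B\subset\R^n$, which is precisely the claim $v\in L^\infty([0,T];W^{1,\tilde q}_\loc(\R^n))$.
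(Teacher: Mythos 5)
Your argument is correct and is essentially the paper's own proof: both view $v(t,\cdot)$ as $T_{\Psi_t}\bar u$ with $\Psi_t=X(0,t,\cdot)$, use Theorem~\ref{thmbcpcsupp} and Corollary~\ref{cor5edfe500} to see that $\Psi_t$ is a $W^{1,p}$ homeomorphism of finite distortion with $K^{\Psi_t}_q\in \Lbs^{\tilde q/(q-\tilde q)}_\loc$ via Lemma~\ref{lem5ec7d831}, and then conclude with Proposition~\ref{prop5ec7f3d8}. Your additional step making the $t$-uniformity of the bounds explicit (via the $t$-independent right-hand side of~\eqref{eq5e9998eb} and the operator-norm control in Proposition~\ref{prop5ec7f3d8}) is a welcome clarification of a point the paper leaves implicit, not a deviation in method.
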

\begin{proof}
By Theorem~\ref{thmbcpcsupp}, the function $\Psi:=X(0,t,\cdot)$ and its inverse are in $W^{1,p}_\loc$ with $p>2n$.
By Lemma~\ref{lem5ec7d831} and Corollary~\ref{cor5edfe500}, $\Psi$ is of finite distortion and $K^\Psi_q\in\Lbs^r_\loc$ for $r=\left( \frac{q}{p} + \frac{n}{p-n} \right)^{-1}$.
By Proposition~\ref{prop5ec7f3d8}, the composition operator $T_\Psi$ is continuous 
from $W_\loc^{1,q}(\Omega_2)$ to $W_\loc^{1,\tilde q}(\Omega_1)$.
Since $v(t,\cdot)=T_\Psi(\bar u)$, the proof is concluded.
\end{proof}

\begin{remark}\label{rem5edfebb7}
Notice that the propagation of regularity, in the spirit of Corollary~\ref{cor5ec7d875}, may fail below the exponential summability of $D_xb$, even though $\bar u\in C^\infty_c(\R^n)$. 
Indeed,  in \cite[Theorem~2.1]{MR4263701}, the authors constructed 
a divergence-free vector field $b:\R \times\R^n\to\R^n$ ($n\ge\,2$)
satisfying the subexponential summability condition~\eqref{eq5eb95263},
and a weak bounded, compactly supported solution $u(t,x)$ of~\eqref{CPTE} such that 
$\bar u:=\,u(0,\cdot)\in C^\infty_c(\R^n)$
but $u(t,\cdot)\notin \dot{W}^{s,p}(\R^n)$ for all $t>0$, $s>0$ and $p\ge1$,
where $\dot{W}^{s,p}(\R^n)$ denotes the so-called {\it homogeneous Sobolev space}.
The example is based on the work \cite{MR3933614}.  
Let us recall that when $s=1$  and $1<\,p<\,\infty$, then $\dot{W}^{1,p}(\R^n)\cap L^p(\R^n)$  coincides with the classical Sobolev space ${W}^{1,p}(\R^n)$ (see \cite[Section 2]{MR3933614}). 

Let us point out that, although $b$ satisfies the hypothesis of Theorem~\ref{solTE},
$u(t,\cdot)\notin {W}^{1,p}(\R^n)$ for each $t\in (0,\infty)$ and $p\in (1,\infty)$.
This implies that the flow of $b$ has not Sobolev regularity $W^{1,p}$ for some $p>n$, otherwise the same proof of Corollary~\ref{cor5ec7d875} could be repeated.
Our Example~\ref{sec5eb962e5} shows the same phenomenon.
Notice that we don't know whether $u(t,\cdot)\notin {W}^{1,1}(\R^n)$.
\end{remark}

\begin{remark}
	Corollary~\ref{cor5ec7d875} shows that the Sobolev regularity of the flow $X$ implies Sobolev regularity for solutions of the transport equation in the form~\eqref{represcharTE}.
	We notice that the converse implication is almost true.
	Indeed, suppose that for every $1\le\tilde q\le q<\infty$ satisfying~\eqref{eq5fce4500},
	and for every $\bar u\in \Lbs^\infty(\R^n)\cap W^{1,q}_\loc(\R^n)$,
	the function $v$ in~\eqref{represcharTE} satisfies 
	$v\in L^\infty((-T,T);W^{1,\tilde q}_\loc(\R^n))$.
	Then, if we take $\bar u = \phi x_j$, where $\phi\in C^\infty_c(\R^n)$ and $x_j$ a the $j$-th coordinate function,
	we see that $x_j(X(0,t,\cdot))\in W^{1,\tilde q}_\loc(\R^n)$ for almost every $t$ and for all $\tilde q<p$.
\end{remark}

Before proving Theorem~\ref{solCE}, let us introduce some notation and recall some existence and uniqueness results as concern as the Cauchy problem \eqref{CPCE} for the continuity equation.  
It is well-known that the non trivial issue in the well-posedness turns out  to be  the uniqueness of weak  solutions.  Nonnegative measure-valued solutions of the continuity equation are uniquely determined by their initial condition if the characteristic ODE associated to the vector field $b$ has a unique solution (see \cite[Theorem 9]{MR3283066}). A partial extension  of this result to signed measures was given in \cite{MR2439520}, under a quantitative two-sided Osgood condition on  $b$. For the proof of Theorem~\ref{solCE} we will use the following more general result.
\begin{theorem}[{\cite[Theorem 3]{MR3906270}}] \label{wpCE}Let $b:\,(0,T)\times\R^n\to\R^n$ be such that the following two conditions hold: There exists a continuous and nondecreasing function $G:\,[0,\infty)\to  [0,\infty)$ satisfying $\int_r^\infty\frac{ds}{G(s)}ds=\infty$ for some $r>\,0$ such that
\begin{equation}\label{B1}
\sup_{x\in \R^n}\frac{|b(t,x)|}{G(|x|)}\in L^\infty(0,T)\\,
\end{equation}
and there exists a continuous and nondecreasing function $\omega :\,[0,\infty) \to [0,\infty)$  satisfying $\int_0^r\frac{ds}{\omega(s)}ds=\infty$ for some $r>\,0$ such that
\begin{equation}\label{B2}
\sup_{x,\,y\in B(0,R),\,x\neq y}\frac{|b(t,x)-b(t,y)|}{\omega(|x-y|)}\in L^\infty(0,T)\,,
\end{equation}
for any radius $R>\,0$. Then for any $\rho_0\in\mathcal M(\R^n)$ there exists a unique solution $\rho$ in $ L^1((0,T);\mathcal M(\R^n))$ for the Cauchy problem \eqref{CPCE}, that is,
\begin{equation}\label{defweaksolCPCEmeas}
\int_0^{T}\int_{\R^n}\left(\partial_t\varphi(t,x)+\langle b(t,x),D_x\varphi(t,x)\rangle\right)\,d\rho_t(x)dt=\,-\int_{\R^n}\varphi(0,x)\,d\rho_0(x)
\end{equation}
for each $\varphi\in C^\infty_c([0,T)\times\R^n)$, with $\rho_t:=\,\rho(t,\cdot)$. Moreover  this  solution representable as 
$\rho_t=\,X(t,0,\cdot)_\#\rho_0$, where $X(t,s,\cdot)$ denotes the flow of the vector field $b$.
\end{theorem}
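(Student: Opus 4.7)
The plan is to split the result into three layers: global well-posedness of the classical flow, verification that the push-forward measure defines a weak solution, and uniqueness among signed measure-valued weak solutions.

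\emph{Step 1 (Global flow).} Condition (B2) is exactly an Osgood modulus of continuity in space, uniform in $t$; coupled with the local $L^\infty$ bound on $b$ inherited from (B1), Proposition~\ref{prop628f93c2} provides local well-posedness and a continuous flow $X$ on its domain of existence. To extend $X$ to all of $[0,T]\times[0,T]\times\R^n$ I would run a Bihari--LaSalle argument: any maximal curve $\gamma(t) = X(t,0,x)$ satisfies $|\gamma(t)| \le |x| + \int_0^t M(r)\, G(|\gamma(r)|)\,dr$ where $M(r) := \sup_{x\in\R^n} |b(r,x)|/G(|x|)$ belongs to $L^\infty(0,T)$ by (B1), and the divergence $\int_r^\infty ds/G(s) = \infty$ rules out blow-up on $[0,T]$.

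\emph{Step 2 (Existence via push-forward).} Set $\rho_t := X(t,0,\cdot)_\#\rho_0$. Continuity of $X$ makes $t\mapsto\rho_t$ weakly continuous with $|\rho_t|(\R^n) \le |\rho_0|(\R^n)$, so $\rho\in L^1((0,T);\mathcal{M}(\R^n))$. For $\varphi\in C^\infty_c([0,T)\times\R^n)$, the identity $\frac{d}{dt}\varphi(t,X(t,0,x)) = (\partial_t\varphi + b\cdot D_x\varphi)(t,X(t,0,x))$, valid for a.e.\ $t$ since $t\mapsto X(t,0,x)$ is absolutely continuous with derivative $b(t,X(t,0,x))$, gives the weak formulation~\eqref{defweaksolCPCEmeas} after a change of variables and integration in $t$.

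\emph{Step 3 (Uniqueness).} This is where the real work sits and I expect it to be the main obstacle. Let $\rho$ be a weak solution with $\rho_0 = 0$; the goal is to prove $\rho_s = 0$ for a.e.\ $s\in(0,T)$. The natural idea is duality with the backward transport equation: for $\psi\in C^\infty_c(\R^n)$ and $s\in(0,T)$, the ``correct'' test function is $u(t,x) := \eta(t)\,\psi(X(s,t,x))$ with $\eta\in C^\infty_c([0,T))$ concentrating on $[0,s]$. Since (B2) gives only an Osgood modulus, $u$ is not $C^1$ in $x$ and cannot be used directly. I would therefore regularize: let $b^\epsilon$ be a spatial mollification of $b$, Lipschitz in $x$ uniformly in $t$ on compact subsets, let $X^\epsilon$ denote its flow, and note $X^\epsilon \to X$ locally uniformly by Lemma~\ref{lem5e996a0e}. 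Using $u^\epsilon(t,x) := \eta(t)\,\psi(X^\epsilon(s,t,x))$ as an admissible Lipschitz test function and rewriting $\partial_t u^\epsilon + b\cdot D_x u^\epsilon = \eta'(t)\,\psi\circ X^\epsilon(s,t,\cdot) + \eta(t)(b-b^\epsilon)\cdot D_x[\psi\circ X^\epsilon(s,t,\cdot)]$ leads to
\begin{equation*}
\int_0^T \int_{\R^n} \eta'(t)\,\psi(X^\epsilon(s,t,x))\,d\rho_t(x)\,dt = -\int_0^T \int_{\R^n}\eta(t)(b-b^\epsilon)\cdot D_x[\psi\circ X^\epsilon(s,t,\cdot)]\,d\rho_t\,dt.
\end{equation*}
The left-hand side tends to $-\int_{\R^n}\psi\,d\rho_s$ as $\epsilon\to 0$ and $\eta\uparrow \chi_{[0,s]}$. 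The main technical obstacle is to show the commutator on the right vanishes in the limit: the crude Gronwall bound $\|D_x X^\epsilon(s,t,\cdot)\|\le\exp\int_s^t \|D_x b^\epsilon(r,\cdot)\|_\infty\,dr$ from Lemma~\ref{lem04031001} is not uniform in $\epsilon$, so one must instead exploit the two-sided Osgood bound (B2), inherited by $b^\epsilon$ uniformly in $\epsilon$, to produce a finer (sub-exponential in the modulus $\omega$) control on $D_x[\psi\circ X^\epsilon]$ and then drive the commutator to zero using the total-variation bound on $\rho_t$. This Osgood-to-commutator passage, unneeded under global Lipschitz regularity, is the heart of the argument in~\cite{MR3906270}; with it, $\int\psi\,d\rho_s=0$ for every $\psi\in C^\infty_c(\R^n)$, so $\rho_s\equiv 0$ for a.e.\ $s$, closing the uniqueness step and yielding the representation $\rho_t=X(t,0,\cdot)_\#\rho_0$ from Step~2.
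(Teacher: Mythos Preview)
The paper does not prove this statement: Theorem~\ref{wpCE} is quoted verbatim from \cite[Theorem~3]{MR3906270} and used as a black box in the proof of Theorem~\ref{solCE}. There is therefore no ``paper's own proof'' to compare against.

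That said, your outline is a faithful sketch of the standard strategy for such results, and you correctly isolate the genuine difficulty in Step~3. A couple of remarks. First, in Step~3 your test function $u^\epsilon$ is Lipschitz but not $C^\infty$, so strictly speaking it is not admissible in the weak formulation~\eqref{defweaksolCPCEmeas}; a further mollification layer (or a density argument showing Lipschitz-in-$x$, $W^{1,1}$-in-$t$ test functions suffice) is needed. Second, your phrasing ``two-sided Osgood bound'' and ``sub-exponential control on $D_x[\psi\circ X^\epsilon]$'' is exactly the right intuition but hides the actual mechanism: the argument in \cite{MR3906270} does not estimate $D_xX^\epsilon$ pointwise, but rather controls the modulus of continuity of $x\mapsto X^\epsilon(s,t,x)$ uniformly in $\epsilon$ via the Osgood condition, and then uses this together with the uniform total-variation bound on $\rho_t$ to show the commutator term vanishes. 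As you yourself flag, turning this into a rigorous limit is the entire content of the cited theorem, and your proposal is more an identification of the obstacle than a resolution of it.
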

\begin{remark}\label{rmkclop} As pointed out in \cite[p. 49]{MR3906270}, by assuming $\rho\in L^\infty((0,T);\mathcal M(\R^n))$, the uniqueness still holds if one replaces $L^\infty(0,T)$  by $L^1(0,T)$ in both \eqref{B1} and \eqref{B2}.
\end{remark}
We will also use  the following abstract area-type formula, which can be proved in a standard way.
\begin{lemma}\label{represpfmeas} Let $\Psi:\,\R^n\to\R^n$, $\mu:\mathcal M_n\to [0,\infty]$ and $\bar\rho:\,\R^n\to [-\infty,\infty]$  be a homeomorphism, a Radon measure on $\R^n$ and a function in $ L^1(\R^n,\mu)$, respectively. Suppose that $\Psi_\#\mathcal\mu$  is absolutely continuous with respect to $\mu$ and denote
\[
w:=\frac{d\Psi_\#\mu}{d\mu}\in L_{\rm loc}^1(\R^n,\mu)\,.
\]

Then
\[
\Psi_\#(\bar\rho\mu)=\,\bar\rho(\Psi^{-1})\,w\,\mu\,.
\]
In particular it also follows that
\[
\bar\rho(\Psi^{-1})\,w\in L^1(\R^n,\mu)\,.
\]
\end{lemma}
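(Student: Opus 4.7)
The strategy is to verify the asserted identity on an arbitrary Borel set $E\subset\R^n$, and then invoke uniqueness of measures. Since $\Psi$ is a homeomorphism, $\Psi^{-1}$ is Borel, so $\bar\rho\circ\Psi^{-1}$ is well-defined as a $\mu$-measurable function; since $\bar\rho\in L^1(\R^n,\mu)$, the measure $\bar\rho\mu$ is a finite signed Radon measure, and its push-forward $\Psi_\#(\bar\rho\mu)$ through the homeomorphism $\Psi$ is well-defined as well.

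First, I would unfold the definition of push-forward: for every Borel set $E\subset\R^n$,
\[
\Psi_\#(\bar\rho\mu)(E)=(\bar\rho\mu)(\Psi^{-1}(E))=\int_{\Psi^{-1}(E)}\bar\rho\,\dd\mu.
\]
Next, I would apply the standard transfer identity $\int g\circ\Psi\,\dd\mu=\int g\,\dd(\Psi_\#\mu)$ (valid whenever the right-hand side is absolutely convergent) with the Borel test function $g=\chi_E\cdot(\bar\rho\circ\Psi^{-1})$. Since $\chi_E\circ\Psi=\chi_{\Psi^{-1}(E)}$ and $(\bar\rho\circ\Psi^{-1})\circ\Psi=\bar\rho$, this yields
\[
\int_{\Psi^{-1}(E)}\bar\rho\,\dd\mu=\int_{E}\bar\rho\circ\Psi^{-1}\,\dd(\Psi_\#\mu).
\]
Finally, using the assumed Radon--Nikodym identity $\dd(\Psi_\#\mu)=w\,\dd\mu$, I rewrite the last integral as $\int_E (\bar\rho\circ\Psi^{-1})\,w\,\dd\mu$. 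Putting these three equalities together gives
\[
\Psi_\#(\bar\rho\mu)(E)=\int_E (\bar\rho\circ\Psi^{-1})\,w\,\dd\mu\qquad\text{for every Borel set $E$,}
\]
which is exactly the claimed identity $\Psi_\#(\bar\rho\mu)=\bar\rho(\Psi^{-1})\,w\,\mu$ as signed measures.

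The integrability statement $\bar\rho(\Psi^{-1})\,w\in L^1(\R^n,\mu)$ is then automatic: the total variation of the signed measure on the left-hand side equals $\|\bar\rho\mu\|_{\mathrm{TV}}=\int|\bar\rho|\,\dd\mu<\infty$ (since $\Psi$ is a Borel bijection), which equals $\int|\bar\rho\circ\Psi^{-1}|\,w\,\dd\mu$ by applying the identity just proved to $|\bar\rho|$ in place of $\bar\rho$. The only subtle point is a measure-theoretic bookkeeping issue, namely justifying the transfer identity and splitting into positive and negative parts of $\bar\rho$; this is handled in the usual way by first checking simple functions, extending by monotone convergence to nonnegative $\bar\rho$, and then writing $\bar\rho=\bar\rho^+-\bar\rho^-$, using absolute convergence to recombine. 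I do not anticipate any genuine obstacle here, as the statement is a direct consequence of the defining property of the push-forward measure combined with the Radon--Nikodym hypothesis.
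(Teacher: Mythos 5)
Your proof is correct: the paper itself gives no proof of this lemma (it is stated as something that ``can be proved in a standard way''), and your argument is exactly that standard route --- unfold the push-forward of $\bar\rho\mu$ on a Borel set, apply the transfer identity $\int g\,\dd(\Psi_\#\mu)=\int g\circ\Psi\,\dd\mu$, and substitute $\dd(\Psi_\#\mu)=w\,\dd\mu$, with the $L^1$ claim following by running the identity for $|\bar\rho|$ and using injectivity of $\Psi$. The only detail worth making explicit in the bookkeeping you already flag is to replace $\bar\rho$ by a Borel representative (so that $\bar\rho\circ\Psi^{-1}$ is genuinely measurable), noting that the exceptional set $N$ satisfies $\Psi_\#\mu(\Psi(N))=\mu(N)=0$, hence $w=0$ $\mu$-a.e.\ on $\Psi(N)$ and the conclusion is unaffected.
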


\begin{proof}[Proof of Theorem~\ref{solCE}] (i)  In view of the use of Remark~\ref{rmkclop}, 
let us begin to show that \eqref{B1} and \eqref{B2} hold
with $L^1((0,T))$ in place of $L^\infty((0,T))$. It is immediate that, by choosing
\[
G(s):=\,s\,\log^+s\text{ if }s\ge\,0\,,
\]
\eqref{B1}  in the weaker form follows from \eqref{globexsistb}. As for \eqref{B2}, observe that, from \eqref{eq5eb95263vecchia}, it follows that
\begin{equation}\label{estimB2}
\int_0^T\int_{B(0,R)}\exp\left( \frac{ \|D_xb\| }{ \log^+\|D_xb\| } \right)\,dxdt<+\infty\text{ for each }R>\,0\,.
\end{equation}
Let $\theta:[0,\infty)\to [0,\infty)$ be the function defined as
\[
\theta(s):=
\begin{cases}\exp\displaystyle{\left(\frac{\bar s}{\log \bar s}\right)}&\text{ if }0\le\,s\le\,\bar s\,;\\
\exp\displaystyle{\left( \frac{s }{ \log s } \right)}&\text{ if }s\ge\,\bar s\,.
\end{cases}
\]
Then, by Proposition \ref{propFEL},  $\theta$ satisfies the assumption of Theorem \ref{thm5e999915}, for $\bar s$ large enough. In particular, we can deduce, as in the proof of Theorem \ref{thm5e999915}, the estimate
\[
\sup_{x,\,y\in B(0,R),\,x\neq y}\frac{|b(t,x)-b(t,y)|}{\omega(|x-y|)}\le\,\varphi(t)
\]
 where $\omega$ and $\varphi$ are the modulus of continuity defined in  \eqref{modcontomega} and the function in \eqref{functmodcontomega}, respectively. From \eqref{estimB2},  it follows that $\varphi\in L^1(0,T)$. Thus \eqref{B2} in the weaker form follows, too. Hence, the uniqueness of weak solutions for \eqref{CPCE} in  $L^\infty((0,T);\mathcal M(\R^n))$ is granted by Theorem~\ref{wpCE} and Remark~\ref{rmkclop} .

The existence  of weak solutions for \eqref{CPCE} can be proved as in \cite{MR2439520,MR3906270}. More precisely, if $X:\,[0,T]\times[0,T]\times\R^n\to\R^n$ denotes the flow associated to $b$  and $\rho$ is defined as in  \eqref{solCPCEmeas}, then   one can verbatim repeat the same proof, by showing that $\rho$   is a weak solution of \eqref{CPCE} in  $L^\infty((0,T);\mathcal M(\R^n))$, that is, \eqref{defweaksolCPCEmeas} holds.

If now $\bar \rho\in L^1(\R^n)$, we can apply the previous result with initial value measure $\bar\rho\,\mathcal L^n$ and we get that the unique weak solution for \eqref{CPCE} is given by
\[
\rho_t=\,X(t,0,\cdot)_\#(\bar\rho\,\mathcal L^n)\text{ for each }t\in [0,T]\,.
\]
From Theorem~\ref{thm5eb900db}, $\rho_t=\,X(t,0,\cdot)_\#\mathcal L^n$ is absolutely continuous with respect to  $\mathcal L^n$. Therefore, by Lemma \ref{represpfmeas} with $\Psi=\,X(t,0,\cdot)$ and $\mu=\,\mathcal L^n$, \eqref{eq6033caa8} follows.

\noindent (ii) Observe that  in this case the assumptions of Theorem~\ref{thm5eb900db} are still satisfied. Thus, by applying the previous claim (i), it follows that \eqref{eq6033caa8} holds. We have only to prove that
\begin{equation}\label{w=jacob}
J_{X,t}=\,J_{X(0,t,\cdot)}\text{ a.e.~in }\R^n\text{, for each }t\in [0,T]\,
\end{equation}
in order to conclude the proof.
From Theorem \ref{thmbcpcsupp}, the flow $X(t,s,\cdot)\in W^{1,p}_{\rm loc}(\R^n;\R^n)$ with $p>\,n$, for each $t,\,s\in [0,T]$. In particular, from Corollary~\ref{cor5ec7b067},  we get that
\[
\,X(t,0,\cdot)_\#(\mathcal L^n)=\,J_{X(0,t,\cdot)}\mathcal L^n\,.
\]
Thus \eqref{w=jacob} follows.
\end{proof}

\section{Examples}\label{sec5edff41e}
Most of the examples we provide are in low dimension.
However, notice that we can always extend the examples to higher dimensions as follows.
If $b(t,x)$ is a vector field on $\R^n$ and $m>0$, then define the vector field $h(t,(x,y)):=(b(t,x),0)$ on $\R^n\times\R^m$.
If $X(t,s,x)$ is the flow of $b$, then 
$Z(t,s,(x,y)) := (X(t,s,x),y)$ is the flow of $h$.
Notice that $Z(t,s)\in W^{1,p}_\loc(\R^n\times\R^m)$ if and only if $X(t,s)\in W^{1,p}_\loc(\R^n)$.

\subsection{Well-posedness does not imply absolute continuity of the flow}\label{sec5ec2a20e}
Here we show that
{\it there is a vector field $b\in C^0(\R\times\R)$ that is well-posed, but whose flow on $\R$ is not absolutely continuous.}
This shows that Theorems~\ref{thm5eb900db} and~\ref{thm5ea9862b} are not a direct consequence only of the well-posedness.

A first example is given as follows.
It is well-known that a quasisymmetric homeomorphism $\Psi:\R\to\R$ need not be absolutely continuous (see, for instance, \cite[p. 107]{MR1800917}). 
Moreover, by \cite[p. 250]{MR409804}, for each quasisymmetric homeomorphism $\Psi:\R\to\R$ there is a well -posed continuous vector field $b:\R^2\to\R$ 
such that, if $X(t,x)$ denote the (unique) flow $X(t,0,x)$ associated to $b$, then, for some $t>0$, $X(t,\cdot)=\Psi$.
We conclude that there exists a well-posed vector field $b$ 
whose flow is not absolutely continuous at some time.

We provide a second more explicit example from~\cite[Section 8]{MR3984100}.
(Be aware that what is denoted by $t$ in~\cite{MR3984100} is actually the spatial variable.)
In the plane $\R\times\R$ with coordinates $(t,x)$,
consider the family of parabolas of the form $x=a(s)t^2 + s$, where $a:\R\to\R$ is $0$ for $s\le 0$, $1$ for $s\ge 1$ and coincides with the Cantor Staircase Function on $[0,1]$.
These parabolas foliate the plane and are the integral curves of a vector field of the form 
 $(1,b(t,x))$.
One can show that $b\in W^{1,2}_\loc(\R)\cap C^0(\R)\cap {\rm Lip}_\loc(\R\setminus\{0\}))$ and that $(1,b)$ is a well-posed non-autonomous vector field on $\R^2$.\footnote{The uniqueness of integral curves is clear, because $b$ is locally Lipschitz outside the axis $\R\times\{0\}$ and its unique integral curves are then the parabolas  $x=a(s)t^2 + s$.}
However, the flow $X$ of $b$ is not absolutely continuous.
Indeed, for $t\neq0$, the function $X(t,0,\cdot)$ maps the Cantor set to a set of positive measure: with the notation of \cite[page~141]{MR3984100}, if $C\subset\R$ is the Cantor set, then $\Lbm^1(X(t,0,C)) = \Lbm^1(C_t) = t^2/2$.


\subsection{Sub-exponential condition does not imply high Sobolev regularity}\label{sec5eb962e5}
Here we show that
{\it there is a vector field $b\in C^0(\R)$
that satisfies 
\eqref{eq5eb95263}, but 
the flow of $b$ is no better that $W^{1,1}_\loc(\R)$.}
The same example allows us to show that 
{\it 
the upper bound on $\beta$ in 
Proposition~\ref{propFEL}
is necessary for the well-posedness}. 

Given $\alpha\ge1$, let $b:\R\to\R$ be the autonomous vector field on $\R$ given by
\[
b(x) =
\begin{cases}
0&\text{ if }x\le 0\text{ or }x\ge e^{-e},\\
\beta x\log\frac{1}{x}(\log\log\frac{1}{x}-1)^\alpha  &\text{ if }0< x< e^{-e}.
\end{cases} 
\]
We clearly have that
$b\in C^0(\R)\cap W^{1,p}_\loc(\R)$ for all $p\in [1,\infty)$,
 and that $\spt(b) = [0,e^{-e}]$.
Moreover, 
\[
D_xb(x)=
\begin{cases}
0&\text{ if }x< 0\text{ or }x> e^{-e},\\
\beta \log\frac{1}{x}(\log\log\frac{1}{x})^\alpha (1+R(x)) &\text{ if }0< x< e^{-e},
\end{cases} 
\]
where $R$ is a remainder satisfying $\lim_{x\to 0^+}R(x)= 0$.
It is easy to see that
\begin{equation}\label{subexpintdert}
\exp\left(\frac{\left|D_xb\right|}{\left(\log^+\left|D_xb\right|\right)^\alpha}\right)\in \Lbs^1_\loc(\R) .
\end{equation}

Let us consider now the case $\alpha=1$. 
Then $b$ satisfies~\eqref{eq5eb95263} and thus, by  Theorem~\ref{thm5ea9862b}, $b$ is well-posed and the unique flow $X$ of $b$ satisfies $X(t,s)\in W^{1,1}_\loc(\R)$ for every $t,s\in\R$.
In fact, one can explicitly compute $X$ by separation of variables:
for every $t,s \in\R$,
\[
X(t,s,x)= 
\begin{cases}
x&\text{ if }x\le 0,\\
\exp\left(-e \left(\frac{1}{e}\log\frac{1}{x}\right)^{k(t,s)}\right)&\text{ if }0< x< e^{-e},\\
x&\text{ if }x\ge e^{-e},
\end{cases} 
\]
where $k(t,s) = \exp(s-t)$.
The spatial derivative of $X$ for $0< x< e^{-e}$ is
\[
D_xX(t,s,x) = 
\exp\left(-e \left(\frac{1}{e}\log\frac{1}{x}\right)^{k(t,s)}\right) k(t,s)\exp\left(-k(t,s)+1\right)\left(\log\frac{1}{x}\right)^{k(t,s)-1}\frac{1}{x}.
\]
If we now choose $s= 0$ and $t>0$, then it is easy to see that, for every $p>1$,
\[
\int_0^{e^{-e}}|D_xX(t,0,x)|^p dx=+\infty ;
\]
in particular, $X(t,0,\cdot)\notin W^{1,p}_\loc(\R)$ whenever $p>1$. 
Notice also that  $X(t,0,\cdot)\notin C^{0,\gamma}_\loc(\R_x)$ for all $\gamma\in (0,1)$.

In the other case, when $\alpha> 1$, the vector field $b$ still satisfies~\eqref{subexpintdert} and~\eqref{eq5ea991ab}
with $\Theta$ of the form as in~\eqref{sub2expofunct} (but with $\beta>1$).
However, $b$ is not well-posed any more.
For instance, it is easy to see that both functions $\gamma_1\equiv 0$ and
\[
\gamma_2(t)=
\begin{cases}0&\text{if }t\le 0,\\
\displaystyle{\exp\left(-\exp\left(-\left(1+\frac{1}{(\alpha-1)t}\right)^{\frac{1}{\alpha-1}}\right)\right)}&\text{if }t> 0,
\end{cases}
\]
satisfy $\gamma'=b(\gamma)$ and $\gamma(0)=0$.


\subsection{Exponential summability  does not imply the divergence in \texorpdfstring{$BMO$}{BMO} and it is only sufficient  for the Sobolev regularity}\label{sec5ec2a1ee}

Here we show that {\it there is a vector field $b$ satisfying the assumptions of Theorem~\ref{thmbcpcsupp}  such that $\div_x b\notin BMO(\R)$.} 
On the other hand {\it there exist $\ell>0$ and  an exponent $p\in (1,\infty)$ such that, for each $t,\,s\in [-\ell/2,\ell/2]$, 
$X(t,s,\cdot)\in W^{1,p}_{\rm loc}(\R)$, 
even though $\exp(\beta |D_xb|)\notin L^1_{\rm loc}(\R)$ if $\beta:=\,\frac{\ell p^2}{p-1} $}. 

Let $b:\R\to\R$ be the autonomous vector field on $\R$ given by
\[
b(x) =
\begin{cases}
0 & \text{ if }x\le 0\text{ or }x\ge e , \\
x \log\displaystyle{\frac{e}{x}}\, &\text{ if }0<x<e .
\end{cases}
\]
It is easy to see that $b\in C^0(\R)\cap W^{1,p}_\loc(\R)$ for every $p\in [1,\infty)$, and that $\spt(b) =\R\times[0,e]$. 
Moreover, 
\[
D_xb(x) =
\begin{cases}
0&\text{ if }x < 0\text{ or } x > e , \\
\log\frac{1}{x}&\text{ if } 0 < x < e .
\end{cases} 
\]
Let $I:=\, (-\ell/2,\ell/2)$ and $\Omega:=\,(-1,3)$. Observe that, in this case, condition~\eqref{eq5eb965b3} amounts to 
\begin{equation}\label{expintdertex}
\beta:=\,\frac{\ell\,p^2}{p-1} < 1 .
\end{equation}
It is also easy to check (see \cite[Example 7.1.4]{ MR3243741}) that
\[
D_xb(\cdot)\notin BMO(\R)\,.
\]
Using~\eqref{expintdertex} and Theorem~\ref{thmbcpcsupp}, one easily proves that $b:I\times\R\to\R$ is well-posed.
In fact, we can integrate $b$ by separation of variables and obtain
\begin{equation*}
X(t,s,x)=
\begin{cases}
x&\text{ if }x\le 0 , \\
e\left(\frac{x}{e}\right)^{k(t,s)} &\text{ if }0<x< e , \\
x&\text{ if }x\ge e ,
\end{cases} 
\end{equation*}
where $k(t,s)=\exp(s-t)$.
Moreover
\[
D_xX(t,s,x) =
	\begin{cases}
	1 & \text{ if }x < 0 , \\
	k(t,s)\left(\frac{x}{e}\right)^{{k(t,s)-1}} 	&\text{ if }0<x< e , \\
	1 & \text{ if }x > e .
	\end{cases} 
\]
It follows that $X(t,s,\cdot)\in W^{1,q}_\loc(\R)$ if and only if 
\begin{equation}\label{sharpsobregex}
(\exp(s-t)-1)q>-1 ,
\quad\text{i.e.,}\quad
\begin{cases}
q<\frac{1}{1-\exp(s-t)} &\text{ if }s<t ,\\
q\ge1  &\text{ if }s>t .
\end{cases}
\end{equation}
This example shows how the Sobolev regularity of $X(t,0,\cdot)$ can deteriorate with time.
It also shows that condition~\eqref{eq5eb965b3} is only sufficient: 
indeed, if $\ell\ge1/4$ then~\eqref{expintdertex} is not satisfied by any $p$, but $X(t,s,\cdot)\in W^{1,p}_{\rm loc}(\R)$, for each $t,\,s\in [-\ell/2,\ell/2]$,  if $1<\,p<\, \frac{1}{1-\exp(-\ell)}$. Thus Theorem~\ref{thmbcpcsupp} does not apply. Notice also that, by~\eqref{sharpsobregex}, if $t>\,0$, $X(t,0,\cdot)\notin W^{1,q}_{\rm loc}(\R)$   for any $q\ge\,\frac{1}{1-\exp(-t)}$.

\printbibliography

\end{document}